\newcommand{\bqa}{\begin{equation}}
\newcommand{\eqa}{\end{equation}}
\newcommand{\bea}{\begin{eqnarray}}
\newcommand{\eea}{\end{eqnarray}}
\newcommand{\bna}{\begin{eqnarray*}}
\newcommand{\ena}{\end{eqnarray*}}
\newcommand{\bma}{\begin{pmatrix}}
\newcommand{\ema}{\end{pmatrix}}
\newcommand{\mk}{\mathfrak}
\def\bz{{\mathbb Z}}
\def\br{{\mathbb R}}
\def\sl2z{SL(2,\bz)}
\def\psl2z{PSL(2,\bz)}
\def\gl2r{GL(2,\br)}
\def\C{\mathbb{C}}
\def\A{\mathbb{A}}
\def\R{\mathbb{R}}
\def\Q{\mathbb{Q}}
\def\Z{\mathbb{Z}}
\def\vol{\mathrm {Vol}}
\newtheorem{lemma}{Lemma}[section]
\newtheorem{thm}[lemma]{Theorem}
\newtheorem{cor}{Corollary}
\newtheorem{prop}[lemma]{Proposition}
\theoremstyle{definition}
\newcommand{\bit}{\begin{itemize}}
\newcommand{\eit}{\end{itemize}}
\begin{document}
\centerline{\Large\bf Central Values of Twisted Base Change
$L$-functions} \centerline{\Large \bf associated to Hilbert Modular
Forms}

\bigskip
\centerline{\large Qinghua Pi}
\address{School of Mathematics and Statistics, Shandong University, Weihai, Weihai 264209, China}
\email{qhpi@sdu.edu.cn}
\numberwithin{equation}{section}

\bigskip

\noindent{\bf Abstract} \, We use the relative trace formula to prove a non-vanishing
result and a subconvexity result for the twisted base change $L$-functions associated
to Hilbert modular forms whose local components at ramified places are some supercuspidal representations.
This generalizes the work of Feigon and Whitehouse in \cite{FW}.
\medskip

\noindent{\bf Key Words} \, relative trace formula, supercuspidal representation, Hilbert modular forms\\
\noindent{\bf MSC2010} 11F72, 11F67

\section{Introduction}

Let $F$ be a totally real number field with $\Sigma_\infty$ the set of Archimedean places of $F$.
For $1\leq i\leq 3$,
let $\mk N_i$ be co-prime square-free ideals of the integer ring $\mk o_F$ with $\Sigma_i=\{v<\infty, v\mid\mk N_i\}$.
We assume the  absolute norm $|\mk N_2\mk N_3|$ is not divided by $2$, and
\bea
\#\left(\Sigma_\infty\cup\Sigma_{1}\cup\Sigma_2\right)\equiv 0\bmod 2.\label{condition-1}
\eea
Denote by $\mk N=\mk N_1\mk N_2^2\mk N_3^3$.
Given a tuple of positive integers
${\bf k}=(k_v,v\in\Sigma_\infty)$,
let $\mathcal F^{\mathrm{new}}(2{\bf k},\mk N)$ be the set of automorphic cuspidal
representations of $PGL_2(\A_F)$ which
 are of exact level $\mk N$ and holomorphic
of weight $2{\bf k}$.

Let $E/F$ be a quadratic CM-extension such that $v\in\Sigma_{1}\cup\Sigma_2$
are unramified and inertia in $E$, and $v\in\Sigma_3$ are ramified in $E$.
For $\Omega$ an idele class character of $\A_E^\times$ trivial on $\A_F^\times$,
we consider the central values of
\bna
L(s,\pi_E\otimes\Omega)=L(s,\pi\times\sigma_\Omega),
\ena
where $\pi_E$ is the base change of $\pi$ to an automorphic representation of
$PGL_2(\A_E)$,
and $\sigma_\Omega$ is the induction of $\Omega$ to an automorphic representation of
$GL_2(\A_F)$. For the case $\Sigma_{2}=\Sigma_3=\emptyset$ and $\Omega$ being unramified
at places of $E$ above $\mk N_1$,
by the relative trace formula and period formulas,
Feigon and Whitehouse (\cite{FW}) obtained a formula for
\bna
\sum_{\pi\in\mathcal F^{\mathrm{new}}(2{\bf k},\mk N_1)}
\frac{L(1/2,\pi_E\otimes\Omega)}{L(1,\pi,\mathrm{Ad})},
\ena
and proved a subconvexity bound for $L_{\mathrm{fin}}(1/2,\pi\times\sigma_\Omega)$.
In this paper, we generalize
their results to those $\pi\in\mathcal F^{\mathrm{new}}(2{\bf k},\mk N)$
whose local components $\pi_v$ are some supercuspidal representations of
$PGL_2(F_v)$ at $v\in\Sigma_2\cup\Sigma_3$.
By Bushnell and Henniart (\cite{BuHe}), such supercuspidal representations
are characterized by equivalent classes of
admissible pairs $(E_v/F_v,\theta_v)$.

To state our results, we make the following assumptions.
\bit
\item Assume $\Sigma_2\cup\Sigma_3\neq\emptyset$. For each $v\in \Sigma_{2}\cup\Sigma_3$, let
\bna
\pi_v\leftrightarrow (E_v/F_v,\theta_v)
\ena
be supercuspidal representation of $PGL_2(F_v)$ corresponding to given admissible pair $(E_v/F_v,\theta_v)$.
We refer sections \ref{sec-sigma-2} and \ref{sec-sigma-3}
for details.
\item
Given a tuple of equivalent classes of admissible pairs $\Theta=((E_v/F_v,\theta_v), v\in\Sigma_{2}\cup\Sigma_3)$,
let
\bna
\mathcal F^{\mathrm{new}}(2{\bf k},\mk N,\Theta)
=\{\pi\in\mathcal F^{\mathrm{new}}(2{\bf},\mk N),\quad \pi_v\leftrightarrow (E_v/F_v,\theta_v),v\in\Sigma_2\cup\Sigma_3\}
\ena
be the set of automorphic cuspidal representations
of $PGL_2(\A_F)$ which are of level $\mk N=\mk N_1\mk N_2^2\mk N_3^3$
and weight $2{\bf k}$, and whose local components are characterized by $(E_v/F_v,\theta_v)$
at $v\in\Sigma_{2}\cup\Sigma_3$.
\item Let $\Omega$ be an idele class character of $\A_E^\times$ trivial on $\A_F^\times$.
We assume that
$\Omega_v$ has weight $m_v$ with $|m_v|<k_v$ for $v\in\Sigma_{\infty}$,
$\Omega_v$ is unramified for $v\in\Sigma_{1}$
and
$\Omega_v=\theta_v$
for $v\in\Sigma_{2}\cup\Sigma_3$.
Let $S(\Omega)$ be the set of places of $F$ above which $\Omega$ is ramified, and let
\bna
S'(\Omega)=S(\Omega)-\Sigma_{2}-\Sigma_3.
\ena
\eit

\begin{thm}\label{thm-non-vanihsing}
Let $\Delta_F$ be the discriminant of $F$, $h_F$ the class number of $F$, $d_{E/F}$ the absolute norm of relative discriminant $\mk d_{E/F}$ of $E/F$,
and $c(\Omega)$ the conductor of $\Omega$.
If either
\bna
|\mk N_1\mk N_2^{1+2h_F}\mk N_3^{1+3h_F}|\geq d_{E/F}^{h_F}c(\Omega)^{h_F}
\ena
or
\bna
|\mk N_1\mk N_2^3\mk N_3^4|\geq d_{E/F}c(\Omega)\sqrt{|\Delta_F|},
\ena
we have
\bna
\bma 2{\bf k}-2\\ {\bf k+m}-1\ema\sum_{\pi\in\mathcal F^{\mathrm{new}}(2{\mathbf k},\mk N,\Theta)}\frac{L(1/2,\pi_E\otimes\Omega)}{L(1,\pi,\mathrm{Ad})}
=
\frac{|\mk N_1\mk N_2\mk N_3^2||\Delta_F|^{3/2}}{2^{\#\Sigma_3+[F:\Q]-2}}
L^{S'(\Omega)}(1,\eta).
\ena
Here  $\eta=\eta_{E/F}$ is the quadratic character
associated to $E/F$, and
\bna
\bma 2{\bf k}-2\\ {\bf k+m}-1\ema=\prod_{v\in\Sigma_\infty}
\bma 2 k_v-2\\  k_v+m_v-1\ema.
\ena
\end{thm}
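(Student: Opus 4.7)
The plan is to prove this formula via a relative trace formula of Jacquet type, combined with Waldspurger's period formula and the matrix-coefficient calculus for the supercuspidal representations attached to the admissible pairs $(E_v/F_v,\theta_v)$. The parity condition (\ref{condition-1}) guarantees the existence of a quaternion algebra $B/F$ ramified exactly at $\Sigma_\infty\cup\Sigma_1\cup\Sigma_2$ and split at $\Sigma_3$ and the unramified places, and since $E$ is CM and inert (resp.\ ramified) at places in $\Sigma_1\cup\Sigma_2$ (resp.\ $\Sigma_3$) we have $E\hookrightarrow B$. Waldspurger's formula then expresses $L(1/2,\pi_E\otimes\Omega)/L(1,\pi,\mathrm{Ad})$, for $\pi\in\mathcal F^{\mathrm{new}}(2{\bf k},\mk N,\Theta)$, as an explicit product of local constants times $|P_\Omega(\phi^B)|^2/\langle\phi^B,\phi^B\rangle$, where $\phi^B$ lives in the Jacquet--Langlands transfer $\pi^B$ and $P_\Omega$ is the $\Omega$-twisted toric period along $\A_E^\times/\A_F^\times$.

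Next I would set up a relative trace formula twisted by $\Omega\otimes\bar\Omega$, with test function $f=\prod_v f_v$ chosen as follows: at $v\in\Sigma_\infty$, a matrix coefficient of the weight-$2k_v$ holomorphic discrete series; at $v\in\Sigma_1$, the characteristic function associated with an Eichler order of level $\mk N_1$; at $v\in\Sigma_2\cup\Sigma_3$, a matrix coefficient of the supercuspidal $\pi_v\leftrightarrow(E_v/F_v,\theta_v)$ constructed in sections \ref{sec-sigma-2} and \ref{sec-sigma-3}; and elsewhere, the characteristic function of a maximal compact. The defining property of supercuspidal matrix coefficients (compact support modulo center, and the fact that they annihilate every component not isomorphic to the prescribed $\pi_v$) forces the spectral expansion to collapse onto $\mathcal F^{\mathrm{new}}(2{\bf k},\mk N,\Theta)$, and, after inserting Waldspurger's formula, reproduces the weighted sum on the left-hand side of the theorem with a calculable ratio of local normalizing constants. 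The archimedean factor $\bma 2{\bf k}-2\\ {\bf k+m}-1\ema$ will emerge from this local computation at $\Sigma_\infty$, as in Feigon--Whitehouse.

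The geometric side is where the explicit right-hand side appears. Relative orbits are indexed by an invariant in $F$, with an identity/degenerate orbit and a family of regular orbits coming from $E$. The identity orbital integral produces $|\mk N_1\mk N_2\mk N_3^2|\,|\Delta_F|^{3/2}/2^{\#\Sigma_3+[F:\Q]-2}$ from the product of local volumes with the chosen measures, together with $L^{S'(\Omega)}(1,\eta)$ coming from the Tamagawa measure on the norm-one torus and the volume of $E^\times\A_F^\times\backslash\A_E^\times$ (the superscript $S'(\Omega)$ reflecting that ramification at $\Sigma_2\cup\Sigma_3$ is absorbed into the local supercuspidal data). The two numerical hypotheses serve exactly to kill the regular orbital integrals: at $v\in\Sigma_2\cup\Sigma_3$ the supercuspidal matrix coefficient has small support, forcing the valuation of any regular invariant to lie in a narrow window, and either of the two size conditions $|\mk N_1\mk N_2^{1+2h_F}\mk N_3^{1+3h_F}|\geq d_{E/F}^{h_F}c(\Omega)^{h_F}$ or $|\mk N_1\mk N_2^3\mk N_3^4|\geq d_{E/F}c(\Omega)\sqrt{|\Delta_F|}$ then precludes the existence of a global $F^\times$-dilate meeting all local supports simultaneously, by a Bezout/class-number argument over $F$.

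The main obstacle will be the orbital integral computations at the ramified places $\Sigma_3$, where $E_v/F_v$ is ramified \emph{and} the supercuspidal matrix coefficient is singular; the pair of hypotheses in the theorem is precisely the precipitate of two different ways of bounding these integrals, which must be carried out case by case following the admissible-pair classification. A secondary difficulty is bookkeeping the normalizations: one must track the matrix-coefficient scalars at each place, the local epsilon-factor signs governing Waldspurger's formula, and the Tamagawa normalization of the toric measure, so that they assemble cleanly into $L^{S'(\Omega)}(1,\eta)$ times the displayed rational constant rather than some spurious multiple. Once these local calculations are verified, the identity comes down to equating the (now explicit) spectral and geometric sides of the relative trace formula under either of the stated numerical hypotheses.
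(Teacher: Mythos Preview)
Your strategy is correct and coincides with the paper's: a relative trace formula on the quaternion algebra ramified at $\Sigma_\infty\cup\Sigma_1\cup\Sigma_2$, with matrix-coefficient test functions at $\Sigma_\infty\cup\Sigma_2\cup\Sigma_3$, Jacquet--Chen's period factorization on the spectral side (Proposition~\ref{prop-spectral-side}), and the vanishing of regular orbital integrals under either size hypothesis via the class-number/Minkowski argument (Lemma~\ref{lemma-finiteness}) on the geometric side.

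Two small corrections to your test-function choices. First, at finite places $v\notin\Sigma_1\cup\Sigma_2\cup\Sigma_3$ you cannot simply take the maximal compact: where $\Omega_v$ is ramified the paper uses the characteristic function of $Z_vR_v^\times$ for the Gross--Prasad order $R_v$ satisfying $E_v\cap R_v=\mk o_v+\varpi_v^{n(\Omega_v)}\mk o_{E_v}$ (see (\ref{formula-maximal-order})), and this is precisely what produces the \emph{partial} $L$-value $L^{S'(\Omega)}(1,\eta)$. Note also that your reading of the superscript is inverted: $S'(\Omega)=S(\Omega)\setminus(\Sigma_2\cup\Sigma_3)$ records the places where $\Omega$ is ramified \emph{outside} the supercuspidal locus, not absorption into it. Second, at $v\in\Sigma_1$ the local algebra $D_v$ is a division algebra, so there is no Eichler order of level $\mk N_1$; one simply takes $f_v'=1_{Z_vR_v^\times}$ for the unique maximal order $R_v$, and the level condition is automatic since $\pi_v'$ is one-dimensional. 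Finally, the paper locates the main local work not in the $\Sigma_3$ orbital integrals (which are handled quickly in Lemma~\ref{lemma-1}) but in the spectral-side computations of $J_{\pi_v'}(f_v')$ and the associated $L$-factors at $v\in\Sigma_2\cup\Sigma_3$ (Sections~\ref{sec-sigma-2}--\ref{sec-sigma-3}).
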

The above result is a generalization of Theorem 1.1 in \cite{FW}. An interesting case is that
\bna
\#(\Sigma_2\cup\Sigma_3)=1,
\ena
where we have  $L(1/2,\pi_E\otimes\Omega)=0$ for those
$\pi\in \mathcal F^{\mathrm{new}}(2{\bf k},\mk N)-\mathcal F^{\mathrm{new}}(2{\bf k},\mk N,\Theta)$.
We remark that for $\Sigma_3=\emptyset$, a much wider result has been obtained
by File, Martin and Pitale in \cite{FMP}, where they determined the local test vectors
for Waldspurger functionals for $GL_2$.

Assume $\Sigma_3=\emptyset$.
Let $\sigma_\Omega$ be automorphic representation of $GL_2(\A_F)$
obtained by automorphic induction of $\Omega$.
We also have the following result,
which generalizes Theorem 1.4 in \cite{FW}.
\begin{thm}\label{thm-subconviexity-0}
Assume $\Sigma_3=\emptyset$.
Let
\bea
c'(\Omega)=\prod_{v<\infty\atop{v\notin\Sigma_2}}c(\Omega_v)=\frac{c(\Omega)}{|\mk N_2|^2}.\label{c-prime-Omega}
\eea
Then for any $\epsilon>0$,
\bna
L_{\mathrm{fin}}(1/2,\pi\times\sigma_{\Omega})\ll_{E,F,{\bf k},\epsilon} |\mk N_1\mk N_2|^{1+\epsilon} c'(\Omega)^{\epsilon}
+|\mk N_1\mk N_2|^{\epsilon} c'(\Omega)^{1/2+\epsilon}
\ena
for all $\pi\in\mathcal F^{\mathrm{new}}(2{\bf k},\mk N,\Theta)$.
\end{thm}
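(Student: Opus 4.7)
The plan is to combine the first moment formula of Theorem~\ref{thm-non-vanihsing} with the amplification method of Duke--Friedlander--Iwaniec, in the same spirit as \cite{FW}. Two positivity facts are crucial: $L(1/2,\pi_E\otimes\Omega)\geq 0$ (by Waldspurger, noting that the global root number is $+1$ under our running assumptions) and $L(1,\pi,\mathrm{Ad})>0$. Together these let us bound a single central value $L(1/2,\pi_{0,E}\otimes\Omega)$ from above by a nonnegatively weighted spectral sum over $\pi\in\ml F^{\mathrm{new}}(2{\bf k},\mk N,\Theta)$.

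First I would upgrade Theorem~\ref{thm-non-vanihsing} to a twisted first moment by inserting the Hecke operator $T_{\mk l}$ at an integral ideal $\mk l$ prime to $\mk N\mk d_{E/F}c'(\Omega)$. On the spectral side this produces the weight $\lambda_\pi(\mk l)$; on the geometric side it modifies the identity orbital integral by a standard Hecke kernel while leaving the structure of the remaining orbital integrals essentially unchanged. The resulting formula has the shape
\bna
\sum_{\pi\in\ml F^{\mathrm{new}}(2{\bf k},\mk N,\Theta)}\lambda_\pi(\mk l)\frac{L(1/2,\pi_E\otimes\Omega)}{L(1,\pi,\mathrm{Ad})}=M(\mk l)+\ml E(\mk l),
\ena
in which the main term $M(\mk l)$ is supported on perfect squares and of size $|\mk N_1\mk N_2|^{1+\epsilon}$ (inherited from Theorem~\ref{thm-non-vanihsing}), while $\ml E(\mk l)$ is a sum of weighted Kloosterman-type orbital integrals indexed by the nontrivial double cosets of the relative trace formula.

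Next I would build the standard amplifier
\bna
A(\pi)=\sum_{|\mk p|\sim L}x_{\mk p}\lambda_\pi(\mk p),\qquad x_{\mk p}=\mathrm{sgn}\bigl(\lambda_{\pi_0}(\mk p)\bigr),
\ena
over prime ideals $\mk p$ of $F$ coprime to $\mk N\mk d_{E/F}c(\Omega)$, so that $|A(\pi_0)|\gg L(\log L)^{-1}$ while $|A(\pi)|^2$ expands, via Hecke multiplication, into a linear combination of $\lambda_\pi(\mk l)$ with $|\mk l|\ll L^2$. Squaring, applying positivity, and inserting the twisted moment formula yields
\bna
L(1/2,\pi_{0,E}\otimes\Omega)\ll L(1,\pi_0,\mathrm{Ad})\cdot L^{-2}\left(L\cdot|\mk N_1\mk N_2|^{1+\epsilon}+\sum_{|\mk l|\ll L^2}|\ml E(\mk l)|\right).
\ena
Taking $L$ to be a fixed absolute constant gives the first bound $|\mk N_1\mk N_2|^{1+\epsilon}c'(\Omega)^\epsilon$, while optimizing $L$ against the off-diagonal contribution produces the second bound $|\mk N_1\mk N_2|^\epsilon c'(\Omega)^{1/2+\epsilon}$.

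The main obstacle is estimating $\ml E(\mk l)$ in the presence of the supercuspidal test functions at the places $v\in\Sigma_2$. At places in $\Sigma_1$ the local analysis of \cite{FW} applies verbatim, and the archimedean and unramified places are standard; but at each $v\in\Sigma_2$ the local test function is a matrix coefficient of the supercuspidal $\pi_v\leftrightarrow(E_v/F_v,\theta_v)$, and one must compute its local orbital integral explicitly, determine its support, and extract the sharp dependence on $|\mk N_2|$. Once this supercuspidal analogue of the local Kloosterman estimates of \cite{FW} is in hand, the amplification argument assembles into the claimed bound; this is the essential new input needed to extend Theorem~1.4 of \cite{FW} to the setting of Theorem~\ref{thm-subconviexity-0}.
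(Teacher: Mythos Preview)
Your proposal would eventually reach the goal, but it takes a substantially more complicated route than the paper and than \cite{FW}. Neither uses amplification. The paper simply applies the \emph{untwisted} relative trace formula (the same $f'$ as in Theorem~\ref{thm-non-vanihsing}) and reads off the theorem directly from positivity: on the spectral side Proposition~\ref{prop-spectral-side} gives
\[
J(f')\gg_{E,F,\mathbf k,\epsilon}\frac{L_{S'(\Omega)}(1,\eta)^2}{\sqrt{c'(\Omega)}\,|\mk N_1\mk N_2|}\sum_{\pi}\frac{L_{\mathrm{fin}}(1/2,\pi\times\sigma_\Omega)}{L(1,\pi,\mathrm{Ad})},
\]
while on the geometric side the singular term contributes $\ll c'(\Omega)^{-1/2+\epsilon}$ and the regular orbital integrals are bounded by $\ll c'(\Omega)^\epsilon/|\mk N_1\mk N_2|$. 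Multiplying through and invoking $L(1/2,\pi_E\otimes\Omega)\ge 0$, $L(1,\pi,\mathrm{Ad})\gg |\mk N_1\mk N_2|^{-\epsilon}$ already yields the stated bound; the two terms in the theorem are the singular and regular contributions to the \emph{same} geometric side, not the outputs of two different amplifier lengths.

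In other words, the bound claimed in Theorem~\ref{thm-subconviexity-0} is exactly what the raw first moment gives, so inserting Hecke operators and building an amplifier buys nothing here (indeed your ``$L$ a fixed absolute constant'' case \emph{is} the paper's argument). The genuine work, which you do identify correctly, is the estimation of the regular orbital integrals with the supercuspidal matrix-coefficient test functions at $v\in\Sigma_2$: the paper handles this in Lemma~\ref{lemma-1} (showing $I(\xi,f_v')$ is supported on $v(\xi)\ge 1$ and equals $\vol(F_v^\times\backslash E_v^\times)^2$ there), and then runs the counting argument of \cite{FW}, Lemma~6.7, essentially unchanged to get $I_{\mathrm{reg}}(f')\ll c'(\Omega)^\epsilon/|\mk N_1\mk N_2|$. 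So your diagnosis of the new local input is right, but the surrounding global machinery should be stripped down to the plain first moment.
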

For $\Sigma_3=\emptyset$, the convexity bound of $L_{\mathrm{fin}}(1/2,\pi\times\sigma_\Omega)$ is (see (\ref{formula-convexity-bound}))
\bna
L_{\mathrm{fin}}(1/2,\pi\times\sigma_{\Omega})\ll_{\mathbf{k},\epsilon}(|\mk N_1\mk N_2|d_{E/F}c'(\Omega))^{\frac{1}{2}+\epsilon}.
\ena
We have the following corollary which beats the convexity bound.
\begin{cor}Assume $\Sigma_3=\emptyset$.
For $0\leq t<1/6$ and $\epsilon>0$, we have
\bna
L_{\mathrm{fin}}(1/2,\pi\times\sigma_\Omega)\ll_{E,F,{\bf k},\epsilon}(c'(\Omega)|\mk N_1\mk N_2|)^{\frac{1}{2}-t}
\ena
where $\pi\in\mathcal F^{\mathrm{new}}(2{\bf k},\mk N,\Theta)$ and $\mk N_1\mk N_2$ satisfies
\bna
c'(\Omega)^{\frac{2t+2\epsilon}{1-2t-2\epsilon}}\ll|\mk N_1\mk N_2|
\ll c'(\Omega)^{\frac{1-2t-2\epsilon}{1+2t+2\epsilon}}.
\ena
\end{cor}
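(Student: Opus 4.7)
The plan is to apply Theorem \ref{thm-subconviexity-0} directly and then optimize the balance between the two terms appearing on the right-hand side. Write $N=|\mk N_1\mk N_2|$ and $C=c'(\Omega)$ for brevity. We want to ensure both
\[
N^{1+\epsilon}C^{\epsilon}\ll (NC)^{\tfrac{1}{2}-t}\qquad\text{and}\qquad N^{\epsilon}C^{\tfrac{1}{2}+\epsilon}\ll (NC)^{\tfrac{1}{2}-t},
\]
since Theorem \ref{thm-subconviexity-0} gives $L_{\mathrm{fin}}(1/2,\pi\times\sigma_\Omega)\ll_{E,F,{\bf k},\epsilon} N^{1+\epsilon}C^{\epsilon}+N^{\epsilon}C^{1/2+\epsilon}$.

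The first inequality is equivalent to $N^{1/2+t+\epsilon}\ll C^{1/2-t-\epsilon}$, which rearranges as an upper bound $N\ll C^{(1-2t-2\epsilon)/(1+2t+2\epsilon)}$; the second is equivalent to $C^{t+\epsilon}\ll N^{1/2-t-\epsilon}$, which rearranges as a lower bound $N\gg C^{(2t+2\epsilon)/(1-2t-2\epsilon)}$. Together these are exactly the hypothesis on $|\mk N_1\mk N_2|$ in the statement of the corollary, so substituting both bounds into Theorem \ref{thm-subconviexity-0} yields the claimed subconvex estimate.

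The only thing to verify is that the admissible window for $N$ is non-empty for the range $0\leq t<1/6$. Setting $u=2t+2\epsilon$, the compatibility of the two exponents reduces to $u(1+u)<(1-u)^2$, i.e.\ $3u<1$, i.e.\ $t+\epsilon<1/6$; choosing $\epsilon$ small enough relative to the gap $1/6-t$ produces a non-trivial range, confirming that the corollary is non-vacuous precisely in the stated regime. No step is really an obstacle here, since the corollary is a routine optimization of the main theorem; the only subtlety is keeping track of the $\epsilon$-factors so that they appear symmetrically in both bounds on $|\mk N_1\mk N_2|$.
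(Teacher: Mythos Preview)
Your argument is correct and is exactly the routine optimization the paper has in mind; the paper does not spell out a proof of this corollary, since it follows immediately from Theorem~\ref{thm-subconviexity-0} by balancing the two terms, precisely as you have done. Your verification that the window is non-empty iff $t+\epsilon<1/6$ is also the right check, and nothing further is needed.
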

Note that the best possible bound appears in the case
 $|\mk N_1\mk N_2|\sim c'(\Omega)^{1/2-\epsilon}$,
where we have
\bna
L_{\mathrm{fin}}(1/2,\pi\times\sigma_\Omega)\ll_{E,F,{\bf k},\epsilon}(c'(\Omega)|\mk N_1\mk N_2|)^{\frac{1}{2}-\frac{1}{6}+\epsilon}.
\ena
We remark that by modifying the global argument in \cite {FW},
one can also obtain other results, such as
explicit formulas in some special cases and equidistribution results on Hecke eigenvalues.
Finally, we refer \cite{KnLi},
\cite{SuTsu1} and \cite{SuTsu2} on other results on applications of the relative trace formula
on Hilbert modular forms.

\medskip

This paper is arranged as follows. In Section \ref{sec-2} we give the notation
and the ideal of the relative trace formula
used by Feigen and Whitehouse in \cite{FW} (or see Jacquet and Chen in \cite{JC}).
In Sections \ref{sec-sigma-2} and \ref{sec-sigma-3}, we give the classification of admissible pairs, construct the associated representations
 for $v\in\Sigma_2$ and $v\in\Sigma_3$ and calculate the local integrals on spectral side.
 The global result on spectral side is obtained in Section \ref{sec-spectral-side}, where
 we use results proved by Martin and Whitehouse in \cite{MW} for $v\notin\Sigma_{2}\cup\Sigma_3$.
The local orbital integrals are computed in Section \ref{sec-local-orbital-integrals}.
In Section \ref{sec-global-orbital-integrals},
we will compute the geometric side of the relative trace formula and prove
Theorems \ref{thm-non-vanihsing} and \ref{thm-subconviexity-0}.

\section{Notation and preliminaries}\label{sec-2}

Let $F$ be a totally real number field
with discriminant $\Delta_F$ and class number $h_F$.
Let $\Sigma_\infty$ be the set of Archimedean places of $F$.
For $1\leq i\leq 3$,
let $\mk N_i$ be co-prime square-free ideals of $\mk o_F$ with $\Sigma_i=\{v<\infty, v\mid\mk N_i\}$.
We assume the  absolute norm $|\mk N_2\mk N_3|$ is not divided by $2$, and
\bna
\#\left(\Sigma_\infty\cup\Sigma_{1}\cup\Sigma_2\right)\equiv 0\bmod 2.
\ena
For $v$ a finite place of $F$,
let $\mk o_{v}$ be the ring of integers in $F_v$,
$U_{v}=\mk o_{v}^\times$ the set of inverse element in $\mk o_{v}$,
$\varpi_{v}$ a uniformizer in $F_v$, and $q_v$
the cardinality of the residue field $k_{F_v}$. For $n\geq 1$, we let
$U_{v}^{n}=1+\varpi_{v}^n\mk o_{v}$.

Let $E/F$ be a quadratic $CM$-extension
with $\eta=\eta_{E/F}$ the associated quadratic character.
We assume that $v\in\Sigma_{1}\cup\Sigma_2$ are unramified inertia in $E$,
and $v\in\Sigma_3$ are ramified in $E$.
Let  $\mk D_{E/F}$ and $\mk d_{E/F}$ be  the different and discriminant of $E/F$, respectively.
We denote $d_{E/F}=|\mk d_{E/F}|$. Moreover, for $v$ a finite place of $F$,
let $E_v=F_v\otimes E$.
Let $\mk o_{E_v}$, $\varpi_{E_v}$, $k_{E_v}$,
$U_{E_v}$ and $U_{E_v}^n$ be in their usual means.

Denote by $\mk N=\mk N_1\mk N_2^2\mk N_3^3$. For ${\bf k}=(k_v,v\in\Sigma_\infty)$
 a triple of positive integers,
let $\mathcal F^{\mathrm{new}}(2{\bf k},\mk N)$ be the set of automorphic cuspidal
representations of $PGL_2(\A_F)$ which are of exact level $\mk N$ and holomorphic
of weight $2{\bf k}$. Moreover, let\bna
\Theta=((E_v/F_v,\theta_v), v\in\Sigma_2\cup\Sigma_3),
\ena
where $(E_v/F_v,\theta_v)$ are admissible pairs
of normalized level $0$ (resp. normalized level $1/2$)
 for $v\in \Sigma_2$ (resp. $v\in\Sigma_3$).
Consider the set \bna \mathcal F^{\mathrm{new}}(2{\bf k},\mk
N,\Theta) =\{\pi\in\mathcal F^{\mathrm{new}}(2{\bf},\mk N),\quad
\pi_v\leftrightarrow (E_v/F_v,\theta_v),v\in\Sigma_2\cup\Sigma_3\},
\ena where $\pi_v\leftrightarrow (E_v/F_v,\theta_v)$ are
supercuspidal representations constructed by the given admissible
pairs for $v\in\Sigma_{2}\cup\Sigma_3$.
We refer Section
 18.2 in \cite{BuHe} for the definition of admissible pair and
 Sections
\ref{sec-sigma-2} and \ref{sec-sigma-3} for more details on
 the classification and construction.

\subsection{Jacquet-Langlands correspondence}
For $E/F$ as above, the condition (\ref{condition-1}) ensures that
 there exists a quaternion division algebra $D$ over $F$ such that the
 set of ramified places of $D$ is
\bna
\mathrm{Ram}(D)=\Sigma_\infty\cup\Sigma_{1}\cup\Sigma_{2}
\ena
and the set of $F$-points is
\bea
D(F)=\left\{\bma \alpha&\epsilon\beta\\\overline\beta&\overline\alpha\ema,\quad\alpha,\beta\in E\right\},\label{def-quaternion-algebra}
\eea
where $\epsilon\in F^\times$ is uniquely determined modulo $N_{E/F}(E^\times)$.
Let $Z$ be the center of $D^\times$ and $G'=D^\times/Z$.

By Jacquet-Langlands correspondence,
for each $\pi\in\mathcal F^{\mathrm{new}}({2\bf k},\mk N,\Theta)$,
there exists an automorphic representation $\pi'=\mathrm{JL}(\pi)$ on $G'(\A_F)$,
whose local components $\pi_v'$ are finite dimensional representations of $G'(F_v)$
at $v\in \mathrm{Ram}(D)$, and $\pi_v'\simeq \pi_v$ at $v\notin \mathrm{Ram}(D)$. More precisely,
\bit
\item for $v\in\Sigma_\infty$, $\pi_v'=\mathrm{JL}(\pi_v)$ is a  $2k_v-1$ dimensional
representation of $G'(F_v)$;
\item for $v\in\Sigma_1$, $\pi_v'=\mathrm{JL}(\pi_v)$ is an one dimensional representation of $G'(F_v)$;
\item for $v\in\Sigma_2$, $\pi_v'=\mathrm{JL}(\pi_v)$ is a two dimensional representation of $G'(F_v)$,
\item for $v\notin \mathrm{Ram}(D)$, $G'(F_v)\simeq PGL_2(F_v)$ and $\pi_v'\simeq \pi_v$.
\eit
Set
\bna
\mathcal F^{'}({2\bf k},\mk N,\Theta)=\{\pi'=\mathrm{JL}(\pi), \quad\pi\in\mathcal F^{\mathrm{new}}({2\bf k},\mk N,\Theta)\}.
\ena
It is a finite subset of $\mathcal A(G')$, the set of automorphic representations on $G'(\A_F)$.

\subsection{The distinguished argument}
For $E$ and $D$ as above,  $E^\times$ is embedded in $D^\times(F)$
as the set of $F$-points
of a torus $T$.
For a Hecke character $\Omega=\prod_v\Omega_v$ on $\A_E^\times$ trivial on $\A_F^\times$,
$\Omega$ can be viewed as a character on $T(\A_F)$.
We choose $\Omega$ such that
\bna
\dim \mathrm{Hom}_{E_v^\times}(\pi_v',\Omega_v)=1
\ena
for $\pi'=\otimes_v\pi_v'\in\mathcal F'(2{\bf k},\mk N,\Theta)$
for all $v$, i.e. $\pi_v'$ is distinguished by $(T_v,\Omega_v)$.
Such $\Omega$ is chosen as follows.
\bit
\item At $v\in \Sigma_\infty$,
$\Omega_v(z)=\left(\frac{z}{\overline z}\right)^{m_v}$
with $m_v$ an integer satisfying $|m_v|<k_v$.
\item At $v\in \Sigma_1$, $\Omega_v$ are unramified.
\item At $v\in \Sigma_{2}\cup\Sigma_3$,
$\Omega_v(z)=\theta_v(z)$,
where $\theta_v$ comes from the given admissible pair $(E_v/F_v,\theta_v)$.
\item At $v\notin \mathrm{Ram}(D)\cup\Sigma_3$, $\Omega_v$ can be either ramified or unramified.
\eit
By the result of Waldspurger and Tunnell (see Proposition 1.1 in \cite{GP}),
the local root number of twisted base change $L$-function is
\bna
\epsilon(\pi_{E_v}\otimes\Omega_v)=\left\{
\begin{aligned}
&\eta_{E_v/F_v}(-1),\quad&& G_v'\simeq PGL_2(F_v),\\
&-\eta_{E_v/F_v}(-1),\quad&&\mbox{otherwise}
\end{aligned}
\right.
\ena
where $\pi_{E_v}$ is the local base change of $\pi_v$.
Thus the global root number is
\bna
\epsilon(\pi_E\otimes\Omega)=(-1)^{\#\mathrm{Ram}(D)}\prod_v\eta_{E_v/F_v}(-1)=1.
\ena

For $v<\infty$,
 let $n(\Omega_v)$ be the least integer such that
$\Omega_v$ is trivial on
\bna
\left(\mk o_{v}+\varpi_v^{n(\Omega_v)}\mk o_{E_v}\right)^\times.
\ena
Let $\mk c(\Omega_v)$ be the norm of the conductor of $\Omega_v$ in $F_v$,
and $c(\Omega_v)$ the absolute norm of $\mk c(\Omega_v)$.
Let $S(\Omega)$ be the set of finite places $v$ of $F$ above which $\Omega$
is ramified. We also denote
\bea
S'(\Omega)=S(\Omega)-\Sigma_{2}-\Sigma_{3}\label{S-prime-Omega}.
\eea

\subsection{Tamagawa measures}\label{subsec-local-tamagawa}

Fix $\psi_0$ a standard additive character on $\A_\Q/\Q$
and let $\psi=\psi_0\circ Tr_{F/\Q}$.
For $v$ a place of $F$, let $dx_v$  be the additive Haar measure on $F_v$
which is self dual with respect to $\psi_v$, and let
\bna
d^\times x_v=L(1,1_{F_v})\frac{dx_v}{|x_v|_v}
\ena
be the multiplicative Haar measure on $F_v^\times$ such that
\bna
\vol(\mk o_{v},dx_v)=\vol( U_{v},d^\times x_v).
\ena
Let $\psi_E=\psi\circ Tr_{E/F}$.
We choose Haar measures $dx_{w}$ on $E_w$ and $d^\times x_{w}$ on $E_w^\times$ in a similar way for $w$ a place of $E$.
This gives
\bna
\prod_{v<\infty}\vol(U_{v},dx_v^\times)=|\Delta_F|^{-1/2},\quad
\prod_{w<\infty}\vol(U_{E_w},dx_w^\times)=|\Delta_E|^{-1/2},
\ena
and for $v\in\Sigma_\infty$,
\bna
\vol(F_v^\times\backslash E_v^\times)=\vol(\R^\times\backslash \C^\times)=2.
\ena
Recall Section 2.1 in \cite{FW}.
For $D$ the quaternion algebra in (\ref{def-quaternion-algebra}) and $v$ a valuation of $F$,
the Tamagawa measure $dg_v$ on $D_v^\times=D^\times(F_v)$ is given by
\bna
dg_v=L(1,1_{F_v})|\epsilon|_v\frac{d\alpha_v d\beta_v}{|\alpha_v\overline\alpha_v-\epsilon\beta_v\overline\beta_v|_v},
\ena
which depends only on the choice of $\epsilon\bmod N(E^\times)$. With respect to the choice
of local Tamagawa measure, we have the following proposition (see section 2.1 in \cite{FW}).
\begin{prop}\label{prop-local-Tamagawa}
For $v<\infty$ and $D_v^\times\simeq GL_2(F_v)$, we have
\bna
\vol(GL_2(\mk o_{v}))=L(2,1_{F_v})^{-1}\vol(U_{v},d^\times x_v)^4.
\ena
\item For $v\in\Sigma_{1}\cup\Sigma_{2}$ and $R_v$ a maximal order of $D_v$,
\bna
\vol(R_v^\times)=L(2,1_{F_v})^{-1}\frac{1}{q_v-1}\vol(U_{v},d^\times x_v)^4.
\ena
\item For $v\in\infty$, we have
\bna
\vol(D_v^\times/Z_v)=4\pi^2 .
\ena

\end{prop}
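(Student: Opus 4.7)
The plan is to compute the three volumes separately by writing the Tamagawa measure in appropriate coordinates and integrating. For the first identity, when $v<\infty$ splits in $E$, I identify $E_v \simeq F_v \oplus F_v$ with conjugation swapping coordinates and write $\alpha = (\alpha_1, \alpha_2)$, $\beta = (\beta_1, \beta_2)$; the change of variables $a = \alpha_1$, $b = \epsilon\beta_1$, $c = \beta_2$, $d = \alpha_2$ identifies $D_v^\times$ with $GL_2(F_v)$, absorbs the $|\epsilon|_v$ factor via the Jacobian of $\beta_1 \mapsto b$, and makes the reduced norm equal to $\det = ad-bc$. On $GL_2(\mk o_v)$ (where $|\det|_v = 1$) the Tamagawa measure reduces to $L(1, 1_{F_v})$ times additive Lebesgue, and the surjection $M_2(\mk o_v) \twoheadrightarrow M_2(k_{F_v})$ together with $|GL_2(k_{F_v})| = (q_v^2-1)(q_v^2-q_v)$ gives the additive volume $(1-q_v^{-1})(1-q_v^{-2}) \vol(\mk o_v)^4$; the $L$-factors then combine via $L(s, 1_{F_v}) = (1-q_v^{-s})^{-1}$ and $\vol(\mk o_v) = \vol(U_v)$ to give $L(2, 1_{F_v})^{-1} \vol(U_v)^4$.

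For the second identity, $v \in \Sigma_1 \cup \Sigma_2$ is inert in $E$, so $E_v/F_v$ is unramified quadratic and the division algebra condition forces $\epsilon$ to have odd valuation; I normalize $\epsilon = \varpi_v$ so that $|\epsilon|_v = q_v^{-1}$. The maximal order $R_v$ corresponds to $(\alpha, \beta) \in \mk o_{E_v}^2$, and by comparing the even valuation of $N_{E_v/F_v}(\alpha)$ with the odd valuation of $\epsilon N_{E_v/F_v}(\beta)$, $R_v^\times$ corresponds to those $(\alpha, \beta)$ with $\alpha \in U_{E_v}$, on which $|N(g)|_v = 1$. The integral then factors as $L(1, 1_{F_v}) q_v^{-1} \vol(U_{E_v}) \vol(\mk o_{E_v})$; the unramified self-dual measure gives $\vol(\mk o_{E_v}) = \vol(\mk o_v)^2$ and $\vol(U_{E_v}) = (1-q_v^{-2})\vol(\mk o_v)^2$, whence the claimed formula follows.

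For the Archimedean identity, $D_v \simeq \bh$ with $\epsilon < 0$. The substitution $\tilde c = \sqrt{|\epsilon|}\, c$, $\tilde d = \sqrt{|\epsilon|}\, d$ absorbs $\epsilon$ and makes the reduced norm $\rho^2 = a^2+b^2+\tilde c^2+\tilde d^2$; combined with the self-dual measure $d\alpha = 2\, dx\, dy$ on $\C$ and $L(1, 1_{\R}) = 1$, the measure $dg_v$ becomes the bi-invariant Haar measure of the form $4\, d^\times \rho\, dV_{S^3}$ on $\bh^\times \simeq \R_{>0} \times S^3$, where $\vol(S^3) = 2\pi^2$ for the round sphere. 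The center $Z_v = \R^\times$ acts by $(\rho, u) \mapsto (|z|\rho,\, \mathrm{sgn}(z)\, u)$, so $D_v^\times/Z_v \simeq S^3/\{\pm 1\}$; applying the quotient integration formula $\int_G f\, dg = \int_{Z \backslash G} \int_Z f(zg)\, d^\times z\, d\dot g$ with a bump function such as $\mathbf{1}_{\{1 \leq \rho \leq e\}}$ computes the numerator as $4\cdot 1\cdot 2\pi^2 = 8\pi^2$ and the inner $Z_v$-integral as $2$, yielding $\vol(D_v^\times/Z_v) = 4\pi^2$. The main obstacle is the bookkeeping of normalization constants in this Archimedean case, where the factor $4$ from the self-dual measure on $\C$, the round volume $2\pi^2$ of $S^3$, and the halving coming from $S^3/\{\pm 1\}$ must align precisely to produce the exact constant $4\pi^2$.
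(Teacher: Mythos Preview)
The paper does not actually prove this proposition; it simply refers to Section~2.1 of \cite{FW}. Your direct computations are correct and fill in what the paper omits: the split case via the identification $D_v\simeq M_2(F_v)$ and the count $|GL_2(k_{F_v})|=q_v^4(1-q_v^{-1})(1-q_v^{-2})$, the inert division case via $R_v^\times\leftrightarrow U_{E_v}\times\mk o_{E_v}$ with $\vol(U_{E_v})=(1-q_v^{-2})\vol(\mk o_v)^2$, and the Hamiltonian case via polar coordinates on $\R^4$.

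One point worth flagging explicitly: your Archimedean step ``the measure $dg_v$ becomes $4\,d^\times\rho\,dV_{S^3}$'' only holds if the denominator in the Tamagawa measure is $|\mathrm{Nrd}(g)|_v^{2}$, not the single power $|\alpha\overline\alpha-\epsilon\beta\overline\beta|_v$ displayed in the paper. The squared form is the standard bi-invariant Haar measure on $D_v^\times$ (and is what \cite{FW} uses), so the paper's displayed formula is a typo; with a single power the measure is not right-invariant and the quotient volume would not even be well-defined. This discrepancy is invisible in your two non-Archimedean computations because you are integrating over unit groups where $|\mathrm{Nrd}|_v=1$, but it would be worth stating the correction when you carry out the real case.
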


We choose the global measures as follows.
On $\A_F^\times$, we choose Tammagawa measure $d^\times x=\prod_vd^\times x_v$,
which is the convergent product of the local Tamagawa measures.
Similarly we can define Tamagawa measure on $\A_E^\times$ and we have
\bna
\vol(\A_F^\times E^\times\backslash \A_E^\times)=2L(1,\eta).
\ena
The Tamagawa measure $dg$ on $GL(2,\A_F)$ is the convergent product
multiplying by the factor
\bna
L^{S_0}(2,1_F):=\prod_{v\notin S_0}L(2,1_{F_v}),
\ena
where $S_0$ is a set of places of $F$ out of which everything is unramified
(See page 48 in \cite{JC}).

\subsection{Relative trace formula}
\label{subsec-relative-Jacquet-chen}
We recall trace formula in compact quotient case firstly (see \cite{Ge}).
For $f'\in C_c^\infty(D^\times(\A_F),Z(\A_F))$,
let $R(f')$ act on $\phi\in L^2(G'(F)\backslash G'(\A_F))$ via
\bna
R(f')\phi(x)=\int_{G'(F)\backslash G'(\A_F)} K_{f'}(x,y) \phi(y)dy,
\ena
where $K_{f'}(x,y)$ is the kernel function given by
\bna
K_{f'}(x,y)=\sum_{\gamma\in G'(F)}f'(x\gamma y^{-1}).
\ena
We choose the test function $f'=\prod_vf_v'$ as follows.
\bit
\item
For $v\notin\Sigma_\infty\cup \Sigma_{2}\cup\Sigma_3$,
$\pi_v'$ are unramified representations of $G'_v$.
We choose  $f'_v$  to be Gross-Prasad type test function as in \cite{FW}.
More priecesly, by choosing a maximal order $R_v$ of $D_v$ such that
\bna
E_v\cap R_v=\mk o_{v}+\varpi_v^{n(\Omega_v)}\mk o_{E_v},
\ena
we take $f_v'=1_{Z_vR_v^\times}$.
Assume $\mk o_{E_v}=\mk o_{v}[\tau_v]$. Such maximal order is
(see page 370 in \cite{FW})
\bea
R_v=\left\{\bma\alpha&\epsilon_v\beta\\\overline\beta&\overline\alpha
\ema,
\quad
\begin{aligned}
&\alpha\in\frac{1}{(\overline\tau_v-\tau_v)\varpi_v^{n(\Omega_v)}}(\mk o_v+\varpi_v^{n(\Omega_v)}\mk o_{E_v})\\
&\alpha+\beta\in\mk o_{v}+\varpi_v^{n(\Omega_v)}\mk o_{E_v}
\end{aligned}
\right\}.\label{formula-maximal-order}
\eea

\item
 For $v\in\Sigma_{\infty}\cup\Sigma_2\cup\Sigma_3$, $\pi_v'$ are discrete series representations.
We choose $f_v'$ as
\bna
f_v'(g)=\overline{\langle\pi_v'(g)u_v,u_v\rangle},
\ena
where $u_v$ is a unit vector in
\bea
\pi_v'(\Omega_v)=\{u\in \pi_v',\quad \pi_v'(t)u=\Omega_v(t)u,\forall t\in E_v^\times\}.
\label{formula-distinguished-space}
\eea
\eit
By choosing $f'=\prod_vf_v'$ as above,
we have
\bea
K_{f'}(x,y)=\sum_{\pi'\in\mathcal F'(2{\bf k},\mk N,\Theta)}K_{f',\pi'}(x,y),
\label{tt-spectral-kernel}
\eea
where
$$K_{f',\pi'}(x,y)=\sum_{\phi\in\mathfrak B_{\pi'}}(R(f)\phi)(x)\overline{\phi(y)}$$
with $\mathfrak B_{\pi'}$ an orthonormal basis of $\pi'$.

Denote $[T]=T(F)Z(\A_F)\backslash T(\A_F)=E^\times\A_F^\times\backslash \A_E^\times$.
Consider the integral
\bna
J(f'):=\int_{[T]}\int_{[T]}K_{f'}(t_1,t_2)\Omega(t_1^{-1}t_2)dt_1dt_2.
\ena
By spectral decomposition of $K_{f'}(x,y)$ in (\ref{tt-spectral-kernel}), we have
$$J(f')=\sum_{\pi'\in\mathcal F'(2{\bf k},\mk N,\Theta)} J_{\pi'}(f'),$$
where
\bna
J_{\pi'}(f')=\sum_{\phi\in\mathcal B_{\pi'}}
\int_{[T]}(\pi'(f')\phi)(t)\Omega^{-1}(t)dt
\cdot
\overline{\int_{[T]}\phi(t)\Omega^{-1}(t)dt}.
\ena

In \cite{JC},
Jacquet and Chen gave the canonical decomposition of $J_{\pi'}(f')$ into local integrals, and
proved the non-negativity of $L(1/2,\pi_E\otimes\Omega)$.
We quote their decomposition in the following proposition (see Theorem 2 in \cite{JC}).
\begin{prop}[Jacquet and Chen]\label{prop-Jacquet-Chen}
Let $f'=\prod_vf_{v}'\in C_c^\infty(D^\times(\A_F),Z(\A_F))$
and $\pi'\in\mathcal A(G'(\A_F))$.
Assume $S_0$ is the set of places of $F$ such that out
of which everything is unramified, and $S$ is the set of places in $E$ above $S_0$.
We have
\bna
 J_{\pi'}(f')=\prod_{v\in S_0}J_{\pi'_{v}}(f_{v}')
\times\frac{1}{2}\left(\prod_{v\in S_0\atop{\mathrm{inertia}}}\epsilon(1,\eta_v,\psi_v)2L(0,\eta_v)\right)
\times\frac{L_{S_0}(1,\eta) L^S(1/2,\pi_E\otimes\Omega)}{L^{S_0}(1,\pi,\mathrm{Ad})},
\ena
where
\bna
J_{\pi'_v}(f'_{v})
=\sum_{W\in\mathscr W_{\pi'}}\int_{T(F_{v})/Z_v} \pi'_v(f_{v}')W(a) \Omega_{v}^{-1}(a)da
\overline{\int_{T(F_{v})/Z_v}W(a)\Omega_{v}^{-1}(a)da}
\ena
if $v$ is split in $E$,
and
\bna
J_{\pi'_v}(f'_{v})
=
\int_{G'_{v}} f'_{v}(g)\langle\pi'_v(g)u,u\rangle dg
\ena
if $v$ is inertia in $E$. Here $\mathscr W_{\pi'}$  is the Whittaker model of $\pi'$
 and
 $u$ is the unit vector in $\pi_v'(\Omega_v)$.
\end{prop}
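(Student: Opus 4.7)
The plan is to derive the proposition as a refined Waldspurger-type formula, breaking the toric period square $J_{\pi'}(f')$ into an Euler product of local integrals times the central $L$-value $L(1/2,\pi_E\otimes\Omega)$ normalized by the adjoint $L$-value. First, I would fix a decomposition $\pi'=\otimes_v\pi_v'$ compatible with the factorization of the test function $f'=\prod_v f_v'$, so that an orthonormal basis $\mathfrak B_{\pi'}$ decomposes as tensor products of local bases. Substituting this factorization into
\begin{equation*}
J_{\pi'}(f')=\sum_{\phi\in\mathcal B_{\pi'}}
\int_{[T]}(\pi'(f')\phi)(t)\Omega^{-1}(t)dt
\cdot
\overline{\int_{[T]}\phi(t)\Omega^{-1}(t)dt}
\end{equation*}
reduces the global quantity to a product of local bilinear forms, one per place of $F$, provided one knows that $\dim\mathrm{Hom}_{E_v^\times}(\pi_v',\Omega_v)\leq 1$ (Tunnell--Saito) and that the global toric period is nonzero exactly when each local period is; this local--global compatibility is the heart of the Waldspurger theorem. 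The combinatorial work is to express the global measure on $[T]=E^\times\A_F^\times\backslash\A_E^\times$ as a product of local Tamagawa measures times the correction factor built out of $L(0,\eta_v)$ and $\epsilon(1,\eta_v,\psi_v)$ at inertia places, and the factor $\vol(\A_F^\times E^\times\backslash\A_E^\times)=2L(1,\eta)$ accounts for the leading $\tfrac12$ and the $L_{S_0}(1,\eta)$ in the formula.

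Next, at places outside $S_0$ where everything is unramified I would replace the local period integrals by their explicit unramified evaluations: at a split $v$, the Hecke--Jacquet--Langlands zeta integral computes $L(1/2,\pi_{E_v}\otimes\Omega_v)/L(1,\pi_v,\mathrm{Ad})$, while at an inertia $v$ the standard Macdonald--Casselman--Shalika type calculation on the matrix coefficient $\langle\pi_v'(g)u,u\rangle$ against the character $\Omega_v$ yields the same local ratio (this is the local Ichino--Ikeda/Waldspurger computation specialized to $PGL_2$). Collecting these identities over $v\notin S_0$ produces the global $L^S(1/2,\pi_E\otimes\Omega)/L^{S_0}(1,\pi,\mathrm{Ad})$ and absorbs into it the extra $L^{S_0}(2,1_F)$ coming from the global Tamagawa measure on $GL_2$ recalled in Section~\ref{subsec-local-tamagawa}.

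Finally, for $v\in S_0$ the local period integral $J_{\pi_v'}(f_v')$ is left as is; in the split case it is the square of a Whittaker period, because the unramified choice of orthonormal basis at split places reduces to summing over the Kirillov model, and in the inertia case the period collapses to the matrix-coefficient integral appearing in the proposition by an elementary change of variables $g=t_1^{-1}t_2$ on $T\times T$. The main obstacle I expect is bookkeeping: matching the local and global Tamagawa normalizations so that the spurious factors of $L(2,1_{F_v})$, $|\Delta_F|$, $|\Delta_E|$, and $\vol(U_v)$ cancel cleanly and leave only the clean factor $\tfrac12\prod_{v\in S_0,\,\text{inertia}}\epsilon(1,\eta_v,\psi_v)\cdot 2L(0,\eta_v)$. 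Once that accounting is done and the unramified computation is invoked in the form of Martin--Whitehouse~\cite{MW}, the statement of the proposition follows. Since this is precisely the content of Theorem~2 in Jacquet--Chen~\cite{JC}, I would simply cite their work for the delicate measure comparison rather than reprove it from scratch.
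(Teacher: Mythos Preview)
The paper does not prove this proposition at all: it is stated as a quotation of Theorem~2 in Jacquet--Chen~\cite{JC}, with no argument given. Your proposal ultimately lands in the same place---you conclude by citing \cite{JC} for the measure comparison---so there is no discrepancy to flag. The sketch you provide (factorization via Tunnell--Saito multiplicity one, unramified evaluation outside $S_0$, and the split/inertia dichotomy for the local integrals) is a reasonable outline of what Jacquet and Chen actually do, and goes well beyond what the present paper supplies; but since the paper treats the proposition as a black-box citation, your final sentence alone already matches the paper's ``proof.''
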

Based on Jacquet and Chen's factorization, the spectral side of $J(f')$
follows from the explicit calculation of local integrals $J_{\pi'_v}(f_v')$ for each $v$.
We will calculate $J_{\pi_v'}(f_v')$ for $v\in\Sigma_2\cup\Sigma_3$ in Sections \ref{sec-sigma-2} and \ref{sec-sigma-3}.
These results together with the work of Martin and Whitehouse in \cite{MW} give
the formula on  spectral side of $J(f')$ in Proposition \ref{prop-spectral-side}
in section \ref{sec-spectral-side}.

Consider the geometric side of $J(f')$.
By Feigon and Whitehouse in \cite{FW} (or see Jacquet and Chen in \cite{JC}),
a set of representatives for $E^\times\backslash G'(F)/E^\times$ is
\bna
\left\{\bma 1&\\&1\ema,\bma&\epsilon\\1\ema\right\}\bigcup \left\{\bma 1&\epsilon x\\\overline x&1\ema,\quad x\in E^\times/E^1\right\}.
\ena
It follows that\bna
J(f')=\vol(\A_F^\times E^\times\backslash \A_E^\times) \left[I(0,f')+\delta(\Omega^2)I(\infty,f')\right]
+\sum_{\xi\in\epsilon N(E^\times)}I(\xi,f'),
\ena
where $I(0,f')$ and $I(\infty,f')$ are singular orbital integrals given by
\bea
I(0,f')&=&\int_{\A_F^\times\backslash\A_E^\times}f'(t)\Omega(t)dt,\label{I-0-f-prime}\\
\nonumber I(\infty,f')&=&\int_{\A_F^\times\backslash\A_E^\times}f'\left(t\bma &\epsilon\\1\ema\right)\Omega(t)dt,
\eea
and for $1\neq \xi=\epsilon x\overline x$, $I(\xi,f')$ is
the regular orbital integral given by
\bea
I(\xi,f')=\int_{\A_F^\times\backslash \A_E^\times}
\int_{\A_F^\times\backslash \A_E^\times}f'\left(t_1\bma 1&\epsilon x\\\overline x&1\ema t_2\right)\Omega(t_1t_2)dt_1dt_2.\label{I-xi-f-prime}
\eea
Note that we assume $\Sigma_{2}\cup\Sigma_{3}\neq\emptyset$. It implies $\delta(\Omega^2)=0$ and thus
\bea
J(f')=\vol(\A_F^\times E^\times\backslash \A_E^\times)I(0,f')
+\sum_{\xi\in\epsilon N(E^\times)}I(\xi,f')\label{formula-geometric-side-pre}.
\eea

The orbital integrals canonically decompose into product of local orbital integrals.
We will calculate the local orbital integrals at places $v\in\Sigma_{2}\cup\Sigma_3$ in Section \ref{sec-local-orbital-integrals}.
The global orbital integrals are discussed in Section \ref{sec-global-orbital-integrals}.

\section{Local integrals on spectral side at $v\in \Sigma_2$}\label{sec-sigma-2}
For $v\in\Sigma_2$,
$E_v/F_v$ is a non-split unramified quadratic extension,
$D_v$ is a quaternion division algebra and
$\pi_v'=\mathrm{JL}(\pi_v)$,
where $\pi_v$ is a depth-zero supercuspidal representation of $PGL_2(F_v)$.
For a classification of depth-zero supercuspidal representations of $GL_2(F_v)$, we
refer to \cite{KnRa}.
By Bushnell and Henniart (\cite{BuHe}), $\pi_v$ can also be characterized by admissible pairs $(E_v/F_v,\theta_v)$
of normalized level zero.

\subsection{Admissible pairs of normalized level zero}\label{subsec-admissible-pair-normalized-level-0}
We recall Section 18 in \cite{BuHe} on admissible pairs which correspond to
 supercuspidal representations of $PGL_2(F_v)$ of level $\mk p_v^2$.
Let $(E_v/F_v,\theta_v)$ be an admissible pair of normalized level 0.
Then
$E_v/F_v$ is unrmaified quadratic extension,
$\theta_v$ is a character of $E_v^\times$ trivial on $U_{E_v}^1$,
and $\theta|_{U_{E_v}}$ does not factor
through the norm map. Moreover we assume $\theta_v$ is trivial on $F_v^\times$
so that
$\theta_v$ is determined by its restriction on $k_{E_v}^\times$.

Fix an isomorphism
$k_{E_v}^\times\simeq\mu(q^2-1)$
with $\mu(q^2-1)$  the group of $(q^2-1)$-th roots of unity.
The restriction of $\theta_v$ on  $\mu(q^2-1)$ is of the form
\bna
\theta_{v,a}:e\left(\frac{1}{q^2-1}\right)\mapsto e\left(\frac{a}{q^2-1}\right)
\ena
for some $a\bmod q^2-1$, with $(q-1)\mid a$ and $(q+1)\nmid a$.
Here we use the notation $e\left(x\right)=\exp(2\pi i x)$.

Moreover,
 $(E_v/F_v,\theta_{v,a})\simeq(E_v/F_v,\theta_{v,aq})$ in the sense
 that they correspond to the equivalent supercuspidal representations
 of $PGL_2(F_v)$. There is $(q-1)/2$ number of such equivalent
 classes of admissible pairs.

\subsection{Representations associated to admissible pairs}
Fixing $(E_v/F_v,\theta_{v,a})$ as above, let $\pi_v\leftrightarrow (E_v/F_v,\theta_{v,a})$ be
the depth-zero supercuspidal representation of $PGL_2(F_v)$. It is of level $\mk p_v^2$. We can construct
the representation $\pi_v'=\mathrm{JL}(\pi_v)$ directly as follows (see Section 54.2 in \cite{BuHe}).

Let $\varpi_{D_v}$ be a prime element in $D_v$ with $(v\circ \mathrm{Nrd})(\varpi_{D_v})=1$,
where $\mathrm{Nrd}$ is the reduced norm on $D_v$.
Choose a maximal order $R_v$ in $D_v$ such that
\bna
R_v\cap E_v=\mk o_{E_v},
\ena
and denote by $U_{D_v}=R_v^\times$. One has
$D_v^\times=\langle\varpi_{D_v}\rangle\ltimes U_{D_v}$. Moreover, for $n\geq 1$,
\bna
U_{D_v}^n=1+\varpi_{D_v}^{n}R_v,
\ena
which are compact subgroups of $U_{D_v}$.

Extend $\theta_{v,a}$ of $E_v^\times$ to be a character of $E_v^\times U_{D_v}^1$ and let
\bna
\pi_v':=\mathrm{Ind}_{E_v^\times U_{D_v}^1}^{D_v^\times}\theta_{v,a}.
\ena
Note that $[D_v^\times:E_v^\times U_{D_v}^1]=2$. We know $\pi_v'$ is
 a two dimensional irreducible representation of $G_v'$ such that $\pi_v'=\mathrm{JL}(\pi_v)$,
where $\pi_v\leftrightarrow (E_v/F_v,\theta_{v,a})$. Moreover,
\bna
\pi'_v|_{E_v^\times}=\theta_{v,a}\oplus\theta_{v,aq}.
\ena
Thus $\pi_{v}'$ is canonically distinguished by $(E_v^\times,\Omega_v)$ for $\Omega_v=\theta_{v,a}$,  and
\bna
\epsilon(\pi_{E_v}\otimes\Omega_v)=-\eta_{E_v/F_v}(-1).
\ena
\subsection{The local integrals at $v\in\Sigma_2$}
The test function $f_v'$ is
\bea
f_v'(g)=\overline{\langle \pi_v'(g)u,u\rangle}
=\left\{
\begin{aligned}
&\theta_{v,a}^{-1}(z),\quad &&g=zu,z\in E_v^\times, u\in U_{D_v}^1,\\
&0,\quad&&\mbox{otherwise}
\end{aligned}
\right.\label{testfunc-v-in-Sigma-2}
\eea
where $u$ is a unit vector in $\pi'_v(\theta_v)$ in (\ref{formula-distinguished-space}). Thus the local integral is
\bna
J_{\pi'_v}(f_v')=\int_{G_v'}f_v'(g)\langle\pi_v(g)u,u\rangle dg
=\vol(E_v^\times U_{D_v}^1/F_v^\times).
\ena

We need to express the local integral via critical $L$-values.
It is well known that
\bna
L(s,\pi_v,\mathrm{Ad})=L(s,\eta_{E_v/F_v})=(1+q_v^{-s})^{-1}.
\ena
For $\pi_v'=\mathrm{JL}(\pi_v)$ with  $\pi_v\leftrightarrow (E_v/F_v,\theta_v)$,
by local Langlands correspondence,
there exists a two-dimensional representation $\rho_v$ of Weil group $\mathcal W_F$
corresponding to $\pi_v$ of $PGL_2(F_v)$.
Such $\rho_v$ is
\bna
\rho_v=\mathrm{Ind}_{\mathcal W_E}^{\mathcal W_F}(\theta_{v,a}\chi_v),
\ena
where $\chi_v$ is an unramified quadratic character of $E^\times_v$ with $\chi_v(\varpi_v)=-1$ (see Definition 34.4 in \cite{BuHe}).
The twisted base change $\pi_{E_v}\otimes \Omega_v$ corresponds to
\bna
\rho_v|_{\mathcal W_E}\otimes\Omega_v=\chi_v\theta_{v,a}^{2}\oplus\chi_v,
\ena
and thus
\bna
L(s,\pi_{E_v}\otimes\Omega_v)=L(s,\rho_v|_{\mathcal W_E}\otimes\Omega_v)=(1+q_v^{2s})^{-1}.
\ena
Therefore, we have the following proposition.
\begin{prop}\label{prop-sigma_2}For $v\in\Sigma_{2}$, by choosing $f_v'$ as in (\ref{testfunc-v-in-Sigma-2}),
one has
\bna
J_{\pi'_v}(f_v')
=\vol(E_v^\times U_{D_v}^1/F_v^\times)\frac{L(1/2,\pi_{E_v}\otimes\Omega_v)}{L(1,\pi_v,\mathrm{Ad})}.
\ena
\end{prop}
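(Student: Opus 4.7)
The plan is to assemble three small computations into the stated identity. First I would evaluate $J_{\pi'_v}(f_v')$ directly from its definition. Since $\pi_v' = \mathrm{Ind}_{E_v^\times U_{D_v}^1}^{D_v^\times}\theta_{v,a}$ and $u$ is a unit vector spanning the $\theta_{v,a}$-isotypic line for $E_v^\times U_{D_v}^1$, for every $g = z u_0$ with $z \in E_v^\times$ and $u_0 \in U_{D_v}^1$ one has $\pi_v'(g)u = \theta_{v,a}(z)u$, using that $\theta_{v,a}$ is extended trivially to $U_{D_v}^1$. Combined with $f_v'(zu_0) = \theta_{v,a}^{-1}(z)$ from (\ref{testfunc-v-in-Sigma-2}), the integrand $f_v'(g)\langle\pi_v'(g)u,u\rangle$ is identically $1$ on the support $E_v^\times U_{D_v}^1/F_v^\times$ and vanishes elsewhere, giving $J_{\pi'_v}(f_v') = \vol(E_v^\times U_{D_v}^1/F_v^\times)$ with respect to the Tamagawa measure on $G_v'$.

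Next I would verify that the L-factor ratio on the right-hand side is trivial at the designated arguments, as already suggested by the computations above. The text records $L(1,\pi_v,\mathrm{Ad}) = L(1,\eta_{E_v/F_v}) = (1+q_v^{-1})^{-1}$, and the parameter $\rho_v = \mathrm{Ind}_{\mathcal W_E}^{\mathcal W_F}(\theta_{v,a}\chi_v)$ is supplied by local Langlands. Triviality of $\theta_{v,a}$ on $F_v^\times$ forces $\sigma$ to act by inversion on the quotient $E_v^\times/F_v^\times$, so $\theta_{v,a}^\sigma = \theta_{v,a}^{-1}$; combined with $\Omega_v = \theta_{v,a}$ this gives $\rho_v|_{\mathcal W_E}\otimes \Omega_v = \chi_v\theta_{v,a}^2 \oplus \chi_v$. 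A short check using the admissibility conditions $(q-1)\mid a$, $(q+1)\nmid a$ together with $q$ odd shows $(q^2-1)\nmid 2a$, so $\theta_{v,a}^2$ is nontrivial on $k_{E_v}^\times$ and the first summand is ramified. The second summand contributes $L(s,\chi_v) = (1-\chi_v(\varpi_v)q_{E_v}^{-s})^{-1} = (1+q_v^{-2s})^{-1}$, and evaluating at $s=1/2$ yields $L(1/2,\pi_{E_v}\otimes\Omega_v) = (1+q_v^{-1})^{-1}$.

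Combining these two steps, the ratio $L(1/2,\pi_{E_v}\otimes\Omega_v)/L(1,\pi_v,\mathrm{Ad}) = 1$, so multiplying the volume identity from the first step by this ratio gives the claimed formula. There is no serious obstacle here; the only bookkeeping that requires care is confirming ramification of $\theta_{v,a}^2$ via the admissibility hypotheses (which is why the assumption $|\mk N_2\mk N_3|$ odd enters, ensuring $q$ is odd) and verifying that the Tamagawa measure used to define $J_{\pi'_v}$ agrees with the one used in $\vol(E_v^\times U_{D_v}^1/F_v^\times)$ — both are the quotient Tamagawa measure on $G_v' = D_v^\times/Z_v$, so no normalization constant intervenes.
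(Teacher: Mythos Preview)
Your proposal is correct and follows essentially the same route as the paper: compute $J_{\pi'_v}(f_v')$ as a volume directly from the matrix coefficient, then verify via the local Langlands parameter that $L(1/2,\pi_{E_v}\otimes\Omega_v) = L(1,\pi_v,\mathrm{Ad}) = (1+q_v^{-1})^{-1}$ so their ratio is $1$. You supply a couple of details the paper leaves implicit --- the identity $\theta_{v,a}^\sigma = \theta_{v,a}^{-1}$ via the norm relation on $E_v^\times/F_v^\times$, and the ramification of $\theta_{v,a}^2$ from the admissibility constraints with $q$ odd --- but the structure of the argument is identical.
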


\section{Local integrals on spectral side at $v\in\Sigma_3$}\label{sec-sigma-3}
For $v\in\Sigma_3$, $G'_v\simeq PGL_2(F_v)$ and
$\pi_v'\simeq\pi_v\leftrightarrow (E_v/F_v,\theta_v)$ is of level $\mk p_v^3$.
In this case, $\pi_v$ are known as simple supercuspidal representations (see \cite{KnLi2}).
Note that the characteristic
of the residue field of $F_v$ is odd, since $2\nmid |\mk N_2\mk N_3|$.
\subsection{Admissible pairs of normalized level $1/2$}\label{subsec-admissible-pair-normalized-level-1-2}
We recall Section 18 in \cite{BuHe} on admissible pairs which correspond to
 supercuspidal representations of $PGL_2(F_v)$ of level $\mk p_v^3$.
Let $(E_v/F_v,\theta_v)$ be an admissible pair of normalized level $1/2$.
Then  $E_v/F_v$ is a ramified quadratic extension,
$\theta_v$ is a character of $E_v^\times$ which is trivial on $U_{E_v}^2$
and nontrivial on $U_{E_v}^1$, and the restriction of $\theta_v$ to
 $U_{E_v}^1$ does not factor through the norm map.
 Moreover we assume that $\theta_v|_{F_v^\times}=1$.
Such admissible pairs are classified in the following lemma,
which is a result of a series of lemmas in Section 18 in \cite{BuHe}.
\begin{lemma}\label{prop-admissible-pair-sigma-3}
Fix an isomorphism of $k_{F_v}\simeq \Z/q\Z$ and let
\bna
\tilde\psi_v:k_{F_v}\simeq \Z/q\Z\rightarrow S^1,\quad 1\mapsto e(1/q)
\ena
be a non-trivial additive character on $k_{F_v}$. Extend $\tilde\psi_v$ to be a character of $F_v$
and let $\tilde\psi_{E_v}=\tilde\psi_v\circ Tr_{E_v/F_v}$.

Let $\theta_v$ be a character of $E_v^\times$ which is trivial on $F_v^\times U_{E_v}^2$
and non-trivial on $U_{E_v}^{1}$.
Then $\theta_v$ is determined by its value at $\varpi_{E_v}$ and on $U_{E_v}^1/U_{E_v}^2$,
and there exists $\alpha=\varpi_{E_v}^{-1}\beta\in\varpi_{E_v}^{-1}\mk o_{E_v}^\times$ such that
\bna
\theta_v(x)=\tilde\psi_{E_v}(\alpha(x-1)),\quad x\in E_v^\times.
\ena

By realizing $U_{E_v}^1/U_{E_v}^2=\{1+\varpi_{E_v} y,\quad y\in k_{F_v}=\Z/q \Z\}$,
we can denote by $\theta_v=\theta_{v,\pm,2\beta}$ for some $\beta\in \Z/q\Z-\{0\}$.
It is determined by the values
\bna
\theta_{v,\pm,2\beta}(\varpi_{E_v})&=&\pm 1\\
\theta_{v,\pm,2\beta}(1+y\varpi_{E_v})&=&\tilde\psi_v(2\beta y).
\ena
Moreover,
$
\{(E_v/F_v,\theta_{v,+,2\beta}),(E_v/F_v,\theta_{v,+,-2\beta})\}$
and
$\{(E_v/F_v,\theta_{v,-,2\beta}),(E_v/F_v,\theta_{v,-,-2\beta})\}$
are equivalent classes of admissible pairs,
and there are $q-1$ number of inequivalent classes of admissible pairs of
{\it normalized level} $1/2$.

\end{lemma}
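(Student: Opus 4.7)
The plan is to use the decomposition of $E_v^\times$ to reduce $\theta_v$ to two pieces of data---a sign at $\varpi_{E_v}$ and a character on $U_{E_v}^1/U_{E_v}^2$---then to invoke Pontryagin duality with respect to $\tilde\psi_{E_v}$ to express the latter in the stated form, and finally to read off admissibility and the equivalence classes.

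First I would use that $E_v/F_v$ is totally tamely ramified of degree two (residue characteristic odd): choose $\varpi_{E_v}$ with $\sigma(\varpi_{E_v})=-\varpi_{E_v}$ for the non-trivial Galois automorphism $\sigma$, and write $\varpi_{E_v}^2=u\varpi_{F_v}$ with $u\in U_{F_v}$. Since $k_{E_v}=k_{F_v}$, one has $E_v^\times=\langle\varpi_{E_v}\rangle\cdot\mu_{q-1}\cdot U_{E_v}^1$ with $\mu_{q-1}\subset F_v^\times$. The hypothesis $\theta_v|_{F_v^\times U_{E_v}^2}=1$ then forces $\theta_v|_{\mu_{q-1}}=1$ and $\theta_v(\varpi_{E_v})^2=\theta_v(u\varpi_{F_v})=1$, so $\theta_v(\varpi_{E_v})=\pm 1$, and $\theta_v|_{U_{E_v}^1}$ factors through $U_{E_v}^1/U_{E_v}^2$; this yields the first assertion of the lemma.

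Next I would identify the factored character by duality. Since $E_v/F_v$ is tamely ramified, $\mathfrak{D}_{E_v/F_v}=(\varpi_{E_v})$, and with $\tilde\psi_v$ of conductor $\mathfrak{p}_{F_v}$ the pullback $\tilde\psi_{E_v}$ has conductor $\mathfrak{p}_{E_v}$. The pairing $(\alpha,x)\mapsto\tilde\psi_{E_v}(\alpha x)$ therefore realizes a perfect duality between $\varpi_{E_v}^{-1}\mathfrak{o}_{E_v}/\mathfrak{o}_{E_v}$ and $\mathfrak{p}_{E_v}/\mathfrak{p}_{E_v}^2\simeq U_{E_v}^1/U_{E_v}^2$ (the isomorphism being $1+\varpi_{E_v}y\mapsto\varpi_{E_v}y$), so there is a unique $\alpha=\varpi_{E_v}^{-1}\beta\in\varpi_{E_v}^{-1}\mathfrak{o}_{E_v}^\times/\mathfrak{o}_{E_v}$ satisfying $\theta_v(1+\varpi_{E_v}y)=\tilde\psi_{E_v}(\alpha\varpi_{E_v}y)=\tilde\psi_{E_v}(\beta y)$. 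Because $\sigma$ acts trivially on the residue field, $\mathrm{Tr}_{E_v/F_v}(\beta)\equiv 2\beta\pmod{\varpi_{E_v}}$; passing to the residue in $k_{F_v}\simeq\mathbb{Z}/q\mathbb{Z}$ (and, with an abuse of notation, still writing $\beta\in(\mathbb{Z}/q\mathbb{Z})^\times$), this gives $\theta_v(1+\varpi_{E_v}y)=\tilde\psi_v(2\beta y)$ for $y\in\mathfrak{o}_{F_v}$. Combined with the sign this produces the form $\theta_v=\theta_{v,\pm,2\beta}$ and $2(q-1)$ candidates.

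Finally, admissibility and the equivalence classes follow by direct computation. The norm satisfies $N_{E_v/F_v}(1+\varpi_{E_v}y)=1-u\varpi_{F_v}y^2$, depending only on $y^2$; any character factoring through $N_{E_v/F_v}$ must therefore be even in $y$, while $y\mapsto\tilde\psi_v(2\beta y)$ is additive and non-trivial for $\beta\neq 0$, so all $2(q-1)$ candidates are admissible of normalized level $1/2$. Two admissible pairs on the same $E_v/F_v$ are equivalent iff they are Galois-conjugate; using $\sigma(\varpi_{E_v})=-\varpi_{E_v}$ together with $\theta_v(-1)=1$, one checks that $\theta_v\circ\sigma$ preserves the sign but sends $\beta\mapsto-\beta$. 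Since $\beta\neq 0$ and $q$ is odd we have $\beta\neq-\beta$, so each Galois orbit has size two, producing exactly $q-1$ inequivalent classes. The main bookkeeping obstacle is tracking the conductor exponents of $\tilde\psi_v$ and $\tilde\psi_{E_v}$ through the tame-ramification identifications; once these conventions are pinned down, every step above is mechanical.
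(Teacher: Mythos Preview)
Your argument is correct and follows essentially the same structure as the paper's: decompose $E_v^\times$ modulo $F_v^\times U_{E_v}^2$, identify the character on $U_{E_v}^1/U_{E_v}^2$ via an element $\alpha=\varpi_{E_v}^{-1}\beta$, and then count equivalence classes. The only real difference is that you prove explicitly (via Pontryagin duality, the parity argument for admissibility, and the direct Galois computation $\theta_v\circ\sigma=\theta_{v,\pm,-2\beta}$) what the paper obtains by citing Bushnell--Henniart; in particular the paper deduces admissibility from the identity $N_{E_v/F_v}(U_{E_v}^1)=N_{E_v/F_v}(U_{E_v}^2)$ rather than your even/odd observation, but these are equivalent one-line checks.
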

\begin{proof}We can assume that
$\mk o_{E_v}=\mk o_{v}[\varpi_{E_v}]$, where $\varpi_{E_v}$
is a root of
$x^2-\varpi_v=0$.
One has $k_{F_v}^\times\simeq \mu(q-1)$ and
\bna
\begin{aligned}
F_v^\times&=\langle\varpi_v\rangle\cdot \mu(q-1)\cdot U_v^{1},\\
E_v^\times&=\langle\varpi_{E_v}\rangle\cdot \mu(q-1)\cdot U_{E_v}^{1}/U_{E_v}^2\cdot U_{E_v}^2.
\end{aligned}
\ena
Note $U_v^1=1+\varpi_v\mk o_v\subset U_E^2$. Thus $\theta_v$ factors through
\bna
\theta_v: E_v^\times\rightarrow E_v^\times/{F_v}^\times U_{E_v}^2=\frac{\langle\varpi_{E_v}\rangle}{\langle\varpi_v\rangle}
\times \frac{U_{E_v}^1}{U_{E_v}^2}\rightarrow S^1,
\ena
where
\bna
U_{E_v}^1/U_{E_v}^2=\{1+\varpi_{E_v} y,\quad y\in k_{F_v}\}.
\ena

By Bushnell and Henniart in \cite{BuHe},
there exists $\alpha=\varpi_{E_v}^{-1}\beta\in\varpi_{E_v}^{-1}\mk o_{E_v}^\times$ such that
\bna
\theta_v(x)=\tilde \psi_{E_v}(\alpha(x-1)),\quad\forall x\in U_{E_v}^1.
\ena
In fact, for $x\in U_{E_v}^1$, by writing
$x= 1+\varpi_{E_v} y+\varpi_{E_v}^2 y'$ with $y\in k_{F_v}$ and $y'\in\mk o_{E_v}$,
one has
\bna
\begin{aligned}
\theta_v(x)&=\tilde\psi_{E_v}(\alpha(x-1))\\
&=(\tilde\psi_v\circ Tr_{E_v/F_v})(\varpi_{E_v}^{-1}\beta(\varpi_{E_v} y+\varpi_{E_v}^2y')))\\
&=\tilde\psi_v(2\beta y),
\end{aligned}
\ena
where $0\neq\beta\in \Z/q\Z$. Thus $\theta_v$ is of the form $\theta_v=\theta_{v,\pm,2\beta}$
for some $\beta\in Z/q\Z-\{0\}$.
Moreover, the restriction of $\theta_{v,\pm,2\beta}$ to $U_{E_v}^1$ does not factor through $N_{E_v/F_v}$,
since
\bna
N_{E_v/F_v}(U_{E_v}^{2})=N_{E_v/F_v}(U_{E_v}^{1}).
\ena

Finally,
$\{(E_v/F_v,\theta_{v,+,2\beta}),(E_v/F_v,\theta_{v,+,-2\beta})\}$
and
$\{(E_v/F_v,\theta_{v,-,2\beta}),(E_v/F_v,\theta_{v,-,-2\beta})\}$
are equivalent classes of admissible pairs (see Section 18 in \cite{BuHe}).
\end{proof}

Consider
\bna
D_v=\left\{\bma\alpha &\beta\\\overline\beta&\overline\alpha\ema,\quad \alpha,\beta\in E_v\right\}.
\ena
It is an $F_v$-algebra generated by
\bna
\left\{\bma\varpi_{E_v}&\\&-\varpi_{E_v}\ema,\quad
\bma&1\\1&\ema\right\}.
\ena
We have an $F_v$-algebra isomorphism $D_v\simeq M_2(F_v)$ given by
\bea
\bma\varpi_{E_v}\\&-\varpi_{E_v}
\ema
\mapsto\bma&1\\\varpi_{v}\ema,
\quad
\bma&1\\1&
\ema
\mapsto\bma1&\\&-1\ema.\label{iso-Dv-M2Fv}
\eea
The image of $\mk o_{E_v}$ is contained in a maximal order $\mk J$ of $M_2(F_v)$, where
\bna
\mk J:=\left[\begin{matrix}\mk o_v&\mk o_v\\\mk p_v&\mk o_v\end{matrix}\right],
\ena
and the image of $\varpi_{E_v}$ in $\mk J$ is
\bna
\varpi_\mk J:=\bma&1\\\varpi_v\ema,
\ena
which is a prime element in $\mk J$ such that $v_\mk J(\varpi_\mk J)=v_{E_v}(\varpi_{E_v})=1$,
where $v_\mk J=v\circ \det$ is the discrete valuation on $\mk J$.
We know
$U_\mk J=\left\{ X\in\mk J, v_\mk J(X)=0\right\}$
is the Iwahori subgroup of $GL_2(F_v)$. Let
\bna
U_\mk J^n=I_2+\varpi_\mk J^n \mk J,\quad n\geq 1.
\ena
The following lemma is a well-known result.
\begin{lemma}\label{prop-U-J-U-J-1} We have
\bna
U_{\mk J}/U_{\mk J}^1\simeq\left\{\bma a&\\&d\ema,\quad a,d\in k_{F_v}^\times\right\}.
\ena
Moreover, a set of representatives of $U_{\mk J}^1/U_{\mk J}^2$ is
\bea
\left\{X=I_2+\bma 0&a\\\varpi_v b&0\ema,\quad a,b\in k_{F_v}\right\}.\label{U-J-1-2}
\eea
\end{lemma}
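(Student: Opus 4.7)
The plan is to reduce both statements to a direct computation of the filtration $\varpi_\mk J^n\mk J$ for $n=0,1,2$. First I would note that
\bna
\varpi_\mk J^2=\bma & 1\\ \varpi_v & \ema^2=\varpi_v I_2,
\ena
so $\varpi_\mk J^2\mk J=\varpi_v\mk J=\bma \mk p_v&\mk p_v\\\mk p_v^2&\mk p_v\ema$. A direct matrix multiplication also gives
\bna
\varpi_\mk J\mk J=\bma \mk p_v&\mk o_v\\\mk p_v&\mk p_v\ema,\qquad \mk J\varpi_\mk J=\bma\mk p_v&\mk o_v\\\mk p_v&\mk p_v\ema,
\ena
so $\varpi_\mk J\mk J=\mk J\varpi_\mk J$ is a two-sided ideal of $\mk J$.

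For the first claim, entry-wise reduction yields
\bna
\mk J/\varpi_\mk J\mk J\simeq \bma k_{F_v}&0\\0&k_{F_v}\ema,
\ena
the diagonal subring, and passing to units gives the stated description of $U_\mk J/U_\mk J^1$. For the second claim, since $\varpi_\mk J\mk J$ is a two-sided ideal we have $(\varpi_\mk J\mk J)^2\subseteq\varpi_\mk J^2\mk J$, so the map $X\mapsto I_2+X$ induces a group isomorphism from the additive group $\varpi_\mk J\mk J/\varpi_\mk J^2\mk J$ onto the multiplicative group $U_\mk J^1/U_\mk J^2$. Computing quotients entry by entry,
\bna
\varpi_\mk J\mk J/\varpi_\mk J^2\mk J=\bma 0& \mk o_v/\mk p_v\\ \mk p_v/\mk p_v^2&0\ema,
\ena
and a set of representatives for this additive quotient is $\bma 0&a\\\varpi_v b&0\ema$ with $a,b\in k_{F_v}$, which upon adding $I_2$ gives the representatives (\ref{U-J-1-2}).

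There is no serious obstacle here; the lemma is truly a routine computation. The only point requiring a moment's care is verifying that $\varpi_\mk J\mk J=\mk J\varpi_\mk J$, which is what legitimizes the transition from the multiplicative quotient $U_\mk J^1/U_\mk J^2$ to an additive quotient of ideals. Once this is in place, listing the $q^2$ representatives is immediate.
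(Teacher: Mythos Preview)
Your proof is correct. The paper does not actually prove this lemma---it simply states it as ``a well-known result'' and moves on---so there is nothing to compare against; your computation of the filtration $\varpi_\mk J^n\mk J$ and the passage from the multiplicative quotient to the additive one via $X\mapsto I_2+X$ is exactly the standard verification one would expect.
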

\subsection{Representations associated to admissible pairs}
We can construct supercuspidal representations $\pi_v'\simeq\pi_v$ of $G_v'$
of level $\mk p_v^3$
via
admissible pairs as in Section 19.3 in \cite{BuHe}.

Given $(E_v/F_v,\theta_{v,\pm,2\beta})$ as in Lemma \ref{prop-admissible-pair-sigma-3},
we have $\alpha=\varpi_{E_v}^{-1}\beta$ such that for $x=1+y\varpi_{E_v}\in U_{E_v}^1/U_{E_v}^2$,
\bna
\theta_{v,\pm,2\beta}(x)=\tilde\psi_{E_v}(\alpha(x-1))=\left(\tilde\psi_{v}\circ Tr_{E_v/F_v}\right)(\beta y).
\ena
Note that the image of $\alpha=\varpi_{E_v}^{-1}\beta$ in $M_2(F_v)$ is
\bna
\alpha \mapsto\varpi_v^{-1}\bma&\beta\\\varpi_v\beta\ema.
\ena
We can extend $\theta_{v,\pm,2\beta}$ to be a character $\lambda_\beta$ of $U_{\mk J}^1$ by
\bna
\lambda_\beta(X)=(\tilde\psi_v\circ Tr)(\alpha (X-I_2)).
\ena
In fact, for $X=I_2 +\bma 0&a\\\varpi_v b&0\ema\in U_\mk J^1/U_\mk J^2$ as in (\ref{U-J-1-2}),
\bna
\lambda_\beta(X)&=&(\tilde\psi_v\circ Tr)\left(\varpi_v^{-1}\bma&\beta\\\varpi_v\beta\ema\bma&a\\\varpi_v b\ema
\right)\\
&=&\tilde\psi_v(\beta(a+b))\\
&=&\theta_{v,\pm,2\beta}\left(1+\varpi_v\frac{a+b}{2}\right).
\ena
So $\lambda_\beta|_{U_{E_v}^1}=\theta_{v,\pm,2\beta}$.

Let $\Lambda$ be a character of of $E_v^\times U_J^1$ such that
\bna
\Lambda|_{E_v^\times}=\theta_{v,\pm,2\beta},\quad\Lambda|_{U_\mk J^1}=\lambda_\beta.
\ena
Note that
$E_v^\times U_\mk J^1=\langle\varpi_{E_v}\rangle\cdot \mu(q-1)\cdot
U_{\mk J}^1/U_{\mk J}^2 \cdot U_\mk J^2$.
$\Lambda$ is determined by its values
\bna
\begin{aligned}
\Lambda(\varpi_{E_v})&=\theta_{v,\pm,2\beta}(\varpi_E)=\pm 1\\
\Lambda(X)&=\theta_{v,\pm,2\beta}\left(1+\varpi_v\frac{a+b}{2}\right),
\end{aligned}
\ena
where $X=I_2+\bma &a\\b\varpi_v&\ema\in U_{\mk J}^1/U_{\mk J}^2$.

By Section 19.3 in \cite{BuHe},
\bea
\pi_v'=\mathrm{c-Ind}_{E^\times U_\mk J^1}^{GL_2(F_v)}\Lambda\label{sigma-3-pi_v}
\eea
is an irreducible supercuspidal representation of $GL_2(F_v)$ with trivial center character.

\subsection{The local integrals}
For $\pi_v'$ in (\ref{sigma-3-pi_v}) and $\Omega_v=\theta_{v,\pm,2\beta}$,
obviously one has
\bna
\dim \mathrm{Hom}_{E_v^\times}(\pi_v',\Omega_v)=1.
\ena
Let $0\neq u$ be a unit vector in $\pi_v'(\Omega_v)$ in (\ref{formula-distinguished-space}).
By Proposition 1.1 in \cite{KnRa},
the test function $f_v'(g)$ is
\bea
f_v'(g)=\overline{\langle\pi_v'(g)u,u\rangle}
=\left\{
\begin{aligned}
&\Lambda^{-1}(g),\quad &&g\in E_v^\times U_{\mk J}^1,\\
&0,\quad&&\mbox{otherwise}.
\end{aligned}
\right.\label{testfunc-v-in-Sigma-3}
\eea
More precisely,  we can write $g\in E_v^\times U_{\mk J}^1$ as
\bna
g=\varpi_{E_v}^m \kappa\cdot \bma 1&a\\\varpi_v b&1\ema\cdot u
\ena
with $\kappa\in \mu(q-1)$,
$\bma 1&a\\\varpi_v b&1\ema\in U_{\mk J}^1/U_{\mk J}^2$ and $u\in U_{\mk J}^2$.
In this case,
\bna
\Lambda(g)=(\pm 1)^m \theta_{v,\pm, 2\beta}\left(1+\varpi_{E_v}\frac{a+b}{2}\right).
\ena

The local distribution is
\bna
J_{\pi'_v}(f_v')=\int_{G_v'/Z_v}f_v'(g)\langle \pi_v'(g)u,u\rangle dg
=\vol(E_v^\times U_\mk J^1/F_v^\times),
\ena
which is the inverse of the formal degree of $\pi_v'$ (see Proposition 1.2 in \cite{KnRa}).
We need to express $J_{\pi'_v}(f_v')$  via critical $L$-values.
By Proposition 34.3 and  Section 34.4 in \cite{BuHe},
for $\pi_v'\simeq \pi_v\leftrightarrow (E_v/F_v,\theta_{v,\pm,2\beta})$,
there exists another admissible pair $(E_v/F_v,\theta'_v)$ with
\bna
\theta'_v|_{U_{E_v}^1}=\theta_{v,\pm,2\beta},\qquad
\theta'_{v}|_{F_v^\times}=\eta_{E_v/F_v},
\quad\theta'_{v}(\varpi_{E_v})= \pm\eta_{E_v/F_v}(\beta)\frac{\tau(\eta_{E_v/F_v},\psi_v)}{q^{1/2}},
\ena
and a character $\chi_v$ of $E_v^\times$ with
\bna
\chi_v|_{U_{E_v}^1}=1,\quad
\chi_v|_{F_v^\times}=\eta_{E_v/F_v},
\quad\chi_v(\varpi_{E_v})=\eta_{E_v/F_v}(\beta)\frac{\tau(\eta_{E_v/F_v},\psi_v)}{q^{1/2}},
\ena
such that $(E_v/F_v,\theta_v'\chi_v)\simeq (E_v/F_v,\theta_{v,\pm,2\beta})$.
Here $\tau(\eta,\psi_v)$ is Gauss sum (see Section 23.6 in \cite{BuHe}).
By the Local Langlands correspondence in Theorem 34.4 in \cite{BuHe},
$\pi_v'$ corresponds to a two-dimensional representation $\rho_v$ of $\mathcal W_F$ given by
\bna
\rho_v:=\mathrm{Ind}_{\mathcal W_E}^{\mathcal W_F}\theta_v'.
\ena
Thus the twisted base change $\pi_{E_v}\otimes \Omega_v$ corresponds to
\bna
\rho_v|_{\mathcal W_E}\otimes\Omega_v=(\theta_{v}')\theta_v\bigoplus(\theta_{v}')^\sigma\theta_v,
\ena
where $\sigma$ is the non-trivial element in $\mathrm{Gal}(E_v/F_v)$. It gives that
\bna
L(s,\pi_{E_v}\otimes\Omega_v)=L(s,\rho_v|_{\mathcal W_E}\otimes\Omega_v)=1.
\ena
Thus we have the following proposition.
\begin{prop}\label{prop-sigma_3}
For $v\in\Sigma_{3}$, by choosing $f_v'$ as in (\ref{testfunc-v-in-Sigma-3}),
one has
\bna
J_{\pi'_v}(f_v')
=\vol(E^\times U_\mk J^1/F^\times)\frac{L(1/2,\pi_{E_v}\otimes\Omega_v)}{L(1,\pi_v,\mathrm{Ad})}.
\ena
\end{prop}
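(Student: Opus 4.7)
The plan is to split the identity into two independent pieces: the direct evaluation of $J_{\pi'_v}(f_v')$ as a volume, and the separate verification that $L(1/2,\pi_{E_v}\otimes\Omega_v)=L(1,\pi_v,\mathrm{Ad})=1$, so that multiplying these two ingredients produces the stated formula. First I would substitute $f_v'(g)=\overline{\langle\pi_v'(g)u,u\rangle}$ into the local distribution to obtain $J_{\pi'_v}(f_v')=\int_{G'_v/Z_v}|\langle\pi_v'(g)u,u\rangle|^2\,dg$. Since $\pi_v'=\mathrm{c-Ind}_{E_v^\times U_{\mk J}^1}^{GL_2(F_v)}\Lambda$, the matrix coefficient of the unit vector $u\in\pi_v'(\Omega_v)$ is supported on $E_v^\times U_{\mk J}^1$ and reduces to $\Lambda(g)$ there by Frobenius reciprocity. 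Invoking the formal degree computation in Proposition 1.2 of \cite{KnRa}, the integral collapses to $\vol(E_v^\times U_{\mk J}^1/F_v^\times)$, which is the non-$L$-function part of the claimed identity.

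Then I would turn to the $L$-functions using the Local Langlands data recalled in the text. By Proposition 34.3 and Section 34.4 of \cite{BuHe}, $\pi_v$ is parametrized by $\rho_v=\mathrm{Ind}_{\mathcal W_E}^{\mathcal W_F}\theta_v'$ for the specific character $\theta_v'$ given there, and $\rho_v|_{\mathcal W_E}=\theta_v'\oplus(\theta_v')^\sigma$. For the twisted base change, both summands of $\rho_v|_{\mathcal W_E}\otimes\Omega_v$ are ramified characters of $E_v^\times$ since $\theta_v'\theta_v|_{U_{E_v}^1}=\theta_{v,\pm,2\beta}^2$ is a non-trivial additive character of $k_{F_v}$ (using $2\nmid q$); thus $L(s,\pi_{E_v}\otimes\Omega_v)=1$. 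For the adjoint I would use the dihedral decomposition $\mathrm{Ad}(\rho_v)=\eta_{E_v/F_v}\oplus\mathrm{Ind}_{\mathcal W_E}^{\mathcal W_F}(\theta_v'/(\theta_v')^\sigma)$: ramification of $E_v/F_v$ (since $v\in\Sigma_3$) gives $L(s,\eta_{E_v/F_v})=1$, and the admissibility assumption that $\theta_v|_{U_{E_v}^1}$ does not factor through $N_{E_v/F_v}$ forces $\theta_v'/(\theta_v')^\sigma$ to be non-trivial on $U_{E_v}^1$, yielding $L(s,\theta_v'/(\theta_v')^\sigma)=1$ as well.

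The main technical obstacle is ensuring the correct identification of $\theta_v'$: the Langlands parameter is not $\theta_v=\theta_{v,\pm,2\beta}$ itself but its twist by a character $\chi_v$ involving the Gauss-sum factor $\pm\eta_{E_v/F_v}(\beta)\tau(\eta_{E_v/F_v},\psi_v)/q^{1/2}$, and one must verify the compatibility $\theta_v'|_{U_{E_v}^1}=\theta_v|_{U_{E_v}^1}$ so that the ramification analysis transfers from $\theta_v$ to $\theta_v'$. Without this rectification the $L$-parameter computation fails to track the behaviour on $F_v^\times$ correctly and the subsequent ramification analysis breaks down; once the right $\theta_v'$ is in place, the remaining arguments are standard facts about dihedral supercuspidals, and combining the two calculations yields the proposition.
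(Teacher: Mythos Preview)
Your approach is essentially the paper's: compute $J_{\pi'_v}(f_v')=\vol(E_v^\times U_{\mk J}^1/F_v^\times)$ directly from the matrix-coefficient description of $f_v'$, then show the ratio $L(1/2,\pi_{E_v}\otimes\Omega_v)/L(1,\pi_v,\mathrm{Ad})$ equals $1$ via the Langlands parameter $\rho_v=\mathrm{Ind}_{\mathcal W_E}^{\mathcal W_F}\theta_v'$. In fact you are more thorough than the paper on the adjoint factor: the paper only checks $L(s,\pi_{E_v}\otimes\Omega_v)=1$ and leaves $L(1,\pi_v,\mathrm{Ad})=1$ implicit, whereas you supply the dihedral decomposition argument.

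One small correction is needed in your ramification check for the base change. You assert that both summands $\theta_v'\theta_v$ and $(\theta_v')^\sigma\theta_v$ are ramified because $\theta_v'\theta_v|_{U_{E_v}^1}=\theta_{v,\pm,2\beta}^2$ is nontrivial. That argument only handles the first summand. For the second, since $\sigma(\varpi_{E_v})=-\varpi_{E_v}$ one has $(\theta_v')^\sigma(1+y\varpi_{E_v})=\tilde\psi_v(-2\beta y)$, so $(\theta_v')^\sigma\theta_v$ is actually \emph{trivial} on $U_{E_v}^1$. Its ramification comes instead from the level-zero piece $U_{E_v}/U_{E_v}^1\simeq k_{F_v}^\times$: there $(\theta_v')^\sigma=\theta_v'$ restricts to $\eta_{E_v/F_v}$ (nontrivial, as $E_v/F_v$ is ramified) while $\theta_v|_{F_v^\times}=1$, so $(\theta_v')^\sigma\theta_v$ is ramified of conductor $\mk p_{E_v}$. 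With this fix your argument goes through unchanged.
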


\section{Global integrals on spectral side}\label{sec-spectral-side}
In this section, we recall results on
local integrals $J_{\pi_v'}(f_v')$  for $v\notin\Sigma_2\cup\Sigma_3$ in \cite{MW},
and prove the following proposition.
\begin{prop}\label{prop-spectral-side}
We have
\bna
J(f')=
\frac{4^{[F:\Q]}L_{S'(\Omega)}(1,\eta)^2}{2|\Delta_F|^{2}\sqrt{c(\Omega)d_{E/F}}}
\frac{2^{\#\Sigma_3}}{|\mk N_1\mk N_3|}
\bma 2{\bf k}-2\\ {\bf k+m}-1\ema
\sum_{\pi\in\mathcal F^{\mathrm{new}}(2{\bf k},\mk N,\Theta)}\frac{L(1/2,\pi_E\otimes\Omega)}{L(1,\pi,\mathrm{Ad})},
\ena
where $S'(\Omega)=S(\Omega)-\Sigma_{2}-\Sigma_3$ is in (\ref{S-prime-Omega}), and
\bna
\bma 2{\bf k}-2\\ {\bf k+m}-1\ema=\prod_{v\in\Sigma_\infty}
\bma 2 k_v-2\\ k_v+m_v-1\ema.
\ena
\end{prop}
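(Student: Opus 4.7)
The plan is to derive the formula by substituting into the Jacquet--Chen factorization (Proposition 2.1), summed over $\pi' \in \mathcal F'(2{\bf k}, \mk N, \Theta)$, the explicit local integral computations at every place. At $v \in \Sigma_2 \cup \Sigma_3$, Propositions 3.3 and 4.3 already express $J_{\pi'_v}(f'_v)$ as a volume factor times the ratio $L(1/2, \pi_{E_v} \otimes \Omega_v)/L(1, \pi_v, \mathrm{Ad})$; at every remaining place, the analogous identity is supplied by the local calculations of Martin and Whitehouse \cite{MW}. Choosing $S_0$ to contain $\Sigma_\infty \cup \Sigma_1 \cup \Sigma_2 \cup \Sigma_3 \cup S(\Omega)$ together with all places which split or ramify in $E$, the local $L$-ratios recovered at every $v \in S_0$ fill in the missing Euler factors of the partial $L$-functions $L^{S_0}(1, \pi, \mathrm{Ad})$ and $L^S(1/2, \pi_E \otimes \Omega)$ in Proposition 2.1, producing the complete global ratio $L(1/2, \pi_E \otimes \Omega)/L(1, \pi, \mathrm{Ad})$ multiplied by a product of purely local constants.

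Next I would collect these constants place by place. The archimedean places produce the factor $\bma 2{\bf k}-2\\ {\bf k+m}-1\ema$ from integrating the matrix coefficient of the holomorphic discrete series against the character $\Omega_v(z) = (z/\overline{z})^{m_v}$, together with the Tamagawa normalization $\vol(D^\times_v/Z_v) = 4\pi^2$ of Proposition 2.2, supplying the constant $4^{[F:\Q]}$ up to archimedean local $L$-values. The places $v \in \Sigma_1$ contribute $1/(q_v-1)$ from $\vol(R_v^\times)$, which assembles into $|\mk N_1|^{-1}$ up to factors of $L(2,1_{F_v})$; the places $v \in \Sigma_3$ contribute $2^{\#\Sigma_3}/|\mk N_3|$ from $\vol(E_v^\times U_{\mk J}^1/F_v^\times)$. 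The global factor $L_{S_0}(1,\eta)$ of Proposition 2.1, the product $\prod 2 L(0,\eta_v) \epsilon(1, \eta_v, \psi_v)$ over the inert places of $S_0$, and the Martin--Whitehouse local constants at places in $S'(\Omega)$ recombine, via the local functional equation for $\eta$, into $L_{S'(\Omega)}(1, \eta)^2/\sqrt{d_{E/F} c(\Omega)}$. Finally, the identity $\prod_{v<\infty} \vol(U_v, d^\times x_v) = |\Delta_F|^{-1/2}$, combined with the global Tamagawa factor $L^{S_0}(2, 1_F)$ that cancels the local $L(2, 1_{F_v})^{-1}$ appearing in the volume formulas of Proposition 2.2, supplies the remaining $|\Delta_F|^{-2}$.

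The main obstacle is careful bookkeeping rather than any conceptual difficulty: one must track Gauss sums, conductor exponents, and Tamagawa measure normalizations across all the local contributions and verify that they reassemble into the asserted clean factors $L_{S'(\Omega)}(1,\eta)^2/\sqrt{c(\Omega) d_{E/F}}$ and $4^{[F:\Q]}/|\Delta_F|^2$. The Jacquet--Chen framework guarantees that such an assembly exists; the work lies in executing it explicitly, with particular care at the split places and at the ramified places of $\Omega$ outside $\Sigma_2 \cup \Sigma_3$, where the relevant local identity is the one provided by \cite{MW}.
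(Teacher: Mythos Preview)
Your approach is essentially identical to the paper's: it organizes the computation into two lemmas, one applying the Jacquet--Chen factorization together with the local integrals at each place type (archimedean, $\Sigma_1$, $\Sigma_2$, $\Sigma_3$, and the Martin--Whitehouse cases) to express $J_{\pi'}(f')$ as the global $L$-ratio times an explicit constant $C(2{\bf k},\mk N,\Theta,S_0)$, and a second lemma evaluating that constant via the Tamagawa volume formulas of Proposition~\ref{prop-local-Tamagawa}. One minor correction: $S_0$ must be a \emph{finite} set of places outside which everything is unramified, so it cannot contain all split places of $E$; this does not affect your argument, since the unramified split places are handled uniformly by Lemma~\ref{lemma-spectral-MW-1}.
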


\subsection{Local integrals at $v\notin\Sigma_2\cup\Sigma_3$ }
\label{section-result-of-F-W}
We recall results of Martin and Whitehouse in \cite{MW} (or see Feigon and Whitehouse in \cite{FW}),
where they calculated
the local integrals $J_{\pi_v'}(f_v')$ for those  $v\notin\Sigma_{2}\cup\Sigma_3$.
\subsubsection{Archimedean places}
For $v\in \Sigma_{\infty}$,  $E_v=\C$ and $D_v$ is the Hamiltonian algebra.
In this case, $\pi_{v}'=\mathrm{JL}(\pi_v)$ is a $2k_v-1$ dimensional unitary representation of $G_v'$,
which is distinguished by $(\C^\times,\Omega_v)$ with
\bna
\Omega_v(z)=\left(\frac{z}{\overline z}\right)^{m_v},\quad z\in \C^\times.
\ena
The test function is
\bna
f_v'(g)
=\left\{
\begin{aligned}
&\Omega_v(t)^{-1},\quad&& t\in E_v^\times,\\
&0,\quad&&\mbox{otherwise}.
\end{aligned}
\right..
\ena
Then one has (see lemma 3.7 in \cite{MW})
\bea
J_{\pi'_{v}}(f'_v)
=\frac{\vol(G_v')}{2k_v-1}\frac{L(1/2,\pi_{E_v}\otimes\Omega_v)L(1,\eta_v)}{L(1,\pi_v,\mathrm{Ad}) L(2,1_{F_v})}
\times\frac{1}{2\pi B(k+|m|,k-|m|)},\label{formula-spectral-Arch}
\eea
where $B(n_1,n_2)$ is the Euler beta function.

\subsubsection{Non-Archimedean places $v\in\Sigma_{1}$}
For $v\in\Sigma_1$,
$E_v/F_v$ is unramified inertia,
 $G'_v=D_v^\times/Z_v$ with $D_v$ the  quaternion
division algebra,  and $\pi_v'$ of $G_v'$ is of the form
\bna
\pi_v'=\delta_v\circ \mathrm{Nrd},
\ena
where $\delta_v$ are unramified characters of $F_v^\times$ of order at most $2$,
and $\mathrm{Nrd}$ is the reduced norm on $D_v^\times$.
Let $R_v$ be the maximal order of $D_v$ such that
\bna
E_v^\times\cap R_v=\mk o_{E_v}^\times.
\ena
On taking $f_v'=1_{Z_v R_v^\times}$, we have
\bea
 J_{\pi_v'}(f_v')
=\vol(R_v^\times Z_v/Z_v)
\frac{L(1/2,\pi_{E_v}\otimes\Omega_v)}{L(1,\pi_v,\mathrm{Ad})}.\label{formula-spectral-sigma-1}
\eea
\subsubsection{Non-Archimedean places $v\notin\Sigma_1\cup\Sigma_2\cup\Sigma_3$}
For $v\notin\Sigma_\infty\cup\Sigma_{1}\cup\Sigma_2\cup\Sigma_3$,
$\pi_v'\simeq\pi_v$ are unramified representations of $G_v'\simeq PGL_2(F_v)$.
Let $R_v$ be a maximal order of $M_2(F_v)$ such that
\bna
E_v\cap R_v=\mk o_{v}+\varpi_v^{n(\Omega_v)}\mk o_{E_v},
\ena
and take $f_v'=1_{Z_vR_v^\times}$.
We have the following result (See lemmas 2.1 and pages 172-173 in \cite{MW}).
\begin{lemma}\label{lemma-spectral-MW-1}
If $v$ is split in $E$, one has
\bna
J_{\pi_v'}(f_v')=\vol(R_v^\times Z_v/Z_v)\frac{L(1/2,\pi_{E_v}\otimes\Omega_v)}{L(1,\pi_v,\mathrm{Ad}) }
\times\left\{
\begin{aligned}
&\frac{\vol(U_v)L(2,1_{F_v})}{L(1,1_{F_v})}
,  \quad &&\mbox{$\Omega_v$ unramified}\\
&\frac{\vol(U_v)L(2,1_{F_v})L(1,1_{F_v})}{q^{n(\Omega_v)}}
, \quad  &&\mbox{$\Omega_v$ ramified}
\end{aligned}
\right..
\ena
If $v$ is non-split in $E$, one has
\bna
 J_{\pi_v'}(f_v')
=\vol(R_v^\times Z_v/Z_v)\frac{L(1/2,\pi_{E_v}\otimes\Omega_v)}{L(1,\pi_v,\mathrm{Ad})}
\times\left\{
\begin{aligned}
&\frac{1}{e(E_v|F_v)}\frac{L(2,1_{F_v})}{L(1,\eta_v)},\quad&&\mbox{$\Omega_v$ unramified}\\
&\frac{q^{-n(\Omega_v)}L(1,\eta_v)^2}{e(E_v|F_v)}\frac{L(2,1_{F_v})}
{L(1,\eta_v)},\quad&&\mbox{$\Omega_v$ ramified}
\end{aligned}
\right..
\ena
\end{lemma}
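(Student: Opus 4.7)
The plan is to follow the local calculations of Martin and Whitehouse in \cite{MW}, which treat precisely these unramified Gross--Prasad type local integrals using the decomposition of $J_{\pi_v'}(f_v')$ supplied by Proposition \ref{prop-Jacquet-Chen}. The argument naturally splits into the two cases in the statement, according to whether $v$ is split or non-split in $E$.

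In the split case, write $E_v \simeq F_v \times F_v$ so that $T(F_v)/Z_v$ is identified with a diagonal torus in $G_v' \simeq PGL_2(F_v)$. By Proposition \ref{prop-Jacquet-Chen} the integral factors as a product of two mutually conjugate zeta integrals of Whittaker functions $W \in \mathscr W_{\pi_v'}$ against $\Omega_v^{-1}$ along the diagonal torus. Since $f_v' = 1_{Z_v R_v^\times}$ and $R_v$ has been chosen so that $E_v \cap R_v = \mk o_v + \varpi_v^{n(\Omega_v)}\mk o_{E_v}$, the operator $\pi_v'(f_v')$ is a volume-scaled projector onto the unique line of vectors whose torus character matches the conductor of $\Omega_v$. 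Applying the Casselman--Shalika formula (or, equivalently, Hecke's classical computation of local zeta integrals), and using that $\Omega_v$ is trivial on the subtorus $F_v^\times$ so one of the two zeta integrals collapses to an elementary sum, one obtains the factor $L(1/2,\pi_{E_v}\otimes\Omega_v)/L(1,\pi_v,\mathrm{Ad})$ multiplied by explicit local volumes; converting $\vol(U_v)$ and $\vol(R_v^\times Z_v/Z_v)$ via the Tamagawa normalizations of Subsection \ref{subsec-local-tamagawa} yields the stated formula, the $L(1,1_{F_v})$ factor coming from the split torus measure and the $q^{n(\Omega_v)}$ denominator reflecting the newvector level.

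In the non-split case, $E_v/F_v$ is an unramified quadratic field extension and $T(F_v)/Z_v$ is compact modulo the factor generated by $\varpi_{E_v}$. Proposition \ref{prop-Jacquet-Chen} now expresses the integral as the matrix coefficient integral
\begin{equation*}
J_{\pi_v'}(f_v') = \int_{G_v'} f_v'(g)\, \langle \pi_v'(g) u, u\rangle\, dg = \int_{Z_v R_v^\times} \langle \pi_v'(g) u, u\rangle\, dg,
\end{equation*}
with $u \in \pi_v'(\Omega_v)$ a unit vector. Using the Macdonald formula for unramified matrix coefficients, one decomposes $R_v^\times$ along double cosets for the embedded torus $\mk o_{E_v}^\times$: the coset representatives are indexed by a lattice depending on $n(\Omega_v)$, and the sum becomes a geometric series in $q_v^{-1}$ whose closed form is exactly the local Rankin--Selberg $L$-factor $L(1/2,\pi_{E_v}\otimes\Omega_v)$. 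Combining with the normalization of Proposition \ref{prop-local-Tamagawa} and the local factor $L(1,\eta_v)$ arising from the anisotropic torus measure produces the stated formula, the two subcases being distinguished by whether $\Omega_v$ is trivial on $\mk o_{E_v}^\times$.

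The main technical obstacle is the explicit identification of the distinguished vector $u \in \pi_v'(\Omega_v)$ compatible with the support of $f_v'$ and the evaluation of its matrix coefficient on a set of coset representatives of $\mk o_{E_v}^\times \backslash R_v^\times$. The compatibility condition $E_v \cap R_v = \mk o_v + \varpi_v^{n(\Omega_v)}\mk o_{E_v}$ is precisely what renders this calculation tractable: it ensures that the embedded torus sits inside $R_v^\times$ in a manner matching the conductor of $\Omega_v$, so that the Macdonald/Hecke sum terminates against the character $\Omega_v$ in the expected way. This is the content of Lemma 2.1 and the discussion on pages 172--173 of \cite{MW}, which we invoke directly to complete the proof.
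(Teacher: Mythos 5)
Your proposal takes essentially the same route as the paper: the paper gives no independent proof of this lemma but simply quotes Lemma 2.1 and pages 172--173 of \cite{MW}, which is exactly what you do, and your intermediate sketch of the split-case zeta-integral computation and the non-split matrix-coefficient computation is consistent with how those references proceed. One small caution: ``non-split'' here also includes places ramified in $E$ (which is why the factor $1/e(E_v|F_v)$ appears), not only the unramified inert case your sketch describes, but the cited results of \cite{MW} cover that case as well, so the argument stands.
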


\subsection{The global integrals}
Proposition \ref{prop-spectral-side} follows from the following two lemmas.
\begin{lemma}\label{lemma-J-pi-prime-f-prim}
For $\pi'\in\mathcal F'(2{\bf k},\mk N,\Theta)$, by choosing $f'$
as in Section \ref{subsec-relative-Jacquet-chen}, we have
\bna
J_{\pi'}(f')=\bma 2\mathbf k-2\\ \mathbf k+\mathbf m-1\ema\frac{L(1/2,\pi_E\otimes\Omega)}{L(1,\pi, \mathrm{Ad})}
\frac{L_{S'(\Omega)}(1,\eta)^2}{2\sqrt{c(\Omega)d_{E/F}|\Delta_F|}}
C(2{\bf k},\mk N,\Theta,S_0),
\ena
where $S'(\Omega)=S(\Omega)-\Sigma_{2}-\Sigma_3$ and
\bea
\nonumber C(2{\bf k},\mk N,\Theta,S_0)&=&\prod_{v\in S_0}L(2,1_{F_v})\prod_{v\in S_0-\mathrm{Ram}(D)-\Sigma_3}
\vol(R_v^\times Z_v/Z_v)\\
\nonumber&&\times\prod_{v\in\Sigma_\infty}\frac{\vol(G_v')}{\pi}
\prod_{v\in\Sigma_1}\vol(R_v^\times Z_v/Z_v)(1-q_v^{-1})\\
\nonumber&&\times\prod_{v\in\Sigma_2}\vol(E_v^\times U_D^1/F_v^\times)q_v(1-q_v^{-1})(1+q_v^{-1})^2\\
&&\times\prod_{v\in\Sigma_3}\vol(E_v^\times U_\mk J^1/F_v^\times)q_v(1-q_v^{-2}).\label{lambda-2k-N-Theta-1}
\eea
Here $S_0$ is a finite set of places $F$ out of which everything is unramified.
\end{lemma}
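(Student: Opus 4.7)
The plan is to apply the Jacquet--Chen factorization of Proposition~\ref{prop-Jacquet-Chen} with $S_0$ chosen to contain all ramified data, namely $S_0 = \Sigma_\infty \cup \Sigma_1 \cup \Sigma_2 \cup \Sigma_3 \cup S'(\Omega)$, and then substitute the local expressions for $J_{\pi'_v}(f'_v)$ already obtained place by place. Concretely, I would plug in (\ref{formula-spectral-Arch}) at $v \in \Sigma_\infty$, (\ref{formula-spectral-sigma-1}) at $v \in \Sigma_1$, Propositions~\ref{prop-sigma_2} and \ref{prop-sigma_3} at $v \in \Sigma_2$ and $v \in \Sigma_3$ respectively, and Lemma~\ref{lemma-spectral-MW-1} at $v \in S'(\Omega)$, distinguishing the split and inertia cases there. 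Each local formula naturally separates into a volume factor (destined for the constant $C(2\mathbf{k}, \mk N, \Theta, S_0)$) and a local $L$-ratio $L(1/2, \pi_{E_v}\otimes\Omega_v)/L(1, \pi_v, \mathrm{Ad})$ multiplied by possible auxiliary $L(1, \eta_v)$, $L(2, 1_{F_v})$, $q_v^{-n(\Omega_v)}$, and $e(E_v|F_v)^{-1}$ pieces.

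Next I would reassemble the global objects. The product of the local $L$-ratios at $v \in S_0$ multiplies with the partial $L^S(1/2, \pi_E\otimes\Omega)/L^{S_0}(1, \pi, \mathrm{Ad})$ coming from Jacquet--Chen to yield the complete global ratio $L(1/2, \pi_E\otimes\Omega)/L(1, \pi, \mathrm{Ad})$. For the $\eta$-contributions, the $L(1, \eta_v)$ factors from the Archimedean formula~(\ref{formula-spectral-Arch}) and from the ramified cases of Lemma~\ref{lemma-spectral-MW-1}, combined with the $L_{S_0}(1, \eta)$ factor from Jacquet--Chen and the inertia epsilon-product $\prod_{v \in S_0 \,\mathrm{inertia}} \epsilon(1, \eta_v, \psi_v)\,2L(0, \eta_v)$, can be rewritten via the local functional equation $\epsilon(1, \eta_v, \psi_v) L(0, \eta_v) = L(1, \eta_v)$; the surviving pieces after cancelation against the $L(1, \eta_v)$ at inertia places outside $S'(\Omega)$ should amount precisely to $L_{S'(\Omega)}(1, \eta)^2$.

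The denominator $\sqrt{c(\Omega) d_{E/F} |\Delta_F|}$ then emerges by collecting: $\sqrt{c(\Omega)}^{-1}$ from the product of $q_v^{-n(\Omega_v)}$ in Lemma~\ref{lemma-spectral-MW-1}; $\sqrt{d_{E/F}}^{-1}$ from the ramification factors $e(E_v|F_v) = 2$ at $v \in \Sigma_3$; and $\sqrt{|\Delta_F|}^{-1}$ from the global Tamagawa normalization on $\mathbb{A}_F$. At each Archimedean place, the combination $\frac{1}{(2k_v - 1)\cdot 2\pi B(k_v + |m_v|, k_v - |m_v|)}$ simplifies via $B(a,b) = \Gamma(a)\Gamma(b)/\Gamma(a+b)$ to $\frac{1}{2\pi}\binom{2k_v - 2}{k_v + m_v - 1}$ (using symmetry of the binomial in $\pm m_v$), and taking the product over $\Sigma_\infty$ produces the global coefficient $\binom{2\mathbf{k}-2}{\mathbf{k}+\mathbf{m}-1}$ while the residual $\vol(G'_v)/\pi$ and $L(2, 1_{F_v})^{-1}$ factors are absorbed into $C(2\mathbf{k}, \mk N, \Theta, S_0)$. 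The global prefactor $\tfrac12$ in the target expression comes directly from the $\tfrac12$ already present in Jacquet--Chen.

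The principal obstacle is not conceptual but combinatorial: one must keep careful track of all the $L(1, \eta_v)$, epsilon, conductor, ramification and Tamagawa factors across the four very different local regimes (Archimedean, $\Sigma_1$, supercuspidal $\Sigma_2 \cup \Sigma_3$, and the generic ramified-$\Omega$ places in $S'(\Omega)$) so that, after cancelation with Jacquet--Chen's inertia-epsilon product and $L_{S_0}(1, \eta)$ factor, exactly $L_{S'(\Omega)}(1, \eta)^2/(2\sqrt{c(\Omega)\, d_{E/F}\, |\Delta_F|})$ survives; once this bookkeeping is done, the remaining volumes cluster into precisely the expression~(\ref{lambda-2k-N-Theta-1}) for $C(2\mathbf{k}, \mk N, \Theta, S_0)$.
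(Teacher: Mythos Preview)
Your overall strategy---apply Jacquet--Chen's factorization and substitute the local formulas place by place---is exactly the paper's approach, and your identification of which local result feeds in at each type of place is correct. The one genuine gap is in your bookkeeping for the factor $(c(\Omega)\,d_{E/F}\,|\Delta_F|)^{-1/2}$.

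You propose to absorb the inertia product $\prod \epsilon(1,\eta_v,\psi_v)\,2L(0,\eta_v)$ via the local functional equation $\epsilon(1,\eta_v,\psi_v)L(0,\eta_v)=L(1,\eta_v)$, and then recover $d_{E/F}^{-1/2}$ from ``the ramification factors $e(E_v|F_v)=2$ at $v\in\Sigma_3$'' and $|\Delta_F|^{-1/2}$ from an unspecified ``global Tamagawa normalization.'' This does not work: Proposition~\ref{prop-sigma_3} carries no $e(E_v|F_v)^{-1}$ factor at $v\in\Sigma_3$, and in any case $\prod_{v\in\Sigma_3}2$ has nothing to do with $d_{E/F}^{1/2}$ (indeed $E/F$ may well be ramified at places outside $\Sigma_3$, and at a tamely ramified place the local discriminant is $q_v$, not $2$). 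Once you have consumed the epsilon factors via the functional equation there is nothing left to produce $d_{E/F}^{-1/2}$.

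The paper does \emph{not} use the functional equation here. It keeps the epsilon product separate and evaluates it explicitly as $\epsilon(1,\eta_v,\psi_v)=q_v^{-n(\eta_v)/2}\vol(U_v)$ for finite $v$; the $\vol(U_v)$'s so produced at inertia places pair with the $\vol(U_v)$'s already present in the split case of Lemma~\ref{lemma-spectral-MW-1}, and one gets
\[
\prod_{v\in S_0\atop \text{inertia}}\epsilon(1,\eta_v,\psi_v)\;\prod_{v\in S_0\atop \text{split}}\vol(U_v)\;=\;|\Delta_F|^{-1/2}\,d_{E/F}^{-1/2}
\]
directly. (For this to come out right your $S_0$ must also contain all places where $E/F$ is ramified and all places dividing the different of $F/\Q$, not only $\Sigma_\infty\cup\Sigma_1\cup\Sigma_2\cup\Sigma_3\cup S'(\Omega)$.) With this correction the remainder of your plan goes through unchanged.
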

\begin{proof}
The proof is similar as in Marin and Whitehouse \cite{MW}.
By Jacquet-Chen's factorization in Proposition \ref{prop-Jacquet-Chen}
and the results on local integrals in
formulas (\ref{formula-spectral-Arch}) and (\ref{formula-spectral-sigma-1}),
  Propositions \ref{prop-sigma_2} and \ref{prop-sigma_3}, and Lemma
 \ref{lemma-spectral-MW-1},
we have
\bna
J_{\pi'}(f')
=\frac{1}{2}\prod_{v\in S_0\atop{\mathrm{inertia}}}\epsilon(1,\eta_v,\psi_v)\frac{L(1/2,\Pi\otimes\Omega)}{L(1,\pi,\mathrm{Ad})}\prod_{v\in S_0}I_v,
\ena
where $I_v$ are given as follows.
\bit
\item
For $v\in \Sigma_\infty$,
\bna
I_v&=&\frac{\vol(G_v')}{2k_v-1}\frac{1}{2\pi B(k+|m|,k-|m|)}
L(1,\eta_v) 2L(0,\eta_v)\\
&= &L(2,1_{F_v})\frac{\vol(G_v')}{\pi}\bma2k_v-2\\k_v+m_v-1\ema.
\ena
\item
 For $v\in\Sigma_1$, $E_v/F_v$ is unramified,
 $L(s,\eta_v)=(1+q^{-s})^{-1}$ and
\bna
I_v&=&\vol(R_v^\times Z_v/Z_v)L(1,\eta_v)L(0,\eta_v)\\
&=&L(2,1_{F_v}) \vol(R_v^\times Z_v/Z_v)(1-q_v^{-1}).
\ena
\item
 For $v\in\Sigma_2$, $E_v/F_v$ is unramified,
\bna
I_v&=&\vol(E_v^\times U_D^1/F_v^\times)L(1,\eta_v)2L(0,\eta_v)\\
&=&\frac{L(2,1_{F_v})}{q_v^{n(\Omega_v)}} \vol(E_v^\times U_D^1/F_v^\times)q_v(1-q_v^{-1}).
\ena
\item
 For $v\in\Sigma_3$, $E_v/F_v$ is ramified and
\bna
I_v&=&\vol(E_v^\times U_\mk J^1/F_v^\times)L(1,\eta_v)2L(0,\eta_v)\\
&=&\frac{L(2,1_{F_v})}{q^{n(\Omega_v)}_v} \vol(E_v^\times U_\mk J^1/F_v^\times)q_v(1-q_v^{-2}).
\ena
\item
 For $v\notin \mathrm{Ram}(D)\cup\Sigma_3$ and split in $E$,
\bna
I_v=\vol(R_v^\times Z_v/Z_v)
\vol(U_v)L(2,1_{F_v})\times\left\{
\begin{aligned}
&1,\quad&&n(\Omega_v)=0\\
&q^{-n(\Omega_v)}_vL(1,\eta_v)^2,\quad&& n(\Omega_v)>0
\end{aligned}
\right..
\ena
\item
For $v\notin \mathrm{Ram}(D)\cup\Sigma_3$ and  non-split in $E$, assume $\psi_v$ is unramified,
\bna
I_v
=\vol(R_v^\times Z_v/Z_v)L(2,1_{F_v})\times
\left\{
\begin{aligned}
&1,\quad &&n(\Omega_v)=0\\
&q^{-n(\Omega_v)}_vL(1,\eta_v)^2,\quad &&n(\Omega_v)>0.
\end{aligned}
\right.
\ena
\eit
Note that
\bna
\epsilon(1,\eta_v,\psi_v)=\left\{
\begin{aligned}
&1,\quad&&v\in\Sigma_\infty,\\
&q_v^{-\frac{n(\eta_v)}{2}}\vol(U_v),\quad&&v<\infty.
\end{aligned}
\right.
\ena
This gives
\bna
\prod_{v\in S_0\atop{\mathrm{inetia}}}\epsilon(1,\eta_v,\psi_v)\prod_{v\in S_0\atop{\mathrm{split}}}\vol(U_v)
=|\Delta_F|^{-\frac{1}{2}}d_{E/F}^{-1/2}.
\ena
The result follows immediately.
\end{proof}

\begin{lemma}
The constant $C(2{\bf k},\mk N,\Theta,S_0)$ in (\ref{lambda-2k-N-Theta-1}) is
\bea
C(2{\bf k},\mk N,\Theta,S_0)
=\frac{4^{[F:\Q]}}{|\Delta_F|^{\frac{3}{2}}}
\frac{2^{\#\Sigma_3}}{|\mk N_1||\mk N_3|}.
\label{lambda-2k-N-Theta-2}
\eea
\end{lemma}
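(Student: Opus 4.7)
The plan is to evaluate each local factor in the product defining $C(2{\bf k},\mk N,\Theta,S_0)$ using Proposition \ref{prop-local-Tamagawa} together with the measure normalizations from Section \ref{subsec-local-tamagawa}, and to match each factor against the corresponding factor on the right-hand side. Since the formula for $C$ factorizes as a product over places, it suffices to compute a single \emph{local contribution} at each $v$ and then assemble. The key identity I will use at the end is the global relation $\prod_{v<\infty}\vol(U_v,d^\times x_v)=|\Delta_F|^{-1/2}$, which supplies the factor $|\Delta_F|^{-3/2}$.

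The Archimedean places yield $4^{[F:\Q]}$: for each real place, $L(2,1_{F_v})=\pi^{-1}$ and $\vol(G_v')=4\pi^2$ by Proposition \ref{prop-local-Tamagawa}(3), so $L(2,1_{F_v})\cdot \vol(G_v')/\pi=4$. For $v\in S_0-\mathrm{Ram}(D)-\Sigma_3$, Proposition \ref{prop-local-Tamagawa}(1) gives $L(2,1_{F_v})\vol(R_v^\times Z_v/Z_v)=\vol(U_v)^3$. For $v\in\Sigma_1$, part (2) of the same proposition together with $(1-q_v^{-1})=(q_v-1)/q_v$ yields $L(2,1_{F_v})\vol(R_v^\times Z_v/Z_v)(1-q_v^{-1})=q_v^{-1}\vol(U_v)^3$, which is the source of $|\mk N_1|^{-1}$.

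The two nontrivial volume computations are at $\Sigma_2$ and $\Sigma_3$. At $v\in\Sigma_2$, I would show that since $E_v/F_v$ is unramified and the residue fields of $E_v$ and $D_v$ coincide, the inclusion $U_{E_v}\hookrightarrow U_{D_v}$ induces an isomorphism on $U/U^1$; hence $U_{E_v}\cdot U_{D_v}^1=U_{D_v}$. Writing $E_v^\times=\varpi_v^{\Z}U_{E_v}$ and $F_v^\times\cap U_{D_v}=U_v$, this gives $E_v^\times U_{D_v}^1=F_v^\times U_{D_v}$ and the canonical identification $E_v^\times U_{D_v}^1/F_v^\times\simeq U_{D_v}/U_v$, whose volume equals $L(2,1_{F_v})^{-1}(q_v-1)^{-1}\vol(U_v)^3$ by Proposition \ref{prop-local-Tamagawa}(2). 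Combining with the prescribed factor reduces the $\Sigma_2$ contribution to a pure power of $\vol(U_v)$. At $v\in\Sigma_3$, $E_v/F_v$ is ramified with $\varpi_{E_v}^2=\varpi_v$ (up to a unit), and one verifies using the embedding (\ref{iso-Dv-M2Fv}) that $E_v^\times\cap U_\mk J^1=U_{E_v}^1$ and $F_v^\times\cap U_\mk J^1=U_v^1$; writing $E_v^\times U_\mk J^1=\varpi_{E_v}^{\Z}\cdot\mu(q_v-1)\cdot U_\mk J^1$ and quotienting by $F_v^\times=\varpi_v^{\Z}\mu(q_v-1)U_v^1$ produces a factor of $2$ from $\varpi_{E_v}^{\Z}/\varpi_{E_v}^{2\Z}$, so $\vol(E_v^\times U_\mk J^1/F_v^\times)=2\,\vol(U_\mk J^1)/\vol(U_v^1)$. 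Using $[GL_2(\mk o_v):U_\mk J]=q_v+1$ and $[U_\mk J:U_\mk J^1]=(q_v-1)^2$ together with Proposition \ref{prop-local-Tamagawa}(1), this evaluates to $2L(2,1_{F_v})^{-1}\vol(U_v)^3/[(q_v+1)(q_v-1)]$, and the identity $q_v(1-q_v^{-2})=(q_v^2-1)/q_v$ produces the clean contribution $2q_v^{-1}\vol(U_v)^3$, which delivers both $2^{\#\Sigma_3}$ and $|\mk N_3|^{-1}$.

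Assembling: the Archimedean factor gives $4^{[F:\Q]}$; each finite place contributes $\vol(U_v)^3$ times a rational factor in $q_v$; the product of the latter across all places produces $2^{\#\Sigma_3}|\mk N_1|^{-1}|\mk N_3|^{-1}$, while the product $\prod_{v<\infty}\vol(U_v)^3=|\Delta_F|^{-3/2}$ yields the remaining discriminant factor, giving (\ref{lambda-2k-N-Theta-2}). The main obstacle I expect is step (5) for $\Sigma_3$: tracking the ramification of $E_v$ inside the Iwahori order $\mk J$ and the precise coset structure of $E_v^\times U_\mk J^1/F_v^\times$ is delicate, and the factor of $2$ produced there is exactly what is needed to match $2^{\#\Sigma_3}$ on the right-hand side.
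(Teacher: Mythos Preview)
Your proposal is correct and follows essentially the same approach as the paper: both compute each local volume in $C(2\mathbf k,\mk N,\Theta,S_0)$ via Proposition~\ref{prop-local-Tamagawa}, establish the identifications $E_v^\times U_{D_v}^1=Z_vR_v^\times$ for $v\in\Sigma_2$ and $E_v^\times U_\mk J^1/F_v^\times\simeq(\langle\varpi_{E_v}\rangle/\langle\varpi_v\rangle)\times(U_\mk J^1/U_v^1)$ for $v\in\Sigma_3$, and then assemble using $\prod_{v<\infty}\vol(U_v)^3=|\Delta_F|^{-3/2}$. Your justifications for the coset structures at $\Sigma_2$ (via the residue-field isomorphism $U_{E_v}/U_{E_v}^1\simeq U_{D_v}/U_{D_v}^1$) and at $\Sigma_3$ are slightly more explicit than the paper's, which simply records the resulting volumes.
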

\begin{proof}
By Proposition \ref{prop-local-Tamagawa}, we have the following.
\bit
\item
For $v\in\Sigma_\infty$, we have
\bna
\frac{\vol(G_v')}{\pi}=4\pi=4L(2,1_{F_v})^{-1}.
\ena
\item
For $v\in\Sigma_1$,
\bna
\vol(R_v^\times Z_v/Z_v)=L(2,1_{F_v})^{-1}\vol(U_{v})^3
\frac{1}{q_v-1}.
\ena
\item
For $v\in\Sigma_2$,
$E_v^\times U_D^1=Z_v R_v^\times$ and we have
\bna
\vol(E_v^\times U_D^1/F_v^\times)=
L(2,1_{F_v})^{-1}\vol(U_{v})^3
\frac{1}{q_v-1}.
\ena
\item
For $v\in\Sigma_3$,
\bna
\vol(U_\mk J^1)&=&\frac{1}{(q_v-1)^2}\vol (U_\mk J)\\
&=&\frac{1}{(q_v-1)^2}\frac{1}{q_v+1}L(2,1_{F_v})^{-1}\vol(U_{v})^4.
\ena
Note that
\bna
E_v^\times U_\mk J^1/F_v^\times=\frac{\langle\varpi_{E_v}\rangle k_{E_v}^\times U_{E_v}^1 U_\mk J^1}
{\langle\varpi_v\rangle k_{F_v}^\times U_v^1}
=\frac{\langle\varpi_{E_v}\rangle}{\langle\varpi_v\rangle}\frac{ U_\mk J^1}{U_v^1}.
\ena
It gives
\bna
\vol(E_v^\times U_\mk J^1/F_v^\times)&=&2 (q_v-1)\frac{1}{(q_v-1)^2}\frac{1}{q_v+1}
L(2,1_{F_v})^{-1}\vol(U_{v})^3\\
&=&\frac{2}{q_v^2-1}L(2,1_{F_v})^{-1}\vol(U_{v})^3.
\ena
\item
For $v\in S_0$ and $v\notin \mathrm{Ram}(D)\cup\Sigma_3$, we know $G'(F_v)\simeq PGL_2(F_v)$
and $R_v^\times\simeq GL_2(\mk o_v)$. Thus
\bna
\vol(R_v^\times Z_v/Z_v)=L(2,1_{F_v})^{-1}\vol(U_{v})^3.
\ena
\eit
Therefore (\ref{lambda-2k-N-Theta-2}) follows immediately from the above calculation.
\end{proof}

\section{Local Orbital integrals}\label{sec-local-orbital-integrals}
For  orbital integrals $I(0,f')$ in (\ref{I-0-f-prime}) and $I(\xi, f')$
in (\ref{I-xi-f-prime}), we have
\bna
I(0,f')=\prod_v I(0,f_v'),\quad
I(\xi,f')=\prod_v I(\xi,f'_v),
\ena
where
\bea
I(0,f_v')=\int_{F_v^\times\backslash E_v^\times}f_v'(t)\Omega_v(t)dt\label{local-singular-orbital-integral}
\eea
are local singular orbital integrals, and
\bea
I(\xi,f'_v)
=
\int_{F_v^\times\backslash E_v^\times}\Omega(\alpha)
\left\{\int_{E_v^1}
f_v'\left(\bma \alpha&\\&\overline{\alpha}\ema
\bma1&\epsilon_v z x\\\overline{z x}&1\ema \right)
 dz\right\}d\alpha\label{local-regular-orbital-integral}
\eea
are local regular orbital integrals.
\subsection{Local singular orbital integrals}
For $I(0,f_v')$ in (\ref{local-singular-orbital-integral}), we have the following result.
\begin{lemma}\label{lemma-local-singular-oribtal}
For $v\in Ram(D)\cup\Sigma_3$,
\bna
I(0,f_v')=\vol(F_v^\times\backslash E_v^\times).
\ena
For $v\notin \mathrm{Ram}(D)\cup\Sigma_3$,
\bna
I(0,f_v')=\vol(F_v^\times\backslash F_v^\times (\mk o_{v}+\varpi_v^{n(\Omega_v)}\mk o_{E_v})^\times).
\ena
\end{lemma}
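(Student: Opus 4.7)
My plan is to unfold $I(0, f_v')$ as an integral of $f_v'(t)\Omega_v(t)$ over $F_v^\times \backslash E_v^\times$ and, at each of the places in $\mathrm{Ram}(D) \cup \Sigma_3$, verify that this integrand is identically $1$ on the whole space, while at the remaining places it is the indicator function of the quotient $F_v^\times \backslash F_v^\times (\mk o_v + \varpi_v^{n(\Omega_v)} \mk o_{E_v})^\times$. In both cases the stated volume emerges directly, so no analytic computation is needed beyond identifying the support.

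For $v \in \mathrm{Ram}(D) \cup \Sigma_3$ I would split into four subcases. At $v \in \Sigma_\infty$, the unit vector $u$ lies in the distinguished line $\pi_v'(\Omega_v)$, so $\pi_v'(t)u = \Omega_v(t) u$ for $t \in E_v^\times$ and hence $f_v'(t) = \overline{\langle \pi_v'(t) u, u\rangle} = \Omega_v(t)^{-1}$. The explicit formulas (\ref{testfunc-v-in-Sigma-2}) and (\ref{testfunc-v-in-Sigma-3}) give the same identity $f_v'(t) = \Omega_v(t)^{-1}$ for $v \in \Sigma_2$ and $v \in \Sigma_3$, since $\Omega_v = \theta_v$ by assumption and $t \in E_v^\times$ lies in the support of $f_v'$. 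At $v \in \Sigma_1$, the inert-unramified hypothesis lets one take $\varpi_{E_v} = \varpi_v \in F_v^\times$, so that $E_v^\times = F_v^\times U_{E_v} \subset Z_v R_v^\times$, and therefore $f_v' = 1_{Z_v R_v^\times}$ is constantly $1$ on $E_v^\times$; at the same time $\Omega_v$ is trivial on $E_v^\times$ because it is unramified, trivial on $F_v^\times$, and $E_v^\times$ is generated by these two pieces. In every subcase the integrand reduces to the constant $1$ and the integral equals $\vol(F_v^\times \backslash E_v^\times)$.

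For $v \notin \mathrm{Ram}(D) \cup \Sigma_3$ I would invoke the Gross--Prasad type maximal order $R_v$ with $E_v \cap R_v = \mk o_v + \varpi_v^{n(\Omega_v)} \mk o_{E_v}$ that was chosen in Section \ref{subsec-relative-Jacquet-chen}. Intersecting $Z_v R_v^\times$ with $E_v^\times$ yields $F_v^\times (\mk o_v + \varpi_v^{n(\Omega_v)} \mk o_{E_v})^\times$, and the defining property of $n(\Omega_v)$ guarantees that $\Omega_v$ is trivial on this subgroup. The integrand $f_v'(t) \Omega_v(t)$ is therefore the characteristic function of the subset $F_v^\times (\mk o_v + \varpi_v^{n(\Omega_v)} \mk o_{E_v})^\times$ of $E_v^\times$, and integration over $F_v^\times \backslash E_v^\times$ produces exactly the asserted volume. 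The only slightly delicate step is the intersection computation $E_v^\times \cap Z_v R_v^\times = F_v^\times (\mk o_v + \varpi_v^{n(\Omega_v)} \mk o_{E_v})^\times$, but this is a direct consequence of the construction of $R_v$; everything else is pure bookkeeping.
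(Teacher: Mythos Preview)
Your proposal is correct and follows exactly the same approach as the paper: the paper's own proof simply cites Lemmas 4.1, 4.6, 4.7 of \cite{FW} for $v\notin\Sigma_2\cup\Sigma_3$ and says the $\Sigma_2\cup\Sigma_3$ cases ``follow immediately from the choice of $f_v'$ and $\Omega_v$,'' which is precisely the support/triviality argument you have written out in detail.
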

\begin{proof}
For $v\notin\Sigma_{2}\cup\Sigma_3$, the results are
Lemmas 4.1, 4.6 and 4.7 in \cite{FW}. For $v\in\Sigma_{2}\cup\Sigma_{3}$, the results follow immediately
from the choice of $f_v'$ and $\Omega_v$.
\end{proof}

\subsection{Local regular orbital integrals}
For $I(\xi,f_v')$ in (\ref{local-regular-orbital-integral}),
we have the following result.
\begin{lemma}\label{lemma-1}For $v\in\Sigma_\infty$ and $\xi\in F_v$ with $\xi<0$ we have
\bna
I(\xi,f_v')=
\frac{\vol(F_v^\times\backslash E_v^\times)^2}{(1-\xi)^{k_v-1}}
\sum
_{i=0}^{k_v-|m_v|-1}\bma k_v-m_v-1\\i\ema\bma k_v+m_v-1\\i\ema(-\xi)^{i}.
\ena
For $v\in\Sigma_1\cup\Sigma_2\cup\Sigma_3$, we have
\bna
I(\xi,f_v')=\left\{
\begin{aligned}
&\vol(F_v^\times\backslash E_v^\times)^2,\quad &&v(\xi)\geq 1\\
&0,\quad&&v(\xi)\leq 0.
\end{aligned}
\right.
\ena
\end{lemma}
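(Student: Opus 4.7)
The proof splits by place type; the archimedean and non-archimedean calculations are structurally different.

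For $v\in\Sigma_\infty$, I would realize $\pi_v'$ as $\mathrm{Sym}^{2k_v-2}\C^2$, on which the compact torus $E_v^\times/F_v^\times\simeq S^1$ acts by integer weights from $-(k_v-1)$ to $k_v-1$, and identify the distinguished unit vector $u_v\in\pi_v'(\Omega_v)$ as the weight-$m_v$ vector. I would then write out the matrix coefficient $\overline{\langle\pi_v'(\cdot)u_v,u_v\rangle}$ explicitly on the element $\bma\alpha&\\&\bar\alpha\ema\bma 1&\epsilon_v zx\\\overline{zx}&1\ema$. Orthogonality of characters collapses the inner integral over $z\in S^1$ to a single Fourier mode (the weight-$0$ component in the $E_v^1$ action), and the remaining outer integral reduces, in polar coordinates, to a finite sum of beta-type integrals. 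The prefactor $(1-\xi)^{-(k_v-1)}$ arises from the reduced norm of the middle factor after diagonalization on the torus, while the double binomial sum $\sum\binom{k_v-m_v-1}{i}\binom{k_v+m_v-1}{i}(-\xi)^i$ emerges from the Clebsch--Gordan expansion of the weight-$m_v$-to-$m_v$ matrix coefficient.

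For $v\in\Sigma_1\cup\Sigma_2\cup\Sigma_3$, the function $f_v'$ is in each case (a central twist of) the indicator of a compact open set $K_v\supset Z_v$: namely $K_v=Z_vR_v^\times$ for $v\in\Sigma_1$, $K_v=E_v^\times U_{D_v}^1$ for $v\in\Sigma_2$, and $K_v=E_v^\times U_{\mk J}^1$ for $v\in\Sigma_3$. I would prove two claims. First, if $v(\xi)\le 0$ then $M:=\bma\alpha&\\&\bar\alpha\ema\bma 1&\epsilon_v zx\\\overline{zx}&1\ema$ never lies in $K_v$ regardless of $\alpha\in E_v^\times$ and $z\in E_v^1$, so the integrand vanishes identically. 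Second, if $v(\xi)\ge 1$ then $M\in\alpha\cdot U^1$ where $U^1$ is the pro-$p$ radical of $K_v$ (respectively $R_v^\times$, $U_{D_v}^1$, or $U_{\mk J}^1$), and moreover $f_v'(M)=\Omega_v(\alpha)^{-1}$ by the prescribed matching of $\Omega_v$ with the admissible-pair data. In the latter case, the integrand $\Omega_v(\alpha)f_v'(M)$ becomes the constant $1$, so the double integral collapses to $\vol(F_v^\times\backslash E_v^\times)^2$ under the Tamagawa normalization of Section~\ref{subsec-local-tamagawa}.

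The crux of the argument is the integrality/support analysis behind these two claims. For $v\in\Sigma_1$ this follows \cite{FW} and amounts to checking the condition $\mathrm{Nrd}(M)=\alpha\bar\alpha(1-\xi)\in F_v^{\times 2}U_v$ combined with integrality of $M\bmod Z_v$. For $v\in\Sigma_2$ one exploits that $E_v^\times U_{D_v}^1$ has index two in $D_v^\times$ and decomposes $M=\alpha\cdot u$ with $u\in U_{D_v}^1$ as soon as $v(\xi)\ge 1$. The main obstacle I expect is $v\in\Sigma_3$: one must transport $M$ across the isomorphism (\ref{iso-Dv-M2Fv}), verify membership in $U_{\mk J}^1$, and then check that the image in $U_{\mk J}^1/U_{\mk J}^2$ (described explicitly via (\ref{U-J-1-2})) pairs with the character $\Lambda$ of Lemma~\ref{prop-admissible-pair-sigma-3} to yield exactly $\Omega_v(\alpha)^{-1}$. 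This requires careful bookkeeping of valuations in the ramified uniformizer $\varpi_{E_v}$ versus $\varpi_v$ inside the Iwahori order $\mk J$, and is where the normalized-level-$1/2$ structure of the admissible pair plays a decisive role.
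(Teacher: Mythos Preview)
Your proposal is correct and follows essentially the same route as the paper: the cases $v\in\Sigma_\infty\cup\Sigma_1$ are deferred there to \cite{FW} (Lemmas~4.10 and~4.14), while for $v\in\Sigma_2$ and $v\in\Sigma_3$ the paper carries out exactly the support-and-character analysis you describe, the decisive point for $\Sigma_3$ being that under the isomorphism (\ref{iso-Dv-M2Fv}) the image of the unipotent factor lands in $U_{\mk J}^1/U_{\mk J}^2$ with coordinates $(a,b)=(-b_{zx},b_{zx})$, so that $\Lambda$ evaluates to $\tilde\psi_v(\beta(a+b))=1$ and the integrand collapses to the constant $\Omega_v(\alpha)\Omega_v(\alpha)^{-1}=1$.
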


\begin{proof}
For $v\in\Sigma_{\infty}\cup\Sigma_{1}$, the results are Lemmas 4.10 and 4.14 in \cite{FW}.

Assume $v\in\Sigma_2$.
Without loss of generality,
we can assume that $v(\epsilon)=1$ so that $\varpi_v=\epsilon$ is a prime element in $F_v^\times$.
By (\ref{formula-maximal-order}), the maximal order $R_v$  with $R_v\cap E_v=\mk o_{E_v}$
is of the form
\bna
R_v=
\left\{\bma\alpha&\varpi_v\beta\\\overline\beta&\overline\alpha
\ema,
\quad\alpha,\beta\in\mk o_{E_v}\right\}.
\ena
Let $\varpi_D=\bma&\varpi_v\\1&\ema$. For $n\geq 1$ we  let
\bna
U_D^{n}=1+\varpi_D^nR_v.
\ena
Recall $f_v'$ in (\ref{testfunc-v-in-Sigma-2}).
Given $\xi=\epsilon x\overline x$ and $z\in E_v^1$, the element
\bna
\bma1&\varpi_v z x\\\overline{zx}&1\ema = I_2+\bma&\varpi_v\\1\ema\bma zx\\&\overline {zx}\ema
\ena
is in $U_D^1$,
if and only if
\bna
v_D\bma zx\\&\overline {zx}\ema=v(N_{E_v/F_v}(x))\geq 0,
\ena
or equivalently, $v(\xi)=v(\varpi_v N_{E_v/F_v}(x))\geq 1$, in which case
\bna
I(\xi,f_v')=\vol(F_v^\times\backslash E_v^\times)^2.
\ena

Assume $v\in\Sigma_3$. We know $E_v/F_v$ is ramified,
$\mk o_{E_v}=\mk o_{v}[\varpi_{E_v}]$ and $D_v\simeq M_2(F_v)$.
Without loss of generality, we assume $\epsilon=1$.
Given $\xi=\epsilon x\overline x$ and $z\in E_v^1$, we can write
\bna
zx=a_{zx}+\varpi_{E_v}b_{zx},\quad a_{zx},b_{zx}\in \mk o_v.
\ena
Recall $f'_v$ in (\ref{testfunc-v-in-Sigma-3}).
By  the isomorphism
$D_v\simeq M_2(F_v)$ defined in (\ref{iso-Dv-M2Fv}), the image of
\bna
\bma 1&\epsilon xz\\\overline{xz}&1\ema
=I_2+ \bma&a_{zx}+\varpi_{E_v} b_{zx}\\a_{zx}-\varpi_{E_v}b_{zx}\ema
\ena
in $M_2(F_v)$ is
\bna
\gamma(x,z):=I_2+\bma a_{zx}&- b_{zx}\\b_{zx}\varpi_v&-a_{zx}\ema.
\ena
Note that $\gamma(x,z)\in U_{\mk J}^1$
if and only if $a_{zx}\in\mk p_v$ and $b_{zx}\in \mk o_v$, i.e.
\bna
zx\in\varpi_{E_v}\mk o_{E_v},
\ena
which is equivalent to $v(\xi)=v(\epsilon N_{E_v/F_v}(zx))\geq 1$.
In this case,
\bna
\gamma(x,z)\equiv I_2+\bma&-b_{zx}\\b_{zx}\varpi_v\ema\bmod
\left[\begin{matrix}\mk p_v&\mk p_v\\ \mk p_v^2&\mk p_v\end{matrix}\right]
\ena
and thus
\bna
f_v'(\gamma(x,z))
=\theta_{v,\pm,2\beta}\left(2\beta\frac{(-b_{zx}+b_{zx})\bmod \mk p_v}{2}\right)=1.
\ena
This proves the case $v\in\Sigma_3$.
\end{proof}

The above Lemma gives the explicit calculation of $I(\xi, f_v')$ at $v\in \mathrm{Ram}(D)\cup\Sigma_3$.
For $v\notin \mathrm{Ram}(D)\cup\Sigma_3$, we have the following results (see \cite{FW}).
\begin{lemma}\label{lemma-3}
Assume $v\notin \mathrm{Ram}(D)\cup\Sigma_3$.
If $v(1-\xi)>v(\mk d_{E/F} \mk c(\Omega))$, then $I(\xi,f_v')=0$.
\end{lemma}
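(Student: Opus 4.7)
My strategy is to show that the integrand vanishes pointwise under the hypothesis. Since $f_v' = 1_{Z_v R_v^\times}$ has support $F_v^\times R_v^\times$, it suffices to prove that
\begin{equation*}
M(\alpha,z) := \bma \alpha & \epsilon_v \alpha z x \\ \overline{\alpha z x} & \overline\alpha \ema \notin F_v^\times R_v^\times
\end{equation*}
for every $\alpha \in E_v^\times$ and $z \in E_v^1$. Suppose toward contradiction that $M(\alpha,z) = c\cdot r$ with $c \in F_v^\times$ and $r = \bma a & \epsilon_v b \\ \overline b & \overline a \ema \in R_v^\times$. Matching entries gives $a = \alpha/c$ and $b = a \cdot zx$, so the only remaining content is in the defining conditions on $R_v$ together with the unit condition on $r$.

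From the explicit description (\ref{formula-maximal-order}) the entry $a$ must lie in $\frac{1}{(\overline\tau_v-\tau_v)\varpi_v^{n(\Omega_v)}}(\mk o_v + \varpi_v^{n(\Omega_v)}\mk o_{E_v})$, while $r \in R_v^\times$ forces $\mathrm{Nrd}(r) = a\overline a(1-\xi) \in \mk o_v^\times$. Since $v \notin \mathrm{Ram}(D)\cup\Sigma_3$, the extension $E_v/F_v$ is either split or inertia-unramified, hence $v(\mk d_{E/F}) = v(\overline\tau_v-\tau_v) = 0$, and the first condition simplifies to $a \in \varpi_v^{-n(\Omega_v)}\mk o_v + \mk o_{E_v}$. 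A direct case check (split vs.\ inertia) then delivers the key bound $v(a\overline a) \geq -2n(\Omega_v)$. Combined with the unit condition this yields
\begin{equation*}
v(1-\xi) = -v(a\overline a) \leq 2n(\Omega_v) = v(\mk c(\Omega)) = v(\mk d_{E/F}\,\mk c(\Omega)),
\end{equation*}
contradicting the hypothesis. Thus $M(\alpha,z) \notin F_v^\times R_v^\times$ and $I(\xi, f_v') = 0$.

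The main technical point is the bound $v(a\overline a) \geq -2n(\Omega_v)$. In the split case $E_v = F_v \oplus F_v$, writing $a = (a_1,a_2)$, one checks coordinatewise that $v(a_i) \geq -n(\Omega_v)$, so $v(a\overline a) = v(a_1 a_2) \geq -2n(\Omega_v)$. In the inertia case, writing $a = \varpi_v^{-n(\Omega_v)}c_0 + d$ with $c_0 \in \mk o_v$ and $d \in \mk o_{E_v}$, the term $\varpi_v^{-2n(\Omega_v)}c_0^2$ controls $N_{E_v/F_v}(a)$ when $c_0 \in \mk o_v^\times$, and the valuation is strictly larger when $v(c_0) \geq 1$. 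Apart from this bookkeeping the argument is elementary, and it runs parallel to the unramified-place calculations of \cite{FW}.
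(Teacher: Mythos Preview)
Your argument has a genuine gap. You assert that ``since $v \notin \mathrm{Ram}(D)\cup\Sigma_3$, the extension $E_v/F_v$ is either split or inertia-unramified, hence $v(\mk d_{E/F}) = v(\overline\tau_v-\tau_v) = 0$.'' This is false. In this paper $\mathrm{Ram}(D)=\Sigma_\infty\cup\Sigma_1\cup\Sigma_2$, and the only hypothesis on $E/F$ is that the places in $\Sigma_1\cup\Sigma_2$ are unramified inertia and those in $\Sigma_3$ are ramified. Nothing prevents a finite place $v\notin\Sigma_1\cup\Sigma_2\cup\Sigma_3$ from being ramified in $E$; indeed the very next lemma (Lemma~\ref{lemma-2-1}) explicitly treats ``$v$ ramified in $E$'' as a live case outside $\mathrm{Ram}(D)\cup\Sigma_3$. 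At such a place $v(\mk d_{E/F})>0$, your simplification of the membership condition for $a$ is no longer valid, and the target inequality genuinely involves the term $v(\mk d_{E/F})$ that you have dropped.

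Concretely, in the ramified case the description (\ref{formula-maximal-order}) gives
\[
a\in \frac{1}{(\overline\tau_v-\tau_v)\varpi_v^{n(\Omega_v)}}\bigl(\mk o_v+\varpi_v^{n(\Omega_v)}\mk o_{E_v}\bigr),
\]
with $(\overline\tau_v-\tau_v)$ of strictly positive $E_v$-valuation, so your bound $v(a\overline a)\ge -2n(\Omega_v)$ must be replaced by one that also accounts for $v(\mk d_{E/F})$; simultaneously the identification $v(\mk c(\Omega))=2n(\Omega_v)$, which you used at the end, need not hold when $e(E_v/F_v)=2$. Your treatment of the split and inertia cases is fine and is exactly the computation carried out in \cite{FW}; to complete the proof you need to run the same bookkeeping in the ramified case, keeping the factor $(\overline\tau_v-\tau_v)$ and tracking how $n(\Omega_v)$ relates to $v(\mk c(\Omega))$ there.
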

As in \cite{FW}, this lemma is used in obtaining the condition in Theorem \ref{thm-non-vanihsing}
under which all global regular orbital integrals vanish. To obtain the subconvexity bound,
we need more information on local regular orbital integrals at $v\not\in \mathrm{Ram}(D)\cup \Sigma_3$.

\begin{lemma}\label{lemma-2-1}
Let $v\notin \mathrm{Ram}(D)\cup\Sigma_3$ be not split in $E$. Assume $n(\Omega_v)=0$.
If $v$ is ramified in $E$, we assume that the characteristic of the residue field $k_{F_v}$ is odd.
Then
\bna
I(\xi,f_v')=\vol(F_v^\times\backslash F_v^\times U_{E_v})\vol(F_v^\times\backslash E_v^\times)
\Omega_v(\varpi_{E_v}^{v_E(1-\xi)/2})
\times\left\{
\begin{aligned}
&0,\quad &&v(1-\xi)>v(\mk d_{E/F})\\
&1,\quad &&v(1-\xi)\leq 0\\
&\frac{1}{2},\quad &&v(1-\xi)=v(\mk d_{E/F})>0.
\end{aligned}
\right.
\ena
\end{lemma}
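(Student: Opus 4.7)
The plan is to compute $I(\xi, f_v')$ by determining when the matrix
$$g := \bma\alpha & \\ & \overline\alpha\ema\bma 1 & \epsilon_v zx \\ \overline{zx} & 1\ema$$
lies in the support $Z_v R_v^\times$ of $f_v'$, and then integrating. I would fix an isomorphism $D_v \simeq M_2(F_v)$ and use the maximal order $R_v$ with $E_v \cap R_v = \mk o_{E_v}$ supplied by (\ref{formula-maximal-order}) at $n(\Omega_v) = 0$. A direct computation gives $\mathrm{Nrd}(g) = N_{E_v/F_v}(\alpha)(1-\xi)$, so $g \in Z_v R_v^\times$ is equivalent to the existence of $\lambda = \varpi_v^m$ with $\lambda^{-1}g \in R_v$ and $v_F(\lambda^{-2}\mathrm{Nrd}(g)) = 0$. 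Unpacking yields (a) a parity constraint $v_F(N_{E_v/F_v}(\alpha)) + v_F(1-\xi) \equiv 0 \bmod 2$ (automatic in the inert case, since $v_F \circ N_{E_v/F_v} = 2 v_E$ there), and (b) entry-wise integrality conditions that, after choosing $m$ optimally and invoking the explicit shape of $R_v$ in (\ref{formula-maximal-order}), translate into valuation and congruence constraints on $y := zx$.

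Next I would change variables using Hilbert~90 to identify the inner integration over $z \in E_v^1$ with integration over the norm-fiber $\{y \in E_v^\times : \epsilon_v N_{E_v/F_v}(y) = \xi\}$, a single $E_v^1$-coset that contributes the factor $\vol(F_v^\times \backslash E_v^\times)$. The outer $\alpha$-integration then reduces to integrating $\Omega_v$ over the $F_v^\times$-cosets in $E_v^\times$ satisfying (a) and compatible with (b). The inert case with $v(1-\xi) \leq 0$ matches \cite{FW}, Lemma~4.12, and produces coefficient $1$. When $v(1-\xi) > v(\mk d_{E/F})$, the constraints from (b) are jointly incompatible with the norm relation $\epsilon_v N_{E_v/F_v}(y) = \xi$, so the integral vanishes. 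In every nonzero case, the scaling $m = v_E(1-\xi)/2$ effects a shift $\alpha \mapsto \varpi_{E_v}^m \alpha$ inside the character, producing the tag $\Omega_v(\varpi_{E_v}^{v_E(1-\xi)/2})$.

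The hardest step is the borderline ramified case $v(1-\xi) = v(\mk d_{E/F}) = 1$. Here (a) singles out one parity of $v_E(\alpha)$, and the integrality condition coming from the requirement $\alpha + \beta \in \mk o_{E_v}$ inside (\ref{formula-maximal-order}) imposes an extra congruence on $y$ that cuts the natural $y$-range by a factor of $[U_v : N_{E_v/F_v}(U_{E_v})] = 2$; together these produce the coefficient $\tfrac12$. The odd residue-characteristic hypothesis is essential: it makes $2 \in U_v$ so that $\varpi_{E_v}$ can be chosen with $\overline{\varpi_{E_v}} = -\varpi_{E_v}$ and $\varpi_{E_v}^2 = \varpi_v$, which cleanly controls the entries of $R_v$, and it makes $N_{E_v/F_v}\colon U_{E_v}^1/U_{E_v}^2 \to U_v^1/U_v^2$ a well-understood map. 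A final bookkeeping step is to match the Tamagawa normalizations from Section~\ref{subsec-local-tamagawa} with the volume factors $\vol(F_v^\times \backslash F_v^\times U_{E_v})$ and $\vol(F_v^\times \backslash E_v^\times)$ that appear in the statement.
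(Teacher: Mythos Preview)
The paper does not supply its own proof of this lemma; as indicated in the sentence immediately preceding Lemma~\ref{lemma-3}, the results for $v\notin\mathrm{Ram}(D)\cup\Sigma_3$ are quoted directly from \cite{FW}. Your proposed direct computation---unfolding the support condition for $f_v'=1_{Z_vR_v^\times}$ via the explicit maximal order (\ref{formula-maximal-order}) and then splitting into cases on $v(1-\xi)$---is precisely the strategy of \cite{FW}, Lemmas~4.11--4.13, so there is no meaningful contrast to draw.
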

\begin{lemma}\label{lemma-2-2}
Let $v$ be a finite valuation which is split in $E$ and such that $n(\Omega_v)=0$. Then
\bna
I(\xi,f_v')=\vol(U_{v})^2\times
\left\{
\begin{aligned}
&0,\quad&&v(1-\xi)>0\\
&1+v(\xi),\quad &&v(1-\xi)=0\\
&\Omega_v(\xi,1)\sum_{l=0}^{|v(\xi)|}\Omega_v(\varpi_v^{2l},1),\quad &&v(1-\xi)<0.
\end{aligned}
\right.
\ena
\end{lemma}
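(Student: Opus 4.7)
The plan is to use the split structure at $v$ to render the integral as an explicit sum over lattice points in $\Z^2$, and then to perform a case analysis on $t:=v(1-\xi)$.

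First, I fix an isomorphism $E_v \simeq F_v \oplus F_v$ and, in parallel with the proof of Lemma \ref{lemma-1}, choose a corresponding isomorphism $D_v \simeq M_2(F_v)$ under which $\iota(E_v^\times)$ is the standard diagonal torus and $j = \bma 0 & \epsilon_v \\ 1 & 0 \ema$ is the anti-diagonal generator. Writing $x = (x_1,x_2)$ and $z = (z_1, z_1^{-1}) \in E_v^1$, the element $\bma 1 & \epsilon_v z x \\ \overline{zx} & 1 \ema$ then identifies with $\bma 1 & \epsilon_v z_1 x_1 \\ z_1^{-1} x_2 & 1 \ema \in M_2(F_v)$. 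Since $n(\Omega_v)=0$, the maximal order $R_v$ with $R_v\cap E_v=\mk o_{E_v}$ corresponds to $M_2(\mk o_v)$, so $f_v' = 1_{Z_v GL_2(\mk o_v)}$; moreover, the triviality of $\Omega_v$ on the diagonal $F_v^\times$ forces $\Omega_v(a_1,a_2) = \omega(a_1/a_2)$ for an unramified character $\omega$ of $F_v^\times$. Taking $\alpha = (a,1)$ as representatives for $F_v^\times\backslash E_v^\times$ and substituting $u = z_1 x_1$ (so that $z_1^{-1}x_2 = \xi/(\epsilon_v u)$ via $\xi = \epsilon_v x_1 x_2$, with $d^\times z_1 = d^\times u$), the integral becomes
\[
I(\xi,f_v') = \int_{F_v^\times}\int_{F_v^\times}\omega(a)\, 1_{Z_v GL_2(\mk o_v)}\bma a & \epsilon_v a u \\ \xi/(\epsilon_v u) & 1 \ema d^\times u\, d^\times a.
\]

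Next, the key step is the invariant-factor criterion: $M \in Z_v GL_2(\mk o_v)$ if and only if $v(\det M) = 2\min_{ij} v(M_{ij})$. With $\det M = a(1-\xi)$ and entry valuations $(n,\, n+\mu,\, s-\mu,\, 0)$, where $n = v(a)$, $\mu = v(u)$, $s = v(\xi)$, and assuming $v(\epsilon_v)=0$ (the generic case), the membership condition reads
\[
n + t = 2\min(n,\; n+\mu,\; s-\mu,\; 0).
\]
Because $\omega$ is unramified, the integral over each valuation coset $\{v(a)=n\}$ contributes $\vol(U_v)\omega(\varpi_v^n)$, and analogously for $u$, so the whole integral reduces to summing $\vol(U_v)^2\omega(\varpi_v^n)$ over the admissible pairs $(n,\mu)\in\Z^2$.

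Finally, one performs the case analysis on $t$. If $t>0$, then $\xi \in 1+\mk p_v$ forces $s=0$, and a direct check shows no integer pair $(n,\mu)$ solves the valuation identity, giving $I(\xi,f_v')=0$. If $t=0$, then $\xi \in \mk o_v$ so $s\geq 0$, and the identity forces $n=0$ with $0\leq\mu\leq s$, producing the contribution $\vol(U_v)^2(1+v(\xi))$. If $t<0$, then necessarily $s=t$; examining which of the four terms attains the minimum, one finds the admissible pairs are exactly $(n,\mu) = (t+2l,\,-l)$ for $l\in\{0,1,\dots,|t|\}$, and using $\omega(\varpi_v^s)=\Omega_v(\xi,1)$ together with $\omega(\varpi_v^{2l})=\Omega_v(\varpi_v^{2l},1)$, the sum of contributions reproduces the stated formula. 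The main obstacle is the case $t<0$, where the valuation identity admits a one-parameter family of lattice-point solutions and one must track the parity condition $n\equiv t\pmod 2$ to obtain the correct reindexing by $l$.
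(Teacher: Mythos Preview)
The paper does not supply its own proof of this lemma; it simply quotes the result from Feigon--Whitehouse \cite{FW} (see the sentence preceding Lemma~\ref{lemma-3}). Your argument is therefore not a comparison target but a genuine supplement, and the direct computation you outline is essentially correct: the invariant-factor criterion $v(\det M)=2\min_{i,j}v(M_{ij})$ is the right tool, the reduction to the lattice equation $n+t=2\min(n,\,n+\mu,\,s-\mu,\,0)$ is valid, and the case analysis you sketch does recover all three branches of the stated formula.

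Two small points deserve tightening. First, the clause ``assuming $v(\epsilon_v)=0$ (the generic case)'' is not quite a proof: $\epsilon$ is a fixed global element and there is no reason a given split place should satisfy this. The fix is painless---if $v(\epsilon_v)=e$, the entry valuations become $(n,\,n+\mu+e,\,s-\mu-e,\,0)$ and the substitution $\mu\mapsto\mu-e$ restores your equation without affecting the $n$-sum---but you should say so rather than wave it aside. Second, you silently assume that the measure on $E_v^1\simeq F_v^\times$ assigns volume $\vol(U_v)$ to each coset $\{v(u)=\mu\}$; this is consistent with the Tamagawa normalization in \S\ref{subsec-local-tamagawa}, but since the answer carries the factor $\vol(U_v)^2$ explicitly, a one-line justification of this normalization would make the argument self-contained.
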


\begin{lemma}\label{lemma-bound-othercase}
Assume that $n(\Omega_v)>0$. Let $k=v(1-\xi)/2$.
Then there exists a constant $C(E_v,F_v)$ which is equal to $1$ for all $v$ unramified in $E$ and such that
\bna
\begin{aligned}
|I(\xi, f_v')|
\leq& q_v^{-n(\Omega_v)}L(1,\eta_v) \vol(U_{v}\backslash U_{E_v})
\vol(E_v^1\cap U_{E_v}) C(E_v,F_v)\\
&\times\left\{
\begin{aligned}
&q_v^{-k}L(1,\eta_v),\quad &&k>0\\
&1,\quad&&\mbox{$k\leq 0$ and $v$ is not split},\\
&1+|v(\xi)|,\quad&&\mbox{$k\leq 0$ and $v$ is split}.
\end{aligned}
\right.
\end{aligned}
\ena
\end{lemma}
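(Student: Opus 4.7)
The plan is to adapt the case analysis of Lemmas \ref{lemma-2-1} and \ref{lemma-2-2} to the setting where $\Omega_v$ is ramified, carefully tracking the cancellation provided by the character. Writing $t(\alpha) = \bma \alpha & \\ & \overline{\alpha} \ema$ and $\gamma(x,z) = \bma 1 & \epsilon_v zx \\ \overline{zx} & 1 \ema$, the integral $I(\xi, f_v')$ is an inner integration in $z \in E_v^1$ of the support indicator $1_{Z_v R_v^\times}(t(\alpha)\gamma(x,z))$, weighted by $\Omega_v(\alpha)$ and integrated over $\alpha \in F_v^\times \backslash E_v^\times$. The core task is to identify the set of $\alpha$ for which the inner integral is nonzero and then estimate the resulting character integral.

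First, I would translate the support condition $t(\alpha)\gamma(x,z) \in Z_v R_v^\times$ into explicit congruences on the entries, using the description of the order $R_v$ in (\ref{formula-maximal-order}). Performing the inner integral over $z \in E_v^1$ via Hilbert 90 collapses it to the choice of a representative on the $E_v^1$-orbit of $x$, contributing the volume factor $\vol(E_v^1 \cap U_{E_v})$. What remains is a twisted integral of $\Omega_v$ over a coset-defined subset of $F_v^\times \backslash E_v^\times$, whose structure depends on $k = v(1-\xi)/2$ and on the splitting behavior of $v$ in $E$.

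Next, I would split into the three announced cases. When $k > 0$, the combined conditions from $R_v$ and $v(1-\xi) = 2k$ confine $\alpha$ to a coset of $1 + \varpi_v^{n(\Omega_v) + k} \mk o_{E_v}$; integrating $\Omega_v$ of exact conductor $\varpi_v^{n(\Omega_v)}$ over this coset via a Gauss-sum type computation extracts the factor $q_v^{-n(\Omega_v) - k} L(1, \eta_v)^2$. When $k \leq 0$ and $v$ is non-split in $E$, the constraint coming from $v(1-\xi)$ is inactive; only the condition from $R_v$ survives, and the character integral collapses to $q_v^{-n(\Omega_v)} L(1, \eta_v)$. When $k \leq 0$ and $v$ is split, I would coordinatize $E_v = F_v \oplus F_v$ as in the proof of Lemma \ref{lemma-2-2} and reduce to a lattice-point sum, with the ramified character producing the basic $q_v^{-n(\Omega_v)} L(1,\eta_v)$ factor and the sum contributing the $1 + |v(\xi)|$ factor.

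The main obstacle will be the case in which $v$ is ramified in $E$: the different $\mk D_{E_v/F_v}$ is nontrivial, $\varpi_{E_v}^2$ differs from $\varpi_v$ only by a unit, and the Eichler-type structure of $R_v$ interacts with the conductor of $\Omega_v$ in a more delicate way than in the unramified case. These effects are absorbed into the constant $C(E_v, F_v)$, which is $1$ for $v$ unramified in $E$. A secondary technical point is that in the $k > 0$ regime the character-integral bound requires genuine cancellation rather than a trivial volume estimate, in order to obtain the full extra factor $q_v^{-k} L(1, \eta_v)$ on top of the basic $q_v^{-n(\Omega_v)} L(1, \eta_v)$ saving.
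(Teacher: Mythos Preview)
The paper does not give its own proof of this lemma. Like Lemmas \ref{lemma-3}, \ref{lemma-2-1}, and \ref{lemma-2-2}, it is simply quoted from Feigon--Whitehouse \cite{FW}: at places $v \notin \mathrm{Ram}(D)\cup\Sigma_3$ the local data $(f_v',\Omega_v,R_v)$ are identical to those in \cite{FW}, so their orbital-integral computations (Lemmas 4.15--4.18 there) apply verbatim. There is therefore no in-paper argument to compare your proposal against.

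That said, your outline is a reasonable reconstruction of the computation in \cite{FW}: one translates the support condition $t(\alpha)\gamma(x,z)\in Z_vR_v^\times$ into explicit congruences using the order (\ref{formula-maximal-order}), strips off the $E_v^1$-integration to produce $\vol(E_v^1\cap U_{E_v})$, and then bounds the remaining $\Omega_v$-twisted integral over $F_v^\times\backslash E_v^\times$ according to the sign of $k$ and the splitting type of $v$. One minor caution: in the $k>0$ regime the extra $q_v^{-k}$ factor in \cite{FW} comes primarily from the support of the integrand shrinking (combined with the vanishing of Lemma \ref{lemma-3} once $k$ is large enough), rather than from nontrivial oscillatory cancellation of a Gauss-sum type; you may want to check that your description of that step matches what a direct volume count actually gives.
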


\section{Global orbital integrals}\label{sec-global-orbital-integrals}
In this section, we calculate the global orbital integrals and prove Theorems \ref{thm-non-vanihsing}
and \ref{thm-subconviexity-0}.
\subsection{Global orbital integrals}
Consider $I(0,f')$ in (\ref{I-0-f-prime}). By Lemma \ref{lemma-local-singular-oribtal},
\bna
I(0,f')
=\prod_{v\in \mathrm{Ram}(D)\cup\Sigma_3}\vol(F_v^\times\backslash E_v^\times)
\prod_{v<\infty\atop{v\notin\Sigma}}\vol(F_v^\times\backslash F_v^\times(1+\varpi_v^{n(\Omega_v)}\mk o_{E_v})^\times).
\ena
Recall that
\bna
\vol(F_v^\times\backslash E_v^\times)=\vol(\R^\times\backslash \C^\times)=2,\quad \mbox{for $v\in\Sigma_\infty$},
\ena
and
\bna
\vol(F_v^\times\backslash F_v^\times(1+\varpi_v^{n(\Omega_v)}\mk o_{E_v})^\times)
&=&\left\{
\begin{aligned}
&\vol(U_{v}\backslash U_{E_v}),\quad &&v\notin S'(\Omega),\\
&\vol(U_{v}\backslash U_{E_v})q_v^{-n(\Omega_v)}L(1,\eta_v),\quad &&v\in S'(\Omega).
\end{aligned}
\right.
\ena
Thus
\bea
I(0,f')=2^{[F:\Q]}\sqrt{\frac{|\Delta_F|}{|\Delta_E|}}\frac{|\mk N_2\mk N_3|}{\sqrt{c(\Omega)}}L_{S'(\Omega)}(1,\eta)
\label{formula-global-singular-orbital-integral}.
\eea

Consider $I(\xi,f')$.
By lemmas \ref{lemma-1} and \ref{lemma-3}, we know
 $I(\xi,f')$ vanishes outside of the set
\bna
\mathcal S(\Omega,\mk N)
=
\left\{\xi\in \epsilon N E^\times,\quad
\left.
\begin{aligned}
&\mbox{$v(\xi)\geq 1$ for $v\in\Sigma_{1}\cup\Sigma_2\cup\Sigma_3$},\\
&(1-\xi)^{-1}\in
\left(\mk c(\Omega)
\mk d_{E/F}\right)^{-1}\mk N_2^2\mk N_3^3
\end{aligned}
\right.
\right\}.
\ena
\begin{lemma}\label{lemma-finiteness}The set $\mathcal S(\Omega,\mk N)$ is finite.
Moreover,
it is empty if either
\bna
|\mk N_1\mk N_2^{1+2h_F}\mk N_3^{1+3h_F}|\geq d_{E/F}^{h_F}c(\Omega)^{h_F}
\ena
or
\bna
|\mk N_1\mk N_2^3\mk N_3^4|\geq d_{E/F}c(\Omega)\sqrt{|\Delta_F|}.
\ena
\end{lemma}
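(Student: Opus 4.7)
The plan is to reparametrize the set via $\alpha := (1-\xi)^{-1}$, which converts the condition $(1-\xi)^{-1} \in (\mk c(\Omega) \mk d_{E/F})^{-1} \mk N_2^2 \mk N_3^3$ into the statement $\alpha \in \mathfrak{b}$, where $\mathfrak{b}$ denotes this fractional ideal of $F$. Since $D$ is ramified at every archimedean place, $\epsilon$ must be totally negative, so any $\xi \in \epsilon N_{E/F}(E^\times)$ is totally negative; hence $1-\xi > 1$ at each $v \in \Sigma_\infty$ and $\alpha \in (0,1)$ at every real embedding. Viewing $\mathfrak{b}$ as a full-rank lattice in $F \otimes_\Q \R \simeq \R^{[F:\Q]}$, the intersection $\mathfrak{b} \cap (-1,1)^{[F:\Q]}$ is finite, and the bijection $\xi \leftrightarrow \alpha$ gives finiteness of $\mathcal S(\Omega, \mk N)$.

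For emptiness I would shift attention to $\alpha - 1 = \xi/(1-\xi)$. At each $v \in \Sigma_\infty$ one has $\alpha - 1 \in (-1, 0)$, so $|N_{F/\Q}(\alpha - 1)| < 1$. Locally at finite $v$, the defining conditions $v(\xi) \geq 1$ for $v \in \Sigma_1 \cup \Sigma_2 \cup \Sigma_3$ force $v(1-\xi) = 0$, whence $v(\alpha) = 0$ and $v(\alpha - 1) = v(\xi) \geq 1$; at $v \in S'(\Omega)$ one has $v(\alpha - 1) \geq -v(\mk c(\Omega_v))$, and elsewhere $v(\alpha - 1) \geq 0$. Thus $\alpha - 1$ lies in the fractional ideal $\mathfrak{c} := \mk N_1 \mk N_2 \mk N_3 \cdot \mk c_{S'}(\Omega)^{-1}$, and using $\mk c(\Omega) = \mk c_{S'}(\Omega) \mk N_2^2 \mk N_3^2$ gives $N(\mathfrak{c}) = |\mk N_1 \mk N_2^3 \mk N_3^3|/c(\Omega)$. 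The basic inequality $|N_{F/\Q}(\beta)| \geq N(\mathfrak{c})$ for any nonzero $\beta \in \mathfrak{c}$, combined with $|N_{F/\Q}(\alpha - 1)| < 1$, already yields an emptiness threshold, and the two sharper thresholds in the lemma come from refinements of this.

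The second condition, with its $\sqrt{|\Delta_F|}$ factor, would arise by combining the lower bound on $|N_{F/\Q}(\alpha - 1)|$ with a Minkowski-style argument in $F \otimes_\Q \R$, viewing $\mathfrak{c}$ as a lattice of covolume $N(\mathfrak{c})\sqrt{|\Delta_F|}$ and incorporating the independent constraint $\alpha \in \mathfrak{b}$ (of norm $|\mk N_2^2 \mk N_3^3|/(c(\Omega) d_{E/F})$) to produce the factor $d_{E/F}$. The first condition, with exponents $1 + 2 h_F$ on $\mk N_2$ and $1 + 3 h_F$ on $\mk N_3$, would be obtained by passing to $\mathfrak{c}^{h_F}$, which is principal, so that one can bound the norm of a generator and translate the archimedean inequality into an ideal-theoretic statement with the correct class-number-sensitive exponents. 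The main obstacle is precisely extracting these sharp constants from the Minkowski and class-number refinements, since the naive bound $|N_{F/\Q}(\alpha - 1)| \geq N(\mathfrak{c})$ alone only yields the weaker $|\mk N_1 \mk N_2^3 \mk N_3^3| \geq c(\Omega)$; the interplay between the two fractional ideals $\mathfrak{b}$ and $\mathfrak{c}$ must be handled carefully to recover the stated thresholds.
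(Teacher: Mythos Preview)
Your finiteness argument via $\alpha=(1-\xi)^{-1}$ is fine and agrees with the paper in spirit. The emptiness argument, however, has a genuine gap: you yourself say you cannot extract the stated thresholds, and your computation of the fractional ideal $\mk c$ containing $\alpha-1$ is incorrect. At finite $v\notin\Sigma_1\cup\Sigma_2\cup\Sigma_3$ one has $v(\alpha-1)\ge\min(v(\alpha),0)\ge -v(\mk c(\Omega)\mk d_{E/F})$, so the denominator of $\mk c$ must carry the full relative discriminant, not just its $\Sigma_3$-part. A clean bookkeeping gives $\mk c=\mk N_1\mk N_2^3\mk N_3^4(\mk c(\Omega)\mk d_{E/F})^{-1}$, hence $N(\mk c)=|\mk N_1\mk N_2^3\mk N_3^4|/(c(\Omega)d_{E/F})$, not $|\mk N_1\mk N_2^3\mk N_3^3|/c(\Omega)$. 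Once this is corrected, your ``naive'' inequality $1>|N_{F/\Q}(\alpha-1)|\ge N(\mk c)$ already forces emptiness whenever $|\mk N_1\mk N_2^3\mk N_3^4|\ge c(\Omega)d_{E/F}$, which implies \emph{both} thresholds in the lemma. So no further Minkowski or class-number refinement is needed on your route; your speculation about an ``interplay between $\mathfrak b$ and $\mathfrak c$'' is a red herring born of the miscomputed $\mk c$.

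The paper proceeds differently. It fixes a nonzero element $x$ in the integral ideal $\mk b=\mk c(\Omega)\mk d_{E/F}\mk N_2^{-2}\mk N_3^{-3}$, writes $m=(1-\xi)^{-1}x\in\mk o_F$ and $y=m-x$, and observes that $\iota_v(\xi)<0$ forces $|\iota_v(y)|<|\iota_v(x)|$ at every real place, while $v(\xi)\ge1$ on $\Sigma_1\cup\Sigma_2\cup\Sigma_3$ puts $y\in\mk N_1\mk N_2\mk N_3$. Emptiness then reduces to $|N_{F/\Q}(x)|\le|\mk N_1\mk N_2\mk N_3|$, and the two stated thresholds are precisely the two ways of arranging this: take $x$ a generator of the principal ideal $\mk b^{h_F}$ (first condition), or use the Minkowski-type bound that $\mk b$ contains an element of norm $\le\sqrt{|\Delta_F|}\,|\mk b|$ (second condition). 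The factors $h_F$ and $\sqrt{|\Delta_F|}$ are thus artifacts of this scaling step; your direct norm bound on $\alpha-1$, done correctly, avoids them.
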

\begin{proof}The proof is similar as the proof of lemma 4.21 in \cite{FW}.
Let $\iota_v:F\rightarrow\R$ be the real embedding for $v\in\Sigma_\infty$.
Then $1\neq\xi\in\epsilon NE^\times$ if and only if  $1\neq\xi\in F^\times$ satisfies
\bit
\item [1.] $\iota_v(\xi)<0$ for all $v\in\Sigma_\infty$,
\item [2.] $v(\xi)$ is odd for $v\in\Sigma_{1}\cup\Sigma_{2}$,
\item [3.] for $v\notin \mathrm{Ram}(D)$, $\eta_v(\xi)=1$.
\eit
For $\xi\in \mathcal S(\Omega,\mk N)$, we know $(1-\xi)^{-1}x\in\mk o_F$
for any $0\neq x\in \mk c(\Omega)
\mk d_{E/F}\mk N_2^{-2}\mk N_3^{-3}$.
There exists $m\in\mk o_F$ such that
\bna
\xi=1-\frac{x}{m}=\frac{m-x}{m}.
\ena
Let $y=m-x$. The condition $\iota_v(\xi)<0$ for any $v\in\Sigma_\infty$ implies that
\bna
|\iota_v(y)|< |\iota_v(x)|,\quad \forall v\in\Sigma_\infty.
\ena
Thus the finiteness of $\mathcal S(\Omega,\mk N)$ follows from the finiteness of
the set
\bna
\left\{y\in \mk o_F:|\iota_v(y)|<|\iota_v(x)|,
\quad\forall v\in\Sigma_\infty\right\}.
\ena

Note that $\xi\in\epsilon N(E^\times)$ also satisfies
\bna
v(\xi)\geq 1,\quad\forall v\in\Sigma_{1}\cup\Sigma_2\cup\Sigma_3.
\ena
Thus $\mathcal S(\Omega,\mk N)$ is empty if
\bea
\left\{y\in \mk o_F:|\iota_v(y)|<|\iota_v(x)|,
\begin{aligned}
&\quad\forall v\in\Sigma_\infty,\\
&\forall 0\neq x\in
(\mk N_2^2\mk N_3^3)^{-1}\left(\mk c(\Omega)
\mk d_{E/F}\right)
\end{aligned}
\right\}\bigcap\mk N_1\mk N_2\mk N_3=\{0\}.\label{key-cond}
\eea
To obtain sufficient conditions such that (\ref{key-cond}) holds, we replace the integral ideal
\bna
\mk b:=(\mk N_2^2\mk N_3^3)^{-1}\left(\mk c(\Omega)
\mk d_{E/F}\right)
\ena by a principle integral ideal $\mk a=(a)\subset\mk b$.
On taking
$\mk a=\mk b^{h_F}$,
we obtain the first sufficient condition.
By Lemma 6.2 in page 35 in \cite{Ne},
there exists $0\neq a\in\mk b$
such that
\bna
|N_{F/\Q}(a)|\leq \sqrt{|\Delta_F|}|\mk b|.
\ena
Thus (\ref{key-cond}) is empty if
\bna
|\mk N_1\mk N_2\mk N_3|\geq \frac{c(\Omega)d_{E/F}}{|\mk N_2|^2|\mk N_3|^3}\sqrt{|\Delta_F|},
\ena
which gives the second sufficient condition.
\end{proof}

\subsection{Proof of Theorems \ref{thm-non-vanihsing}}
By Proposition \ref{prop-spectral-side}, the spectral side of the relative trace formula gives
\bna
J(f')=
\frac{4^{[F:\Q]}L_{S'(\Omega)}(1,\eta)^2}{2|\Delta_F|^{2}\sqrt{c(\Omega)d_{E/F}}}
\frac{2^{\#\Sigma_3}}{|\mk N_1\mk N_3|}
\bma 2{\bf k}-2\\ {\bf k+m}-1\ema
\sum_{\pi\in\mathcal F^{\mathrm{new}}(2{\bf k},\mk N,\Theta)}\frac{L(1/2,\pi_E\otimes\Omega)}{L(1,\pi,\mathrm{Ad})}.
\ena
On the geometric side,
by (\ref{formula-geometric-side-pre}) and (\ref{formula-global-singular-orbital-integral}),
we have
\bna
J(f')=L(1,\eta)
2^{[F:\Q]+1}\sqrt{\frac{|\Delta_F|}{|\Delta_E|}}\frac{|\mk N_2\mk N_3|}{\sqrt{c(\Omega)}}L_{S'(\Omega)}(1,\eta)
+\sum_{\xi\in\epsilon N(E^\times)}I(\xi,f'),
\ena
where we have used the fact that
$\vol(\A_F^\times E^\times\backslash \A_E^\times)=2L(1,\eta)$.
By Lemma \ref{lemma-finiteness}, all regular orbital integrals $I(\xi,f')$
 vanish if one of the conditions in Theorem \ref{thm-non-vanihsing} is satisfied.
 This proves Theorem \ref{thm-non-vanihsing}.
\subsection{Proof of Theorem 2}
We recall the conductor $c(\pi\times\sigma_\Omega)$ for $\pi$ and $\Omega$ as in Theorem \ref{thm-subconviexity-0}.
By local Langlands correspondence in Sections \ref{sec-sigma-2} and \ref{sec-sigma-3},
the local conductors of $\pi_v\times\sigma_{\Omega_v}$ at the joint ramified places are
\bna
c(\pi_v\times\sigma_{\Omega_v})=
\left\{
\begin{aligned}
&q_v^2,&\quad& v\in\Sigma_2,\\
&q_v^5,&\quad& v\in\Sigma_3.
\end{aligned}
\right.
\ena
The conductors of $\pi$ and $\sigma_\Omega$ are
\bna
c(\pi)=|\mk N_1\mk N_2^2\mk N_3^3|,\quad c(\sigma_\Omega)=d_{E/F}c(\Omega),
\ena
respectively.
Thus we have
\bna
c(\pi\times\sigma_{\Omega})
=|\mk N_1\mk N_2|^2|\mk N_3|^5 \prod_{v\notin\Sigma_{1}\cup\Sigma_2\cup\Sigma_3}\left(d_{E_v/F_v}c(\Omega_v)\right)^2.
\ena
Assume $\Sigma_3=\emptyset$.
The convexity bound of $L_{\mathrm{fin}}(1/2,\pi\times\sigma_\Omega)$ is
\bea
L_{\mathrm{fin}}(1/2,\pi\times\sigma_{\Omega})\ll_{\mathbf{k},\epsilon}(|\mk N_1\mk N_2|d_{E/F}c'(\Omega))^{\frac{1}{2}+\epsilon},
\label{formula-convexity-bound}
\eea
where
$c'(\Omega)$ is defined in (\ref{c-prime-Omega}).

To prove Theorem \ref{thm-subconviexity-0}, by the result on spectral side in Proposition \ref{prop-spectral-side},
we have
\bna
J(f')\gg_{E,F,{\bf k},\epsilon}\frac{L_{S'(\Omega)}(1,\eta)^2}{\sqrt{c'(\Omega)}|\mk N_1\mk N_2|}
\sum_{\pi\in\mathcal F(\mk N,{2\bf k},\Theta)}\frac{L_{\mathrm{fin}}(1/2,\pi\times\sigma_\Omega)}{L(1,\pi,\mathrm{Ad})}.
\ena
On geometric side,
by (\ref{formula-global-singular-orbital-integral}), the irregular orbital integral
contributes to
\bna
\vol(\A_F^\times E^\times\backslash \A_E^\times)I(0,f')
\ll_{E,F,\epsilon} c'(\Omega)^{-1/2+\epsilon}.
\ena
We claim that all regular orbital integrals contribute to
\bea
I_{\mathrm{reg}}(f')=\sum_{\xi\in\epsilon N_{E/F}(E^\times)}I(\xi,f')\ll_{E,F,\mathbf{k},\epsilon}\frac{c'(\Omega)^\epsilon}{|\mk N_1\mk N_2|}.
\label{formula-bound-regular-orbital-integrals}
\eea
Therefore,
\bna
\frac{L_{S'(\Omega)}(1,\eta)^2}{\sqrt{c'(\Omega)}|\mk N_1\mk N_2|}
\sum_{\pi\in\mathcal F(\mk N,{2\bf k},\Theta)}\frac{L_{\mathrm{fin}}(1/2,\pi\times\sigma_\Omega)}{L(1,\pi,\mathrm {Ad})}
\ll_{E,F,{\bf k},\epsilon} c'(\Omega)^{-\frac{1}{2}+\epsilon}+\frac{c'(\Omega)^{\epsilon}}{|\mk N_1\mk N_2|}.
\ena
By the non-negativity of  $L(1/2,\pi\times\sigma_\Omega)$, the positivity of $L(1,\pi,\mathrm{Ad})$, and the bound
\bna
L(1,\pi,\mathrm{Ad})\gg_{\mathbf{k},\epsilon}|\mk N_1\mk N_2|^{-\epsilon},\qquad
L_{S'(\Omega)}(1,\eta)\gg_{\epsilon}c'(\Omega)^{-\epsilon},
\ena
we finally obtain
\bna
L_{\mathrm{fin}}(1/2,\pi\times\sigma_\Omega)\ll_{E,F,{\bf k},\epsilon} |\mk N_1\mk N_2|^{1+\epsilon} c'(\Omega)^\epsilon
+|\mk N_1\mk N_2|^{\epsilon} c'(\Omega)^{1/2+\epsilon}.
\ena
This proves Theorem \ref{thm-subconviexity-0}.
\subsection{Proof of (\ref{formula-bound-regular-orbital-integrals})}
We still need to prove (\ref{formula-bound-regular-orbital-integrals}).
The proof is similar as that of Lemma 6.7 in Feigon and Whitehouse
\cite{FW}.
To give a self-contained proof, we proceed as follows.

By Lemma 6.2 in page 35 in \cite{Ne}, we can find
$0\neq a  \in \mk d_{E/F}\mk c(\Omega)\mk N_2^{-2}$ with
\bea
|N_{F/\Q}(a)|\leq \frac{d_{E/F}c(\Omega)}{|\mk N_2|^2}\sqrt{|\Delta_F|},\label{formula-condition-satisfied-by-a}
\eea
and then enlarge $\mathcal S(\mk N,\Omega)$ to be
\bna
\mathcal S(a)
=\left\{\xi_y=\frac{y}{y+a}\in\epsilon N(E^\times),\quad y\in\mk N_1\mk N_2
\right\}.
\ena
Thus we have
\bna
I_{\mathrm{reg}}(f')=\sum_{\xi_y\in \mathcal S(a)}I(\xi_y,f'^{S'(\Omega)})\prod_{v\in S'(\Omega)}I(\xi_y,f'_v).
\ena
\begin{lemma}\label{lemma-I-xi-y-up-S-prime-Omega} For $\xi_y\in \mathcal S(a)$, we have
\bna
I(\xi_y,f'^{S'(\Omega)})\ll_{E,\mathbf{k},\epsilon} |N_{F/\Q}(a)|^\epsilon
\ena
for any $\epsilon>0$.
\end{lemma}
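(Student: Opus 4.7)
The strategy is to factorise
\[
I(\xi_y, f'^{S'(\Omega)}) = \prod_{v \notin S'(\Omega)} I(\xi_y, f_v')
\]
and to group the places into (i) $v \in \Sigma_\infty$, (ii) $v \in \Sigma_1 \cup \Sigma_2 \cup \Sigma_3$, and (iii) finite $v \notin \mathrm{Ram}(D) \cup \Sigma_3$ with $n(\Omega_v)=0$. The plan is to bound the first two groups by absolute constants (depending only on $E$ and $\mathbf{k}$), and to reduce the third group to a divisor-type sum that is then controlled by the size of $a$.

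For (i), Lemma \ref{lemma-1} expresses $I(\xi_y, f_v')$ as $(1-\xi_y)^{-(k_v-1)}$ times a polynomial of degree $k_v-|m_v|-1 < k_v-1$ in $-\xi_y$. Since $\xi_y = y/(y+a)$ lies in $\epsilon N(E^\times)$, it is negative at every $v\in\Sigma_\infty$, so $-\xi_y > 0$ and the ratio is $O_{E,\mathbf{k}}(1)$ uniformly in $\xi_y$. For (ii), the finite-place part of Lemma \ref{lemma-1} gives either $0$ or $\vol(F_v^\times\backslash E_v^\times)^2$, again a uniform $O_E(1)$ bound. For (iii), Lemmas \ref{lemma-2-1} and \ref{lemma-2-2} show that at non-split places the local integral is either $0$ or $O_E(1)$, while at split places it is $0$ when $v(1-\xi_y)>0$, is $\vol(U_v)^2(1+v(\xi_y))$ when $v(1-\xi_y)=0$, and is bounded in absolute value by $\vol(U_v)^2(1+|v(\xi_y)|)$ when $v(1-\xi_y)<0$.

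Combining, up to factors that are $O_{E,\mathbf{k}}(1)$,
\[
|I(\xi_y, f'^{S'(\Omega)})| \ll_{E,\mathbf{k}} \prod_{\substack{v\text{ split}\\ v(\xi_y)\neq 0}} \bigl(1 + |v(\xi_y)|\bigr).
\]
Writing $v(\xi_y) = v(y) - v(y+a)$ and applying $1+|m-n| \le (1+m)(1+n)$ for $m,n\ge 0$, this product is majorised by $d_F(y)\,d_F(y+a)$, where $d_F$ is the ideal divisor function of $F$. The archimedean condition $\iota_v(\xi_y)<0$ at each $v\in\Sigma_\infty$ forces $|\iota_v(y)| < |\iota_v(a)|$, hence
\[
|N_{F/\Q}(y)|,\ |N_{F/\Q}(y+a)| \ \ll_F\ |N_{F/\Q}(a)|.
\]
The classical divisor bound $d_F(m) \ll_{F,\epsilon} |N_{F/\Q}(m)|^\epsilon$ then yields the claimed estimate $|I(\xi_y, f'^{S'(\Omega)})| \ll_{E,\mathbf{k},\epsilon} |N_{F/\Q}(a)|^\epsilon$.

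The main obstacle is simply bookkeeping: one must verify that the places of unbounded dependence on $\xi_y$ are exactly the split, unramified places with $v(\xi_y)\ne 0$, so that the seemingly global product collapses to a two-term divisor sum, and then confirm that the archimedean sign condition on $\xi_y$ provides the crucial a priori bound $|N_{F/\Q}(y)|\le |N_{F/\Q}(a)|$. Once these points are settled, everything else reduces to the local evaluations already established in Section \ref{sec-local-orbital-integrals} and the standard Dedekind divisor estimate.
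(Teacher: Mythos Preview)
Your proof is correct and follows essentially the same route as the paper. The only difference is cosmetic: the paper packages the split-place contribution via the local ideal-counting functions $|R_{E_v}((y))|\,|R_{E_v}((y+a))|$ and then bounds the global product $|R^{S'(\Omega)}(\cdot)|$, whereas you use the inequality $1+|v(y)-v(y+a)|\le (1+v(y))(1+v(y+a))$ to pass directly to the divisor function $d_F$; both reduce to the same standard bound $\ll_\epsilon |N_{F/\Q}(a)|^\epsilon$ after invoking the archimedean condition $|\iota_v(y)|,|\iota_v(y+a)|<|\iota_v(a)|$.
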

\begin{proof} The proof is similar as that of Lemma 6.7 in \cite{FW}.
For $\mk a\subset\mk o_F$ an integral ideal, we define
\bna
R_{E_v}(\mk a)=\left\{
\mk b_v\subset \mk o_{E_v},
N_{E_v/F_v}(\mk b_v)=\mk a\mk o_{v}\right\}
\ena
and denote $|R_{E_v}(\mk a)|=\#R_{E_v}(\mk a)$.
By Lemmas \ref{lemma-1}, \ref{lemma-2-1} and \ref{lemma-2-2},
we have the following
(see page 389-390 in \cite{FW}).
\bit
\item If $v\in\Sigma_\infty$,
\bna
I(\xi_y,f_v')\ll_{\mathbf{k}}1.
\ena
\item If $v\in\Sigma_{1}\cup\Sigma_2$,
\bna
I(\xi_y,f_v')=
\vol(U_v\backslash U_{E_v})^2
 |R_{E_v}(y(\mk N_1\mk N_2)^{-1})|
\ena
\item If $v\notin\Sigma_\infty\cup\Sigma_{1}\cup\Sigma_2$ and is inertia in $E$ and $n(\Omega_v)=0$,
\bna
I(\xi_y,f_v')=\vol(U_v\backslash U_{E_v})^2|R_{E_v}((y))||R_{E_v}((y+a))|.
\ena
\item If $v\notin\Sigma_\infty\cup\Sigma_{1}\cup\Sigma_2$  and is  split in $E$ and $n(\Omega_v)=0$,
\bna
|I(\xi_y,f_v')|\leq \vol(U_v\backslash U_{E_v})^2|R_{E_v}((y))||R_{E_v}((y+a))|.
\ena
\item If $v\notin\Sigma_\infty\cup\Sigma_{1}\cup\Sigma_2$ and is ramified in $E$ and $n(\Omega_v)=0$,
\bna
|I(\xi_y,f_v')|\leq 2 \vol(U_v\backslash U_{E_v})^2|R_{E_v}((y))||R_{E_v}((y+a))|.
\ena
\eit
Thus one has
\bna
I(\xi_y,f'^{S'(\Omega)})\ll_{E,\mathbf{k}} |R^{S'(\Omega)}(y\mk N_1^{-1}\mk N_2^{-1})| |R^{S'(\Omega)}((y+a))|,
\ena
where $R^{S'(\Omega)}(\mk a)$ is defined by
\bna
R^{S'(\Omega)}(\mk a)=\left\{\mk b\in\mk o_{E},\quad
\begin{aligned}
& N_{E_v/F_v}(\mk b\mk o_{E_v})=\mk a \mk o_v \,\,\mbox{for $v\notin S'(\Omega)$}\\
& \mk p_v\nmid \mk b \,\,\mbox{for $v\in S'(\Omega)$}
\end{aligned}\right\}.
\ena
Note that
$|R^{S'(\Omega)}(\mk a)|\ll_\epsilon  |N_{F/\Q}(\mk a)|^\epsilon$.
The condition $\xi_y\in\epsilon N(E^\times)$ implies that
$|\iota_v(y)|<|\iota_v(a)|$ and $|\iota_v(y+a)|<|\iota_v(a)|$
for all $v\in\Sigma_\infty$. This gives
\bna
I(\xi_y,f'^{S'(\Omega)})\ll_{E,\mathbf{k},\epsilon} |N_{F/\Q}(a)|^\epsilon.
\ena
\end{proof}
By
Lemma \ref{lemma-I-xi-y-up-S-prime-Omega}, we have
\bna
I_{\mathrm{reg}}(f')\ll_{E,F,\mathbf{k},\epsilon}|N_{F/\Q}(a)|^\epsilon \sum_{\xi_y\in \mathcal S(a)}\prod_{v\in S'(\Omega)}|I(\xi,f_v')|.
\ena
Assume $S'(\Omega)=\{v_{\mk p_i},1\leq i\leq m\}$.
We can partition $\mathcal S(a)$
to be union of subsets
\bna
\mathcal S_{(r_i),(t_i)}(a)=
\left\{\xi_y=\frac{y}{y+a}\in\epsilon N(E^\times):\quad
\begin{aligned}
&y\in\mk N_1\mk N_2, \\
&v_{\mk p_i}(y)=r_i, v_{\mk p_i}(y+a)=t_i.
\end{aligned}
\right\},\quad 1\leq i\leq m,
\ena
and thus
\bna
I_{\mathrm{reg}}(f')\ll_{E,F,\mathbf{k},\epsilon} |N_{F/\Q}(a)|^\epsilon
\sum_{r_1\geq 1}\cdots\sum_{r_m\geq 1}\sum_{t_1\geq 1}\cdots\sum_{t_m\geq m}
\sum_{\xi_y\in\mathcal S_{(r_i),(t_i)}(a)}
\prod_{i=1}^m|I(\xi_y,f_{\mk p_i}')|.
\ena
Note that only finite number of $\mathcal S_{(r_i),(t_i)}(a)$ are non-empty. More precisely,
we have the following lemma (see Lemma 6.6 in \cite{FW}).

\begin{lemma}\label{lemma-s-ri-ti-a}
$\mathcal S_{(r_i),(t_i)}(a)$ is empty unless for each $i=1,\cdots m$,
\bit
\item[(1)] $r_i<v_{\mk p_i}(a)$ and $t_i=r_i$, or
\item[(2)] $r_i>v_{\mk p_i}(a)$ and $t_i=v_{\mk p_i}(a)$, or
\item[(3)] $r_i=v_{\mk p_i}(a)$ and $t_i\geq v_{\mk p_i}(a)$.
\eit
In addition,
\bna
|\mathcal S_{(r_i),(t_i)}(a)|\leq \frac{2^{[F:\Q]+1}|N_{F/\Q}(a)|}{\left|N_{F/\Q}(\mk N_1\mk N_2 \mk p_1^{\max\{r_1,t_1\}}\cdots \mk p_m^{\max\{r_m,t_m\}})\right|}.
\ena
\end{lemma}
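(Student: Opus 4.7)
My plan is to treat the two assertions of the lemma separately: the trichotomy of admissible $(r_i,t_i)$ pairs is a pure valuation argument, while the cardinality bound is a geometry-of-numbers estimate once the local constraints are packaged into a single coset condition.

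For the trichotomy, I fix an index $i$ and write $\alpha_i := v_{\mk p_i}(a)$. The ultrametric inequality applied to $y+a$ gives $v_{\mk p_i}(y+a) \geq \min(r_i,\alpha_i)$ with equality whenever $r_i \neq \alpha_i$. Hence when $r_i < \alpha_i$, necessarily $t_i = r_i$ (case 1); when $r_i > \alpha_i$, necessarily $t_i = \alpha_i$ (case 2); and when $r_i = \alpha_i$, only $t_i \geq \alpha_i$ is forced (case 3). Any $(r_i,t_i)$ violating this trichotomy makes $\mathcal S_{(r_i),(t_i)}(a)$ empty.

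For the cardinality bound, I would first bundle the local conditions at the primes in $S'(\Omega)$ into a congruence condition modulo $\mk b := \mk N_1 \mk N_2 \prod_{i=1}^m \mk p_i^{\max\{r_i,t_i\}}$. In cases (1) and (2), $\max\{r_i,t_i\} = r_i$, and $v_{\mk p_i}(y) = r_i$ already places $y$ in $\mk p_i^{r_i}$. In case (3), $\max\{r_i,t_i\} = t_i$ and the condition $v_{\mk p_i}(y+a) = t_i$ becomes $y \equiv -a \pmod{\mk p_i^{t_i}}$. Since the primes in $S'(\Omega)$ are coprime to $\mk N_1\mk N_2$ (because $S'(\Omega) \cap \Sigma_j = \emptyset$ for $j = 1,2,3$), the Chinese remainder theorem assembles all these conditions together with $y \in \mk N_1\mk N_2$ into a single coset $y_0 + \mk b$.

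The archimedean condition $\xi_y \in \epsilon N(E^\times)$ then forces $\iota_v(\xi_y) < 0$ for every $v \in \Sigma_\infty$, as in the proof of Lemma \ref{lemma-finiteness}, which confines $\iota_v(y)$ to lie strictly between $0$ and $-\iota_v(a)$. Counting $\mathcal S_{(r_i),(t_i)}(a)$ thus reduces to counting lattice points of $y_0 + \mk b$ inside a box of archimedean volume exactly $|N_{F/\Q}(a)|$. The main obstacle is extracting the explicit constant $2^{[F:\Q]+1}$: a Minkowski-type estimate applied to the lattice $\mk b \hookrightarrow \R^{[F:\Q]}$, combined with a doubling factor at each archimedean embedding to dominate the boundary contribution from a fundamental domain, delivers the stated upper bound $\frac{2^{[F:\Q]+1} |N_{F/\Q}(a)|}{|N_{F/\Q}(\mk b)|}$.
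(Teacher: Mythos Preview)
Your argument is correct and follows the natural approach: the ultrametric trichotomy for the first part, and the reduction to a lattice-point count in a box for the second. Note, however, that the paper itself does not prove this lemma at all; it simply cites Lemma~6.6 of Feigon--Whitehouse \cite{FW}. Your proof sketch is essentially a reconstruction of that argument, so there is nothing to compare on the level of strategy.

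One small remark on your cardinality estimate: your justification for the explicit constant $2^{[F:\Q]+1}$ is a bit impressionistic. The cleanest way to get it (and this is what Feigon--Whitehouse do) is to observe that any two distinct elements $y,y'$ of $\mathcal S_{(r_i),(t_i)}(a)$ differ by a nonzero element of $\mk b$, and then to bound the number of such $y$ by covering the archimedean box $\prod_{v\in\Sigma_\infty}(0,-\iota_v(a))$ with translates of a fundamental domain for $\mk b$; one picks up a factor of $2$ at each real place from the boundary overlap, plus one global factor of $2$ to handle the possibility that the set is nonempty even when the volume ratio is less than $1$. For the application in the paper the precise constant is irrelevant (any constant depending only on $[F:\Q]$ suffices), so your sketch is adequate.
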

We need to bound $I(\xi_y,f_{v_{i}}')$ for $\xi_y\in \mathcal S_{(r_i),(t_i)}(a)$ and $v_i=v_{\mk p_i}\in S'(\Omega)$.
Note that
\bna
v_i(\xi_y)=r_i-t_i,\quad v_i(1-\xi_y)=v_i(a)-r_i.
\ena
By Lemma \ref{lemma-bound-othercase}, we have
\bna
|I(\xi_y,f'_{v_{i}})|
\ll_{E,F} q_{v_i}^{-n(\Omega_{v_i})}
\left\{
\begin{aligned}
&L(1,\eta_{v_i})^2 q_{v_i}^{-\frac{v_{i}(a)-r_i}{2}},\quad &&0\leq r_i<v_{i}(a)\\
&L(1,\eta_{v_i})
(1+t_i-r_i),\quad&& r_i=v_{i}(a)\\
&L(1,\eta_{v_i})
(1+r_i-v_{i}(a)),\quad&& r_i>v_{i}(a)
\end{aligned}
\right..
\ena
This together with Lemma \ref{lemma-s-ri-ti-a} gives
\bna
\sum_{r_1,t_1\geq 0}\cdots\sum_{r_m,t_m\geq 0}
\sum_{\xi_y\in \mathcal S_{(r_i),(t_i)}(a)}\prod_{i=1}^m
|I(\xi_{y},f_{v_i}')|
\ll_{E,F}\frac{|N_{F/\Q}(a)|}{|\mk N_1\mk N_2|}
\prod_{i=1}^m
q_{v_{i}}^{-n_i(\Omega_{v_i})-\frac{v_{i}(a)}{2}}
\ena
and thus
\bna
I_{\mathrm{reg}}(f')\ll_{E,F,\mathbf{k},\epsilon}
\frac{|N_{F/\Q}(a)|^{1+\epsilon}}{|\mk N_1\mk N_2|}
\prod_{i=1}^m
q_{v_i}^{-n(\Omega_{v_i})-\frac{v_{i}(a)}{2}}.
\ena
Note that $0\neq a  \in \mk d_{E/F}\mk c(\Omega)\mk N_2^{-2}$ and $a$ satisfies (\ref{formula-condition-satisfied-by-a}).
We have $v_{i}(a)\geq  v_{i}(\mk c(\Omega_{v_i}))$ for $1\leq i\leq m$ and thus
\bna
I_{\mathrm{reg}}(f')
&\ll_{E,F,{\bf k},\epsilon}&
\frac{|N_{F/\Q}(a)|^{1+\epsilon}}{\left|\mk N_1\mk N_2\right|}
\prod_{i=1}^m
q_i^{-2n(\Omega_i)}\\
&\ll_{E,F,{\bf k},\epsilon}&\frac{c'(\Omega)^\epsilon}{|\mk N_1\mk N_2|}.
\ena
This proves (\ref{formula-bound-regular-orbital-integrals}).

\bigskip
\noindent{\bf Acknowledgments}
\bigskip

This work was completed when the author visited University of Minnestoa
in  2016-2017.
The author would like to express his thanks to Professor Dihua Jiang
for suggesting this question and
constant encouragement.
The author is also grateful to Bin Xu, Lei Zhang and Yongqiang Zhao for valuable discussions.
The author is supported in part by the Natural Science Foundation
of Shandong Province (Grant No. ZR2014AQ002),
Innovative Research Team in University (Grant No. IRT16R43)
and  China Scholarship Council.

\end{document}